\chardef\@x10\chardef\@xv60
\def\tcitime{
\def\@time{%
  \@minute\time\@hour\@minute\divide\@hour\@xv
  \ifnum\@hour<\@x 0\fi\the\@hour:%
  \multiply\@hour\@xv\advance\@minute-\@hour
  \ifnum\@minute<\@x 0\fi\the\@minute
  }}%
\def\QCTOpt[#1]#2{%
  \def\QCTOptB{#1}
  \def\QCTOptA{#2}
}
\def\QCTNOpt#1{%
  \def\QCTOptA{#1}
  \let\QCTOptB\empty
}
\def\Qct{%
  \@ifnextchar[{%
    \QCTOpt}{\QCTNOpt}
}
\def\QCBOpt[#1]#2{%
  \def\QCBOptB{#1}
  \def\QCBOptA{#2}
}
\def\QCBNOpt#1{%
  \def\QCBOptA{#1}
  \let\QCBOptB\empty
}
\def\Qcb{%
  \@ifnextchar[{%
    \QCBOpt}{\QCBNOpt}
}
\def\PrepCapArgs{%
  \ifx\QCBOptA\empty
    \ifx\QCTOptA\empty
      {}%
    \else
      \ifx\QCTOptB\empty
        {\QCTOptA}%
      \else
        [\QCTOptB]{\QCTOptA}%
      \fi
    \fi
  \else
    \ifx\QCBOptA\empty
      {}%
    \else
      \ifx\QCBOptB\empty
        {\QCBOptA}%
      \else
        [\QCBOptB]{\QCBOptA}%
      \fi
    \fi
  \fi
}
\def\GRAPHICSPS#1{%
 \ifcase\GRAPHICSTYPE
   \special{ps: #1}%
 \or
   \special{language "PS", include "#1"}%
 \fi
}%
\def\graffile#1#2#3#4{%
    \leavevmode
    \raise -#4 \BOXTHEFRAME{%
        \hbox to #2{\raise #3\hbox to #2{\null #1\hfil}}}%
}%
\def\draftbox#1#2#3#4{%
 \leavevmode\raise -#4 \hbox{%
  \frame{\rlap{\protect\tiny #1}\hbox to #2%
   {\vrule height#3 width\z@ depth\z@\hfil}%
  }%
 }%
}%
\newif\ifwasdraft
\def\GRAPHIC#1#2#3#4#5{%
 \ifnum\draft=\@ne\draftbox{#2}{#3}{#4}{#5}%
  \else\graffile{#1}{#3}{#4}{#5}%
  \fi
 }%
\def\addtoLaTeXparams#1{%
    \edef\LaTeXparams{\LaTeXparams #1}}%
\newif\ifBoxFrame \BoxFramefalse
\newif\ifOverFrame \OverFramefalse
\newif\ifUnderFrame \UnderFramefalse
\def\BOXTHEFRAME#1{%
   \hbox{%
      \ifBoxFrame
         \frame{#1}%
      \else
         {#1}%
      \fi
   }%
}
\def\doFRAMEparams#1{\BoxFramefalse\OverFramefalse\UnderFramefalse\readFRAMEparams#1\end}%
\def\readFRAMEparams#1{%
 \ifx#1\end%
  \let\next=\relax
  \else
  \ifx#1i\dispkind=\z@\fi
  \ifx#1d\dispkind=\@ne\fi
  \ifx#1f\dispkind=\tw@\fi
  \ifx#1t\addtoLaTeXparams{t}\fi
  \ifx#1b\addtoLaTeXparams{b}\fi
  \ifx#1p\addtoLaTeXparams{p}\fi
  \ifx#1h\addtoLaTeXparams{h}\fi
  \ifx#1X\BoxFrametrue\fi
  \ifx#1O\OverFrametrue\fi
  \ifx#1U\UnderFrametrue\fi
  \ifx#1w
    \ifnum\draft=1\wasdrafttrue\else\wasdraftfalse\fi
    \draft=\@ne
  \fi
  \let\next=\readFRAMEparams
  \fi
 \next
 }%
\def\IFRAME#1#2#3#4#5#6{%
      \bgroup
      \let\QCTOptA\empty
      \let\QCTOptB\empty
      \let\QCBOptA\empty
      \let\QCBOptB\empty
      #6%
      \parindent=0pt%
      \leftskip=0pt
      \rightskip=0pt
      \setbox0 = \hbox{\QCBOptA}%
      \@tempdima = #1\relax
      \ifOverFrame
          \typeout{This is not implemented yet}%
          \show\HELP
      \else
         \ifdim\wd0>\@tempdima
            \advance\@tempdima by \@tempdima
            \ifdim\wd0 >\@tempdima
               \textwidth=\@tempdima
               \setbox1 =\vbox{%
                  \noindent\hbox to \@tempdima{\hfill\GRAPHIC{#5}{#4}{#1}{#2}{#3}\hfill}\\%
                  \noindent\hbox to \@tempdima{\parbox[b]{\@tempdima}{\QCBOptA}}%
               }%
               \wd1=\@tempdima
            \else
               \textwidth=\wd0
               \setbox1 =\vbox{%
                 \noindent\hbox to \wd0{\hfill\GRAPHIC{#5}{#4}{#1}{#2}{#3}\hfill}\\%
                 \noindent\hbox{\QCBOptA}%
               }%
               \wd1=\wd0
            \fi
         \else
            \ifdim\wd0>0pt
              \hsize=\@tempdima
              \setbox1 =\vbox{%
                \unskip\GRAPHIC{#5}{#4}{#1}{#2}{0pt}%
                \break
                \unskip\hbox to \@tempdima{\hfill \QCBOptA\hfill}%
              }%
              \wd1=\@tempdima
           \else
              \hsize=\@tempdima
              \setbox1 =\vbox{%
                \unskip\GRAPHIC{#5}{#4}{#1}{#2}{0pt}%
              }%
              \wd1=\@tempdima
           \fi
         \fi
         \@tempdimb=\ht1
         \advance\@tempdimb by \dp1
         \advance\@tempdimb by -#2%
         \advance\@tempdimb by #3%
         \leavevmode
         \raise -\@tempdimb \hbox{\box1}%
      \fi
      \egroup%
}%
\def\DFRAME#1#2#3#4#5{%
 \begin{center}
     \let\QCTOptA\empty
     \let\QCTOptB\empty
     \let\QCBOptA\empty
     \let\QCBOptB\empty
     \ifOverFrame 
        #5\QCTOptA\par
     \fi
     \GRAPHIC{#4}{#3}{#1}{#2}{\z@}
     \ifUnderFrame 
        \nobreak\par #5\QCBOptA
     \fi
 \end{center}%
 }%
\def\FFRAME#1#2#3#4#5#6#7{%
 \begin{figure}[#1]%
  \let\QCTOptA\empty
  \let\QCTOptB\empty
  \let\QCBOptA\empty
  \let\QCBOptB\empty
  \ifOverFrame
    #4
    \ifx\QCTOptA\empty
    \else
      \ifx\QCTOptB\empty
        \caption{\QCTOptA}%
      \else
        \caption[\QCTOptB]{\QCTOptA}%
      \fi
    \fi
    \ifUnderFrame\else
      \label{#5}%
    \fi
  \else
    \UnderFrametrue%
  \fi
  \begin{center}\GRAPHIC{#7}{#6}{#2}{#3}{\z@}\end{center}%
  \ifUnderFrame
    #4
    \ifx\QCBOptA\empty
      \caption{}%
    \else
      \ifx\QCBOptB\empty
        \caption{\QCBOptA}%
      \else
        \caption[\QCBOptB]{\QCBOptA}%
      \fi
    \fi
    \label{#5}%
  \fi
  \end{figure}%
 }%
\def\makeactives{
  \catcode`\"=\active
  \catcode`\;=\active
  \catcode`\:=\active
  \catcode`\'=\active
  \catcode`\~=\active
}
   \gdef\activesoff{%
      \def"{\string"}
      \def;{\string;}
      \def:{\string:}
      \def'{\string'}
      \def~{\string~}
    }
\def\FRAME#1#2#3#4#5#6#7#8{%
 \bgroup
 \@ifundefined{bbl@deactivate}{}{\activesoff}
 \ifnum\draft=\@ne
   \wasdrafttrue
 \else
   \wasdraftfalse%
 \fi
 \def\LaTeXparams{}%
 \dispkind=\z@
 \def\LaTeXparams{}%
 \doFRAMEparams{#1}%
 \ifnum\dispkind=\z@\IFRAME{#2}{#3}{#4}{#7}{#8}{#5}\else
  \ifnum\dispkind=\@ne\DFRAME{#2}{#3}{#7}{#8}{#5}\else
   \ifnum\dispkind=\tw@
    \edef\@tempa{\noexpand\FFRAME{\LaTeXparams}}%
    \@tempa{#2}{#3}{#5}{#6}{#7}{#8}%
    \fi
   \fi
  \fi
  \ifwasdraft\draft=1\else\draft=0\fi{}%
  \egroup
 }%
\def\TEXUX#1{"texux"}
\def\limfunc#1{\mathop{\rm #1}}%
\long\def\QQQ#1#2{%
     \long\expandafter\def\csname#1\endcsname{#2}}%
\long\def\QQA#1#2{}%
\def\QTR#1#2{{\csname#1\endcsname #2}}
\def\EXPAND#1[#2]#3{}%
\def\NOEXPAND#1[#2]#3{}%
\def\LaTeXparent#1{}%
\def\ChildStyles#1{}%
\def\ChildDefaults#1{}%
\def\QTagDef#1#2#3{}%
\def\QQfnmark#1{\footnotemark}
\def\makeatletter\input gnuindex.sty\makeatother\makeindex{\makeatletter\input gnuindex.sty\makeatother\makeindex}%
\def\initial#1{\bigbreak{\raggedright\large\bf #1}\kern 2\p@\penalty3000}}%
 \def\abstract{%
  \if@twocolumn
   \section*{Abstract (Not appropriate in this style!)}%
   \else \small 
   \begin{center}{\bf Abstract\vspace{-.5em}\vspace{\z@}}\end{center}%
   \quotation 
   \fi
  }%
   \def\registered{\relax\ifmmode{}\r@gistered
                    \else$\m@th\r@gistered$\fi}%
 \def\r@gistered{^{\ooalign
  {\hfil\raise.07ex\hbox{$\scriptstyle\rm\text{R}$}\hfil\crcr
  \mathhexbox20D}}}}{}%
\newdimen\theight
\def\Column{%
 \vadjust{\setbox\z@=\hbox{\scriptsize\quad\quad tcol}%
  \theight=\ht\z@\advance\theight by \dp\z@\advance\theight by \lineskip
  \kern -\theight \vbox to \theight{%
   \rightline{\rlap{\box\z@}}%
   \vss
   }%
  }%
 }%
\def\qed{%
 \ifhmode\unskip\nobreak\fi\ifmmode\ifinner\else\hskip5\p@\fi\fi
 \hbox{\hskip5\p@\vrule width4\p@ height6\p@ depth1.5\p@\hskip\p@}%
 }%
\def\miss{\hbox{\vrule height2\p@ width 2\p@ depth\z@}}%
\def\tcol#1{{\baselineskip=6\p@ \vcenter{#1}} \Column}  %
\def\newfmtname{LaTeX2e}
\def\chkcompat{%
   \if@compatibility
   \else
     \usepackage{latexsym}
   \fi
}
  \DeclareOldFontCommand{\rm}{\normalfont\rmfamily}{\mathrm}
  \DeclareOldFontCommand{\sf}{\normalfont\sffamily}{\mathsf}
  \DeclareOldFontCommand{\tt}{\normalfont\ttfamily}{\mathtt}
  \DeclareOldFontCommand{\bf}{\normalfont\bfseries}{\mathbf}
  \DeclareOldFontCommand{\it}{\normalfont\itshape}{\mathit}
  \DeclareOldFontCommand{\sl}{\normalfont\slshape}{\@nomath\sl}
  \DeclareOldFontCommand{\sc}{\normalfont\scshape}{\@nomath\sc}
\def\alpha{{\Greekmath 010B}}%
\def\beta{{\Greekmath 010C}}%
\def\gamma{{\Greekmath 010D}}%
\def\delta{{\Greekmath 010E}}%
\def\epsilon{{\Greekmath 010F}}%
\def\zeta{{\Greekmath 0110}}%
\def\eta{{\Greekmath 0111}}%
\def\theta{{\Greekmath 0112}}%
\def\iota{{\Greekmath 0113}}%
\def\kappa{{\Greekmath 0114}}%
\def\lambda{{\Greekmath 0115}}%
\def\mu{{\Greekmath 0116}}%
\def\nu{{\Greekmath 0117}}%
\def\xi{{\Greekmath 0118}}%
\def\pi{{\Greekmath 0119}}%
\def\rho{{\Greekmath 011A}}%
\def\sigma{{\Greekmath 011B}}%
\def\tau{{\Greekmath 011C}}%
\def\upsilon{{\Greekmath 011D}}%
\def\phi{{\Greekmath 011E}}%
\def\chi{{\Greekmath 011F}}%
\def\psi{{\Greekmath 0120}}%
\def\omega{{\Greekmath 0121}}%
\def\varepsilon{{\Greekmath 0122}}%
\def\vartheta{{\Greekmath 0123}}%
\def\varpi{{\Greekmath 0124}}%
\def\varrho{{\Greekmath 0125}}%
\def\varsigma{{\Greekmath 0126}}%
\def\varphi{{\Greekmath 0127}}%
\def\nabla{{\Greekmath 0272}}
\def\FindBoldGroup{%
   {\setbox0=\hbox{$\mathbf{x\global\edef\theboldgroup{\the\mathgroup}}$}}%
}
\def\Greekmath#1#2#3#4{%
    \if@compatibility
        \ifnum\mathgroup=\symbold
           \mathchoice{\mbox{\boldmath$\displaystyle\mathchar"#1#2#3#4$}}%
                      {\mbox{\boldmath$\textstyle\mathchar"#1#2#3#4$}}%
                      {\mbox{\boldmath$\scriptstyle\mathchar"#1#2#3#4$}}%
                      {\mbox{\boldmath$\scriptscriptstyle\mathchar"#1#2#3#4$}}%
        \else
           \mathchar"#1#2#3#4%
        \fi 
    \else 
        \FindBoldGroup
        \ifnum\mathgroup=\theboldgroup 
           \mathchoice{\mbox{\boldmath$\displaystyle\mathchar"#1#2#3#4$}}%
                      {\mbox{\boldmath$\textstyle\mathchar"#1#2#3#4$}}%
                      {\mbox{\boldmath$\scriptstyle\mathchar"#1#2#3#4$}}%
                      {\mbox{\boldmath$\scriptscriptstyle\mathchar"#1#2#3#4$}}%
        \else
           \mathchar"#1#2#3#4%
        \fi     	    
	  \fi}
\newif\ifGreekBold  \GreekBoldfalse
\let\SAVEPBF=\pbf
\def\pbf{\GreekBoldtrue\SAVEPBF}%
  \newcounter{equationnumber}  
  \def\mathletters{%
     \addtocounter{equation}{1}
     \edef\@currentlabel{\theequation}%
     \setcounter{equationnumber}{\c@equation}
     \setcounter{equation}{0}%
     \edef\theequation{\@currentlabel\noexpand\alph{equation}}%
  }
    \def\BibTeX{{\rm B\kern-.05em{\sc i\kern-.025em b}\kern-.08em
                 T\kern-.1667em\lower.7ex\hbox{E}\kern-.125emX}}}{}%
\def\AmS{{\protect\usefont{OMS}{cmsy}{m}{n}%
                A\kern-.1667em\lower.5ex\hbox{M}\kern-.125emS}}}{}%
\let\DOTSI\relax
\def\eat@#1{}%
\def\RIfM@{\relax\ifmmode}%
\def\FN@{\futurelet\next}%
\def\iint{\DOTSI\intno@\tw@\FN@\ints@}%
\def\iiint{\DOTSI\intno@\thr@@\FN@\ints@}%
\def\iiiint{\DOTSI\intno@4 \FN@\ints@}%
\def\idotsint{\DOTSI\intno@\z@\FN@\ints@}%
\def\ints@{\findlimits@\ints@@}%
\newif\iflimtoken@
\newif\iflimits@
\def\findlimits@{\limtoken@true\ifx\next\limits\limits@true
 \else\ifx\next\nolimits\limits@false\else
 \limtoken@false\ifx\ilimits@\nolimits\limits@false\else
 \ifinner\limits@false\else\limits@true\fi\fi\fi\fi}%
\def\multint@{\int\ifnum\intno@=\z@\intdots@                          
 \else\intkern@\fi                                                    
 \ifnum\intno@>\tw@\int\intkern@\fi                                   
 \ifnum\intno@>\thr@@\int\intkern@\fi                                 
 \int}
\def\multintlimits@{\intop\ifnum\intno@=\z@\intdots@\else\intkern@\fi
 \ifnum\intno@>\tw@\intop\intkern@\fi
 \ifnum\intno@>\thr@@\intop\intkern@\fi\intop}%
\def\intic@{%
    \mathchoice{\hskip.5em}{\hskip.4em}{\hskip.4em}{\hskip.4em}}%
\def\negintic@{\mathchoice
 {\hskip-.5em}{\hskip-.4em}{\hskip-.4em}{\hskip-.4em}}%
\def\ints@@{\iflimtoken@                                              
 \def\ints@@@{\iflimits@\negintic@
   \mathop{\intic@\multintlimits@}\limits                             
  \else\multint@\nolimits\fi                                          
  \eat@}
 \else                                                                
 \def\ints@@@{\iflimits@\negintic@
  \mathop{\intic@\multintlimits@}\limits\else
  \multint@\nolimits\fi}\fi\ints@@@}%
\def\intkern@{\mathchoice{\!\!\!}{\!\!}{\!\!}{\!\!}}%
\def\plaincdots@{\mathinner{\cdotp\cdotp\cdotp}}%
\def\intdots@{\mathchoice{\plaincdots@}%
 {{\cdotp}\mkern1.5mu{\cdotp}\mkern1.5mu{\cdotp}}%
 {{\cdotp}\mkern1mu{\cdotp}\mkern1mu{\cdotp}}%
 {{\cdotp}\mkern1mu{\cdotp}\mkern1mu{\cdotp}}}%
\def\RIfM@{\relax\protect\ifmmode}
\def\text{\RIfM@\expandafter\text@\else\expandafter\mbox\fi}
\let\nfss@text\text
\def\text@#1{\mathchoice
   {\textdef@\displaystyle\f@size{#1}}%
   {\textdef@\textstyle\tf@size{\firstchoice@false #1}}%
   {\textdef@\textstyle\sf@size{\firstchoice@false #1}}%
   {\textdef@\textstyle \ssf@size{\firstchoice@false #1}}%
   \glb@settings}
\def\textdef@#1#2#3{\hbox{{%
                    \everymath{#1}%
                    \let\f@size#2\selectfont
                    #3}}}
\newif\iffirstchoice@
\def\Let@{\relax\iffalse{\fi\let\\=\cr\iffalse}\fi}%
\def\vspace@{\def\vspace##1{\crcr\noalign{\vskip##1\relax}}}%
\def\multilimits@{\bgroup\vspace@\Let@
 \baselineskip\fontdimen10 \scriptfont\tw@
 \advance\baselineskip\fontdimen12 \scriptfont\tw@
 \lineskip\thr@@\fontdimen8 \scriptfont\thr@@
 \lineskiplimit\lineskip
 \vbox\bgroup\ialign\bgroup\hfil$\m@th\scriptstyle{##}$\hfil\crcr}%
\def\Sb{_\multilimits@}%
\def\endSb{\crcr\egroup\egroup\egroup}%
\def\Sp{^\multilimits@}%
\newdimen\ex@
\def\rightarrowfill@#1{$#1\m@th\mathord-\mkern-6mu\cleaders
 \hbox{$#1\mkern-2mu\mathord-\mkern-2mu$}\hfill
 \mkern-6mu\mathord\rightarrow$}%
\def\leftarrowfill@#1{$#1\m@th\mathord\leftarrow\mkern-6mu\cleaders
 \hbox{$#1\mkern-2mu\mathord-\mkern-2mu$}\hfill\mkern-6mu\mathord-$}%
\def\leftrightarrowfill@#1{$#1\m@th\mathord\leftarrow
\mkern-6mu\cleaders
 \hbox{$#1\mkern-2mu\mathord-\mkern-2mu$}\hfill
 \mkern-6mu\mathord\rightarrow$}%
\def\overrightarrow{\mathpalette\overrightarrow@}%
\def\overrightarrow@#1#2{\vbox{\ialign{##\crcr\rightarrowfill@#1\crcr
 \noalign{\kern-\ex@\nointerlineskip}$\m@th\hfil#1#2\hfil$\crcr}}}%
\def\overleftarrow{\mathpalette\overleftarrow@}%
\def\overleftarrow@#1#2{\vbox{\ialign{##\crcr\leftarrowfill@#1\crcr
 \noalign{\kern-\ex@\nointerlineskip}$\m@th\hfil#1#2\hfil$\crcr}}}%
\def\overleftrightarrow{\mathpalette\overleftrightarrow@}%
\def\overleftrightarrow@#1#2{\vbox{\ialign{##\crcr
   \leftrightarrowfill@#1\crcr
 \noalign{\kern-\ex@\nointerlineskip}$\m@th\hfil#1#2\hfil$\crcr}}}%
\def\underrightarrow{\mathpalette\underrightarrow@}%
\def\underrightarrow@#1#2{\vtop{\ialign{##\crcr$\m@th\hfil#1#2\hfil
  $\crcr\noalign{\nointerlineskip}\rightarrowfill@#1\crcr}}}%
\def\underleftarrow{\mathpalette\underleftarrow@}%
\def\underleftarrow@#1#2{\vtop{\ialign{##\crcr$\m@th\hfil#1#2\hfil
  $\crcr\noalign{\nointerlineskip}\leftarrowfill@#1\crcr}}}%
\def\underleftrightarrow{\mathpalette\underleftrightarrow@}%
\def\underleftrightarrow@#1#2{\vtop{\ialign{##\crcr$\m@th
  \hfil#1#2\hfil$\crcr
 \noalign{\nointerlineskip}\leftrightarrowfill@#1\crcr}}}%
\def\qopnamewl@#1{\mathop{\operator@font#1}\nlimits@}
\let\nlimits@\displaylimits
\def\setboxz@h{\setbox\z@\hbox}
\def\varlim@#1#2{\mathop{\vtop{\ialign{##\crcr
 \hfil$#1\m@th\operator@font lim$\hfil\crcr
 \noalign{\nointerlineskip}#2#1\crcr
 \noalign{\nointerlineskip\kern-\ex@}\crcr}}}}
 \def\rightarrowfill@#1{\m@th\setboxz@h{$#1-$}\ht\z@\z@
  $#1\copy\z@\mkern-6mu\cleaders
  \hbox{$#1\mkern-2mu\box\z@\mkern-2mu$}\hfill
  \mkern-6mu\mathord\rightarrow$}
\def\leftarrowfill@#1{\m@th\setboxz@h{$#1-$}\ht\z@\z@
  $#1\mathord\leftarrow\mkern-6mu\cleaders
  \hbox{$#1\mkern-2mu\copy\z@\mkern-2mu$}\hfill
  \mkern-6mu\box\z@$}
\def\projlim{\qopnamewl@{proj\,lim}}
\def\injlim{\qopnamewl@{inj\,lim}}
\def\varinjlim{\mathpalette\varlim@\rightarrowfill@}
\def\varprojlim{\mathpalette\varlim@\leftarrowfill@}
\def\varliminf{\mathpalette\varliminf@{}}
\def\varliminf@#1{\mathop{\underline{\vrule\@depth.2\ex@\@width\z@
   \hbox{$#1\m@th\operator@font lim$}}}}
\def\varlimsup{\mathpalette\varlimsup@{}}
\def\varlimsup@#1{\mathop{\overline
  {\hbox{$#1\m@th\operator@font lim$}}}}
\def\align{\@verbatim \frenchspacing\@vobeyspaces \@alignverbatim
You are using the "align" environment in a style in which it is not defined.}
\let\csname endalign*\endcsname =\endtrivlist
\def\alignat{\@verbatim \frenchspacing\@vobeyspaces \@alignatverbatim
You are using the "alignat" environment in a style in which it is not defined.}
\let\csname endalignat*\endcsname =\endtrivlist
\def\xalignat{\@verbatim \frenchspacing\@vobeyspaces \@xalignatverbatim
You are using the "xalignat" environment in a style in which it is not defined.}
\let\csname endxalignat*\endcsname =\endtrivlist
\def\gather{\@verbatim \frenchspacing\@vobeyspaces \@gatherverbatim
You are using the "gather" environment in a style in which it is not defined.}
\let\csname endgather*\endcsname =\endtrivlist
\def\multiline{\@verbatim \frenchspacing\@vobeyspaces \@multilineverbatim
You are using the "multiline" environment in a style in which it is not defined.}
\let\csname endmultiline*\endcsname =\endtrivlist
\def\arrax{\@verbatim \frenchspacing\@vobeyspaces \@arraxverbatim
You are using a type of "array" construct that is only allowed in AmS-LaTeX.}
\def\tabulax{\@verbatim \frenchspacing\@vobeyspaces \@tabulaxverbatim
You are using a type of "tabular" construct that is only allowed in AmS-LaTeX.}
\let\csname endarrax*\endcsname =\endtrivlist
\let\csname endtabulax*\endcsname =\endtrivlist
\def\@@eqncr{\let\@tempa\relax
    \ifcase\@eqcnt \def\@tempa{& & &}\or \def\@tempa{& &}%
      \else \def\@tempa{&}\fi
     \@tempa
     \if@eqnsw
        \iftag@
           \@taggnum
        \else
           \@eqnnum\stepcounter{equation}%
        \fi
     \fi
     \global\tag@false
     \global\@eqnswtrue
     \global\@eqcnt\z@\cr}
 \def\endequation{%
     \ifmmode\ifinner 
      \iftag@
        \addtocounter{equation}{-1} 
        $\hfil
           \displaywidth\linewidth\@taggnum\egroup \endtrivlist
        \global\tag@false
        \global\@ignoretrue   
      \else
        $\hfil
           \displaywidth\linewidth\@eqnnum\egroup \endtrivlist
        \global\tag@false
        \global\@ignoretrue 
      \fi
     \else   
      \iftag@
        \addtocounter{equation}{-1} 
        \eqno \hbox{\@taggnum}
        \global\tag@false%
        $$\global\@ignoretrue
      \else
        \eqno \hbox{\@eqnnum}
        $$\global\@ignoretrue
      \fi
     \fi\fi
 } 
 \newif\iftag@ \tag@false
 \def\tag{\@ifnextchar*{\@tagstar}{\@tag}}
 \def\@tag#1{%
     \global\tag@true
     \global\def\@taggnum{(#1)}}
 \def\@tagstar*#1{%
     \global\tag@true
     \global\def\@taggnum{#1}%
}
\theoremstyle{definition}
\theoremstyle{remark}
\numberwithin{equation}{section}
\begin{document}
\title[Inequalities for infimal convolution]{Integral inequalities for infimal convolution and Hamilton-Jacobi equations}
\author{Patrick J. Rabier}
\address{Department of mathematics, University of Pittsburgh, Pittsburgh, PA 15260}
\email{rabier@imap.pitt.edu}
\subjclass{26D15, 46E30, 35F25, 49L25}
\keywords{Brunn-Minkowski inequality, enclosing ball, Hamilton-Jacobi equations,
infimal convolution, Orlicz space, rearrangement. }
\dedicatory{This paper is dedicated to the memory of Jean Jacques Moreau}
\maketitle

\begin{abstract}
Let $f,g:\Bbb{R}^{N}\rightarrow (-\infty ,\infty ]$ be Borel measurable,
bounded below and such that $\inf f+\inf g\geq 0.$ We prove that with $%
m_{f,g}:=(\inf f-\inf g)/2,$ the inequality $||(f-m_{f,g})^{-1}||_{\phi
}+||(g+m_{f,g})^{-1}||_{\phi }\leq 4||(f\Box g)^{-1}||_{\phi }$ holds in
every Orlicz space $L_{\phi },$ where $f\Box g$ denotes the infimal
convolution of $f$ and $g$ and where $||\cdot ||_{\phi }$ is the Luxemburg
norm (i.e., the $L^{p}$ norm when $L_{\phi }=L^{p}$).

Although no genuine reverse inequality can hold in any generality, we also
prove that such reverse inequalities do exist in the form $||(f\Box
g)^{-1}||_{\phi }\leq 2^{N-1}(||(\check{f}-m_{f,g})^{-1}||_{\phi }+||(\check{
g}+m_{f,g})^{-1}||_{\phi }),$ where $\check{f}$ and $\check{g}$ are suitable
transforms of $f$ and $g$ introduced in the paper and reminiscent of, yet
very different from, nondecreasing rearrangement.

Similar inequalities are proved for other extremal operations and
applications are given to the long-time behavior of the solutions of the
Hamilton-Jacobi and related equations.
\end{abstract}

\section{Introduction\label{intro}}

If $f,g:\Bbb{R}^{N}\rightarrow (-\infty ,\infty ],$ the infimal convolution $%
f\Box g:\Bbb{R}^{N}\rightarrow [-\infty ,\infty ],$ first introduced by
Fenchel \cite{Fe53} and Moreau \cite{Mo63}, \cite{Mo63b}, \cite{Mo70}, is
defined by the formula 
\begin{equation*}
(f\Box g)(x):=\inf_{y\in \Bbb{R}^{N}}(f(x-y)+g(y)).
\end{equation*}
Since then, this operation and its extension to general vector spaces have
found an ever growing variety of applications, including convex functions 
\cite{HiLe96}, \cite{R070}, extension of Lipschitz functions \cite{Hi80},
solutions of the Hamilton-Jacobi equations \cite{AlBaIs99}, \cite{Li82}, 
\cite{St96} and much more (even a proof of the Hahn-Banach theorem \cite
{GlSa10}). In fact, there are by now several thousands publications
using infimal convolution in areas as diverse as image
processing, economics and finance, information theory, probabilities and
statistics, etc. For a glimpse into some of these problems, see the excellent 
recent survey by Lucet  \cite{Lu10}.

In this paper, we investigate the mathematical properties of infimal
convolution in a new direction, by exploring the existence of integral
inequalities involving $f,g$ and $f\Box g.$ The remark that $f\Box g=0$
whenever $f\geq 0$ and $g\geq 0$ are integrable could cast serious doubts on
the value of this program, but they are quickly dispelled by the rebuttal
that no similar triviality arises from the integrability of $f^{-1}$ and $%
g^{-1}.$ Here and everywhere else, $f^{-1}:=1/f,$ $g^{-1}:=1/g,$ etc. This
notation will not be used to denote any set-theoretic inverse.

Omitting technicalities to which we shall return shortly, the first batch of
inequalities will relate the (Luxemburg) norm $||(f\Box g)^{-1}||_{\phi }$
in any Orlicz space $L_{\phi },$ to the norms $||(f-z)^{-1}||_{\phi }$ and $%
||(g+z)^{-1}||_{\phi }$ for a suitable constant $z$ independent of $\phi ,$
to be defined in due time. The only restrictions are that $f$ and $g$ must
be Borel measurable, bounded below and that $f\Box g\geq 0.$ The proofs
depend crucially upon (a slightly weaker form of) the Brunn-Minkowski inequality.

The setting of Orlicz spaces instead of just the classical $L^{p}$ spaces
introduces only mild additional technicalities, is more natural in many
respects and, as we shall see in the examples of Section \ref{application},
is useful in some applications. It does not even require any knowledge of
Orlicz spaces beyond the definitions of Young functions and of the Luxemburg
norm, which will both be reviewed.

This being said, a simple special case asserts that if $f,g\geq 0$ are Borel
measurable and $\inf f=\inf g$ (see Theorem \ref{th7} for a full and much
more general statement) 
\begin{equation}
||f^{-1}||_{p}+||g^{-1}||_{p}\leq 4||(f\Box g)^{-1}||_{p},  \label{1}
\end{equation}
for every $1\leq p\leq \infty ,$ where $||\cdot ||_{p}$ is the norm of $%
L^{p}:=L^{p}(\Bbb{R}^{N}).$ The constant $4$ is best possible among all
constants independent of $p,$ as is readily seen when $f=g=1$ and $p=\infty
. $

The Borel measurability requirement has to do with the measurability of $%
f\Box g,$ without which (\ref{1}) cannot make sense. Curiously, we were
unable to find a discussion of the measurability properties of the infimal
convolution in the classical literature, but the evidence points to the fact
that $f$ and $g$ Lebesgue measurable does not suffice for the measurability
of $f\Box g.$ Indeed, as is well-known, the strict epigraph of $f\Box g$ is
the (vector, also called Minkowski) sum of the strict epigraphs of $f$ and $%
g $ and Sierpi\'{n}ski \cite{Si20} showed, almost a century ago, that the
sum of two Lebesgue measurable sets need not be Lebesgue measurable. In
contrast, the sum of two Borel sets is always Lebesgue measurable (but not
always a Borel set). See Section \ref{background} for further details.

A peculiar feature of (\ref{1}) and of more general similar inequalities is
that only the left-hand side is unchanged by modifications of $f$ and $g$ on
null sets, as long as Borel measurability and $\inf f=\inf g>-\infty $ are
preserved.

Most of the paper is actually devoted to perhaps more important -and
definitely more delicate- reverse inequalities which, in a simpler world,
would read 
\begin{equation}
||(f\Box g)^{-1}||_{p}\leq C(||f^{-1}||_{p}+||g^{-1}||_{p}),  \label{2}
\end{equation}
with $C>0$ independent of $f$ and $g$\ in some suitable class of nonnegative
functions. Unfortunately, the main obstacle to (\ref{2}) is that no remotely
general converse of the Brunn-Minkowski inequality holds in any form, even
for convex sets. Such a converse is actually trivially true for Euclidean
balls, but a direct application of this remark only yields (\ref{2}) for a
narrow subclass of radially symmetric functions.

To take advantage of the converse of the Brunn-Minkowski inequality for
balls in a much broader setting, we introduce a new function transform,
strongly reminiscent of, yet very different from, nonincreasing
rearrangement. The difference is that the upper level sets are rounded
before being rearranged, the rounding being performed by using the concept
of enclosing ball (see Section \ref{transforms}).

To each function $f:\Bbb{R}^{N}\rightarrow [-\infty ,\infty ]$ (no
measurability needed), the aforementioned transform associates a measurable
radially symmetric function $\hat{f},$ which in turn produces another
measurable radially symmetric function $\check{f}:=-(-f\hat{)}.$ In the
special case when $f,g\geq 0$ are Borel measurable and $\inf f=\inf g$ (see
Theorem \ref{th19} for a full and much more general statement), the reverse
inequality in $L^{p}$ reads (compare with (\ref{1}) under the same
assumptions) 
\begin{equation}
||(f\Box g)^{-1}||_{p}\leq 2^{N-1}(||\check{f}^{-1}||_{p}+||\check{g}
^{-1}||_{p}).  \label{3}
\end{equation}

Such an inequality breaks down completely if $\check{f}$ and $\check{g}$ are
replaced with $f$ and $g,$ respectively, even if both functions are radially
symmetric. For example, if $N=1,f(x)=x^{2}+1$ and $g(x)=x^{2}+1$ when $%
x\notin \Bbb{Q},g(x)=1$ if $x\in \Bbb{Q},$ then $f$ and $g$ are Borel
measurable and $\inf f=\inf g=1.$ But $(f\Box g)^{-1}=1/2$ is in no $L^{p}$
space with $p<\infty ,$ whereas $f^{-1}$ and $g^{-1}=f^{-1}$ a.e. are in all
of them. (In this example, it turns out that $\check{f}=f$ but $\check{g}=1.$
) This example also shows that, unlike in (\ref{1}), neither side of (\ref{3}%
) is independent of modifications of $f$ or $g$ on null sets that do not
affect Borel measurability or $\inf f=\inf g.$

When not trivial (i.e., $\check{f}=f$ a.e.), the explicit calculation of $%
\check{f}$ is generally not possible. Nevertheless, the inequality (\ref{3})
is useful because some simple and general conditions about $f$ and $g$
ensure the finiteness of the right-hand side (Lemma \ref{lm21}). There is
certainly more to be discovered in that regard.

\smallskip The proofs of the inequalities involve two other classical
extremal operations 
\begin{equation*}
(f\barwedge g)(x):=\sup_{y}\min \{f(x-y),g(y)\}\text{ and }(f\veebar
g)(x):=\inf_{y}\max \{f(x-y),g(y)\}.
\end{equation*}
Either of these operations fully determines the other, but both notations
will be useful. In general, $f\veebar g=-(-f)\barwedge (-g)$ and, for
nonnegative functions, $f\veebar g=(f^{-1}\barwedge g^{-1})^{-1}$ will be
important. We also prove inequalities similar to (\ref{1}) and (\ref{3}) for
the operations $\barwedge $ and $\veebar $ (both being often referred to as
``level sum'' operations in the literature). In fact, a good part of the
work will consist in proving integral inequalities for $\barwedge ,$ from
which those for $\Box $ and $\veebar $ will be derived.

In the last section, the inequalities are used to obtain $L^{p}$ (and other)
estimates for the inverses of solutions of the Hamilton-Jacobi equations and
variants thereof.

Throughout the paper,\ $\mu _{N}$ denotes the $N$-dimensional Lebesgue
measure and, without a qualifier, measurability always means Lebesgue
measurability.

\section{Background\label{background}}

The purpose of this short section is to review the basic properties of the
operations mentioned in the Introduction, to set the notation used in future
sections and to settle basic measurability issues.

Recall that if $X$ and $Y$ are subsets of $\Bbb{R}^{N},$ their sum $X+Y$ is
defined by 
\begin{equation*}
X+Y:=\left\{ 
\begin{array}{l}
\{x+y:x\in X,y\in Y\}\text{ if }X\neq \emptyset \text{ and }Y\neq \emptyset ,
\\ 
\emptyset \text{ if }X=\emptyset \text{ or }Y=\emptyset .
\end{array}
\right.
\end{equation*}
The following key lemma is well-known. The ``proof'' below merely makes the
connection with the deep property behind it.

\begin{lemma}
\label{lm1} If $X$ and $Y$ are Borel subsets of $\Bbb{R}^{N},$ their sum $%
X+Y $ is measurable\footnote{%
Even a Suslin set, but not necessarily a Borel set.}.
\end{lemma}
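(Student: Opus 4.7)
The plan is to identify $X+Y$ as the continuous image of a Borel set and then invoke the classical measurability theorem for analytic (Suslin) sets. Concretely, consider the addition map $s\colon \mathbb{R}^N\times\mathbb{R}^N\to\mathbb{R}^N$ defined by $s(x,y):=x+y$, which is continuous. If $X$ and $Y$ are Borel subsets of $\mathbb{R}^N$, then $X\times Y$ is a Borel subset of $\mathbb{R}^{2N}$, and $X+Y=s(X\times Y)$ by definition (the empty-set convention in the statement poses no problem, since $s(\emptyset)=\emptyset$).

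I would next recall that the continuous image of a Borel set in a Polish space is, by definition, an analytic (or Suslin) set. This matches the parenthetical claim in the footnote that $X+Y$ is even a Suslin set. The fact that such a set need not be Borel is standard (the Suslin operation does not preserve the Borel class), which accounts for the second half of the footnote.

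The last step is to quote Lusin's classical theorem that every analytic subset of $\mathbb{R}^N$ is Lebesgue measurable (in fact, it is measurable with respect to every complete Borel-generated measure, by an inner and outer approximation using the Suslin scheme). Applying this to $X+Y$ gives the conclusion.

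The only step that requires caution is the invocation of Lusin's theorem on the measurability of analytic sets, since this is where the depth of the result lies — everything else (the Borel-product structure, the continuity of $s$, the identity $X+Y=s(X\times Y)$) is elementary. That is presumably why the author describes the argument as merely making the connection with the deeper fact behind it, rather than writing out a genuine proof.
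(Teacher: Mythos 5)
Your proposal is correct and follows the same route as the paper: write $X+Y$ as the image of the Borel set $X\times Y$ under the continuous addition map, and invoke the classical result (Lusin's theorem, cited in the paper via Federer) that continuous images of Borel sets in Euclidean space are Lebesgue measurable.
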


\begin{proof}
In Euclidean space (any dimension), the continuous image of a Borel set is
Lebesgue measurable; see Federer \cite[p. 69]{Fe96}. Since $X\times Y$ is a
Borel subset of $\Bbb{R}^{N}\times \Bbb{R}^{N}$ and the addition is
continuous on $\Bbb{R}^{N},$ the result follows.
\end{proof}

Given two functions $f,g:\Bbb{R}^{N}\rightarrow [-\infty ,\infty ]$ and $\xi
\in \Bbb{R},$ call $F_{\xi }^{+}$ and $G_{\xi }^{+}$ the upper level sets 
\begin{equation}
F_{\xi }^{+}:=\{x\in \Bbb{R}^{N}:f(x)>\xi \},\qquad G_{\xi }^{+}:=\{x\in 
\Bbb{R}^{N}:g(x)>\xi \}  \label{4}
\end{equation}
and call $W_{\xi }^{+}$ the corresponding upper level set of $f\barwedge g:$%
\begin{equation}
W_{\xi }^{+}:=\{x\in \Bbb{R}^{N}:(f\barwedge g)(x)>\xi \}.  \label{5}
\end{equation}
It is a standard elementary property that 
\begin{equation}
W_{\xi }^{+}=F_{\xi }^{+}+G_{\xi }^{+}.\text{ }  \label{6}
\end{equation}
By Lemma \ref{lm1} and since $f\veebar g=-(-f)\barwedge (-g),$ it follows at
once from (\ref{6}) that:

\begin{lemma}
\label{lm2}If $f,g:\Bbb{R}^{N}\rightarrow [-\infty ,\infty ]$ are Borel
measurable, then $f\barwedge g$ and $f\veebar g$ are measurable.
\end{lemma}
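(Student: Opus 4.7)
The plan is to verify Lebesgue measurability of $f\barwedge g$ by checking each upper level set, and then deduce the corresponding statement for $f\veebar g$ via the duality $f\veebar g = -(-f)\barwedge(-g)$ already noted in the text.

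For the first part, I would take an arbitrary $\xi\in\mathbb{R}$ and exploit the identity (\ref{6}), namely $W_{\xi}^{+}=F_{\xi}^{+}+G_{\xi}^{+}$. Since $f$ and $g$ are Borel measurable, both $F_{\xi}^{+}$ and $G_{\xi}^{+}$ are Borel subsets of $\mathbb{R}^{N}$. Lemma \ref{lm1} therefore guarantees that their Minkowski sum $W_{\xi}^{+}$ is Lebesgue measurable. Since this holds for every $\xi\in\mathbb{R}$, the function $f\barwedge g$ is Lebesgue measurable. (The edge cases $\xi=\pm\infty$ cause no trouble: $W_{+\infty}^{+}=\emptyset$ and $W_{-\infty}^{+}=\mathbb{R}^N$, and the supremum over $\mathbb{R}$ of measurable sets detects measurability of the function.)

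For the second part, note that Borel measurability is preserved under negation, so $-f$ and $-g$ are Borel measurable. The first part applied to the pair $(-f,-g)$ shows that $(-f)\barwedge(-g)$ is Lebesgue measurable. Since negation preserves Lebesgue measurability of functions, the identity $f\veebar g = -((-f)\barwedge(-g))$ yields measurability of $f\veebar g$ as well.

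There is no real obstacle here: the entire weight of the argument is already carried by Lemma \ref{lm1} (which rests on the nontrivial fact that continuous images of Borel sets in Euclidean space are Lebesgue measurable) and by the elementary level-set identity (\ref{6}). The lemma is essentially a direct corollary of these two ingredients, which is why the authors signaled that it ``follows at once'' from (\ref{6}).
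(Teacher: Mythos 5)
Your argument is correct and follows the paper's route exactly: identify the upper level sets of $f\barwedge g$ as Minkowski sums via (\ref{6}), invoke Lemma \ref{lm1} for their Lebesgue measurability, and reduce $f\veebar g$ to the $\barwedge$ case through the identity $f\veebar g=-((-f)\barwedge(-g))$. You have merely spelled out the steps the paper compresses into one sentence.
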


If $f:\Bbb{R}^{N}\rightarrow [-\infty ,\infty ]$ we set 
\begin{equation}
M_{f}:=\sup f,\qquad m_{f}:=\inf f.  \label{7}
\end{equation}
The next relations are elementary, but important 
\begin{multline}
M_{f\barwedge g}=\min \{M_{f},M_{g}\},\qquad m_{f\barwedge g}\geq \min
\{m_{f},m_{g}\},  \label{8} \\
M_{f\veebar g}\leq \max \{M_{f},M_{g}\},\qquad m_{f\veebar g}=\max
\{m_{f},m_{g}\}.
\end{multline}

We now turn to infimal convolution. Given a function $f:\Bbb{R}%
^{N}\rightarrow (-\infty ,\infty ],$ we denote by $E_{f}:=\{(x,\xi )\in \Bbb{%
R}^{N}\times \Bbb{R}:$ $f(x)<\xi \}$ the strict epigraph of $f.$ It is also
a simple well-known property that if $g:\Bbb{R}^{N}\rightarrow (-\infty
,\infty ]$ is another function, then 
\begin{equation*}
E_{f\Box g}=E_{f}+E_{g}.
\end{equation*}
Since a function is Borel measurable (measurable) if and only if its strict
epigraph is a Borel set (measurable), it follows from Lemma \ref{lm1} that

\begin{lemma}
\label{lm3} If $f,g:\Bbb{R}^{N}\rightarrow (-\infty ,\infty ]$ are Borel
measurable, then $f\Box g$ is measurable.
\end{lemma}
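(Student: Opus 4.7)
The plan is to reduce the question to \lemref{lm1} via the strict-epigraph dictionary, exactly as the excerpt telegraphs. I would invoke the equivalence, already recorded in the paragraph preceding the statement, that a function $h:\Bbb{R}^{N}\rightarrow (-\infty,\infty]$ is Borel measurable (resp.\ Lebesgue measurable) if and only if its strict epigraph $E_{h}\subset \Bbb{R}^{N+1}$ is Borel (resp.\ Lebesgue measurable). With this in hand, proving measurability of $f\Box g$ becomes the task of showing that $E_{f\Box g}$ is Lebesgue measurable in $\Bbb{R}^{N+1}$.

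The only identity required is $E_{f\Box g}=E_{f}+E_{g}$, which the excerpt records just above the statement and which is a direct unpacking of the definition of infimal convolution (one inclusion is immediate from the triangle-style estimate $(f\Box g)(x_{1}+x_{2})\leq f(x_{1})+g(x_{2})$, the other from choosing a near-optimal splitting $x=(x-y)+y$ in the infimum). Since $f$ and $g$ are Borel measurable, $E_{f}$ and $E_{g}$ are Borel subsets of $\Bbb{R}^{N+1}$, so applying \lemref{lm1} in ambient dimension $N+1$ (that lemma is stated, and equally valid, in every Euclidean dimension) yields that $E_{f}+E_{g}$ is Lebesgue measurable. Transferring back through the epigraph dictionary delivers the Lebesgue measurability of $f\Box g$.

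There is essentially no obstacle here: the setup has been arranged to make this a verbatim analogue of the passage from \lemref{lm1} and (\ref{6}) to \lemref{lm2}, with the strict epigraph in $\Bbb{R}^{N+1}$ playing the role that the upper level sets $W_{\xi}^{+}$ played in $\Bbb{R}^{N}$. The one subtlety worth flagging, echoing the footnote attached to \lemref{lm1}, is that $f\Box g$ need not be Borel measurable even when $f$ and $g$ are, because the Minkowski sum of two Borel sets is Lebesgue but not in general Borel; this is consistent with the paper's later insistence on stating integral inequalities under a Borel hypothesis on $f,g$ but only a Lebesgue measurability conclusion on $f\Box g$.
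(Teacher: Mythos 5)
Your argument is exactly the paper's: pass to strict epigraphs, use the identity $E_{f\Box g}=E_{f}+E_{g}$ together with the fact that (Borel/Lebesgue) measurability of a function is equivalent to that of its strict epigraph, and then apply \lemref{lm1} in $\Bbb{R}^{N+1}$. The proposal is correct and takes essentially the same route as the paper.
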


For future use, we also note that if $z\in \Bbb{R},$%
\begin{equation}
f\Box g=(f-z)\Box (g+z).  \label{10}
\end{equation}

\section{First integral inequalities\label{inequalities}}

The Brunn-Minkowski inequality (see e.g. Gardner's survey \cite{Ga02})
asserts that if $X,Y$ are nonempty measurable subsets of $\Bbb{R}^{N}$ \emph{%
and} if $X+Y$ is measurable, then $\mu _{N}(X+Y)^{1/N}\geq \mu
_{N}(X)^{1/N}+\mu _{N}(Y)^{1/N}.$ Obviously, it fails if $X$ or $Y$ is empty
and the other has positive measure. We shall only need the less sharp form 
\begin{equation}
\mu _{N}(X+Y)\geq \mu _{N}(X)+\mu _{N}(Y),\text{ }  \label{11}
\end{equation}
if $X,Y$ and $X+Y$ are measurable and $X\neq \emptyset ,Y\neq \emptyset .$

\begin{lemma}
\label{lm4}If $f,g:\Bbb{R}^{N}\rightarrow [0,\infty ]$ are measurable and $%
M_{f}=M_{g}$ (possibly $\infty ;$ see (\ref{7})) and if $f\barwedge g$ is
measurable, then 
\begin{equation}
\int_{\Bbb{R}^{N}}f+\int_{\Bbb{R}^{N}}g\leq \int_{\Bbb{R}^{N}}f\barwedge g.
\label{12}
\end{equation}
\end{lemma}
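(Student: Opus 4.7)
The natural approach is the layer-cake (Cavalieri) representation combined with the weak Brunn--Minkowski inequality (\ref{11}). Because $f,g\geq 0$ are measurable and $f\barwedge g\geq 0$ is measurable, I would write
\begin{equation*}
\int_{\Bbb{R}^{N}}f+\int_{\Bbb{R}^{N}}g=\int_{0}^{\infty }\bigl(\mu _{N}(F_{\xi }^{+})+\mu _{N}(G_{\xi }^{+})\bigr)d\xi ,\qquad \int_{\Bbb{R}^{N}}f\barwedge g=\int_{0}^{\infty }\mu _{N}(W_{\xi }^{+})d\xi ,
\end{equation*}
and argue the pointwise-in-$\xi $ inequality $\mu _{N}(F_{\xi }^{+})+\mu _{N}(G_{\xi }^{+})\leq \mu _{N}(W_{\xi }^{+})$, then integrate.

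For the pointwise step, recall from (\ref{6}) that $W_{\xi }^{+}=F_{\xi }^{+}+G_{\xi }^{+}$. The measurability of $F_{\xi }^{+}$, $G_{\xi }^{+}$ and $W_{\xi }^{+}$ follows from that of $f$, $g$ and $f\barwedge g$, so the Brunn--Minkowski inequality (\ref{11}) is applicable \emph{provided both $F_{\xi }^{+}$ and $G_{\xi }^{+}$ are nonempty}. Here the hypothesis $M_{f}=M_{g}$ plays its only, but crucial, role: $F_{\xi }^{+}\neq \emptyset $ iff $\xi <M_{f}$ and $G_{\xi }^{+}\neq \emptyset $ iff $\xi <M_{g}$, so emptiness of one set forces emptiness of the other. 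Thus:
\begin{itemize}
\item For $\xi <M_{f}=M_{g}$, both sets are nonempty and (\ref{11}) gives $\mu _{N}(W_{\xi }^{+})\geq \mu _{N}(F_{\xi }^{+})+\mu _{N}(G_{\xi }^{+})$.
\item For $\xi \geq M_{f}=M_{g}$, all three sets are empty (by the convention on sums with an empty summand recalled in Section \ref{background}) and the inequality reads $0\geq 0$.
\end{itemize}
Integrating over $\xi \in [0,\infty )$ produces (\ref{12}).

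The main obstacle, if any, is conceptual rather than computational: one must notice that the equal-supremum assumption is exactly what synchronizes the nonemptiness of the two upper level sets, which is the missing hypothesis in the weak form (\ref{11}) of Brunn--Minkowski. Without $M_{f}=M_{g}$, one could have, say, $F_{\xi }^{+}=\emptyset $ while $G_{\xi }^{+}$ has positive measure, forcing $W_{\xi }^{+}=\emptyset $ and breaking the pointwise inequality. The case $M_{f}=M_{g}=\infty $ is handled uniformly, since then both upper level sets are nonempty for every $\xi \geq 0$. Measurability hypotheses are used only to make the layer-cake identities and (\ref{11}) meaningful; no further regularity is needed.
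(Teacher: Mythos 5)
Your proof is correct and follows essentially the same route as the paper: layer-cake representation plus the weak Brunn--Minkowski inequality (\ref{11}), with the hypothesis $M_{f}=M_{g}$ used precisely to guarantee simultaneous nonemptiness of the level sets. The only cosmetic difference is that you integrate over $[0,\infty)$ with the tail contributing $0=0$, whereas the paper truncates the integrals at $M$ first; the content is identical.
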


\begin{proof}
Set $M:=M_{f}=M_{g}\leq \infty .$ If $M=0,$ then $f=g=$ $f\barwedge g=0$ and
(\ref{12}) is trivial. In what follows, $M>0.$

Since $f\geq 0,$ it is well-known (see (\ref{4})) that $\int_{\Bbb{R}%
^{N}}f=\int_{0}^{\infty }\mu _{N}(F_{\xi }^{+})d\xi .$ By using $F_{\xi
}^{+}=\emptyset $ when $\xi \geq M,$ this reads $\int_{\Bbb{R}%
^{N}}f=\int_{0}^{M}\mu _{N}(F_{\xi }^{+})d\xi .$ Likewise, $\int_{\Bbb{R}%
^{N}}g=\int_{0}^{M}\mu _{N}(G_{\xi }^{+})d\xi ,$ so that $\int_{\Bbb{R}%
^{N}}f+\int_{\Bbb{R}^{N}}g=\int_{0}^{M}(\mu _{N}(F_{\xi }^{+})+\mu
_{N}(G_{\xi }^{+}))d\xi .$

By (\ref{5}) and (\ref{6}) and since $f\barwedge g$ is measurable by
hypothesis, $F_{\xi }^{+}+G_{\xi }^{+}=W_{\xi }^{+}$ is measurable for every 
$\xi .$ If $\xi <M,$ then $F_{\xi }^{+}\neq \emptyset $ and $G_{\xi
}^{+}\neq \emptyset $ by definition of $M$ and so, by (\ref{11}), $\mu
_{N}(F_{\xi }^{+})+\mu _{N}(G_{\xi }^{+})\leq \mu _{N}(F_{\xi }^{+}+G_{\xi
}^{+})=\mu _{N}(W_{\xi }^{+}).$ Therefore, $\int_{\Bbb{R}^{N}}f+\int_{\Bbb{R}
^{N}}g\leq \int_{0}^{M}\mu _{N}(W_{\xi }^{+})d\xi \leq \int_{0}^{\infty }\mu
_{N}(W_{\xi }^{+})d\xi $ (by (\ref{8}), the second inequality is even an
equality). Now, $\int_{0}^{\infty }\mu _{N}(W_{\xi }^{+})d\xi =\int_{\Bbb{R}
^{N}}f\barwedge g$ since $f\barwedge g\geq 0$ and the proof is complete.
\end{proof}

Of course, (\ref{12}) does not follow from a pointwise inequality. The
condition $M_{f}=M_{g}$ cannot be dropped. For example, if $f>0$ and $g=0,$
then $f\barwedge 0=0$ and (\ref{12}) fails.

Lemma \ref{lm4} is just the stepping stone for much more general
inequalities. Recall that in the theory of Orlicz spaces, a nonconstant
function $\phi :[0,\infty ]\rightarrow [0,\infty ]$ is called a \emph{Young
function} if $\phi (0)=0$ and $\phi $ is nondecreasing, convex and left
continuous (\cite{O'Ne65}; see also \cite{AdFo03}, \cite{KrRu61} for a
simplified treatment limited to $N$-functions). In particular, $\phi (\infty
)=\infty .$

\begin{remark}
\label{rm1}If $\phi $ is a Young function and $h:\Bbb{R}^{N}\rightarrow
[0,\infty ]$ is measurable, the monotonicity of $\phi $ shows at once that $%
\phi (h)$ is measurable.
\end{remark}

If $\phi $ is a Young function, the corresponding Orlicz space $L_{\phi }$
consists of all the measurable functions $h$ on $\Bbb{R}^{N}$ such that $%
\int_{\Bbb{R}^{N}}\phi \left( \lambda |h|\right) <\infty $ for some $\lambda
>0$ (this makes sense by Remark \ref{rm1}). It is a (complete) normed space
for the Luxemburg norm $||\cdot ||_{\phi }$ defined by 
\begin{equation}
||h||_{\phi }:=\inf \left\{ r>0:\int_{\Bbb{R}^{N}}\phi (r^{-1}|h|)\leq
1\right\} .  \label{13}
\end{equation}
Since the right-hand side of (\ref{13}) is finite if and only if $h\in
L_{\phi },$ it will always be understood that $||h||_{\phi }=\infty $ when $%
h $ is measurable and $h\notin L_{\phi }.$ Thus, $h\in L_{\phi }$ is
equivalent to $||h||_{\phi }<\infty .$ Furthermore, it is readily checked
that 
\begin{equation}
||h||_{\phi }\leq ||k||_{\phi }\text{ if }|h|\leq |k|  \label{14}
\end{equation}
and, by the left-continuity of $\phi $ and monotone convergence\footnote{%
This is of course a well-known inequality.}, that if $h\in L_{\phi },$%
\begin{equation}
\int_{\Bbb{R}^{N}}\phi (||h||_{\phi }^{-1}|h|)\leq 1.  \label{15}
\end{equation}

If $\phi (\tau ):=\tau ^{p}$ for some $1\leq p<\infty ,$ then $||h||_{\phi
}=||h||_{p}.$ On the other hand, $||h||_{\phi }=||h||_{\infty }$ when $\phi $
is the indicator function of $[0,1]$ ($\phi =0$ in $[0,1]$ and $\infty $
outside).

\begin{lemma}
\label{lm5}If $\phi $ is a Young function and if $h:\Bbb{R}^{N}\rightarrow
[0,\infty ],$ then (see (\ref{7})) $M_{\phi (h)}=\phi (M_{h}).$
\end{lemma}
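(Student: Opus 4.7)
The plan is to establish the two inequalities $M_{\phi(h)} \leq \phi(M_h)$ and $M_{\phi(h)} \geq \phi(M_h)$ separately, noting that the first direction is essentially tautological from monotonicity while the second requires the left-continuity hypothesis on $\phi$.

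First I would dispose of the upper bound. For every $x \in \mathbb{R}^N$, $h(x) \leq M_h$, so the monotonicity of $\phi$ gives $\phi(h(x)) \leq \phi(M_h)$. Taking the supremum over $x$ yields $M_{\phi(h)} \leq \phi(M_h)$. No continuity is needed here, and this covers the edge case $M_h = \infty$ painlessly since $\phi(\infty) = \infty$ (as noted in the paper).

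For the reverse inequality I would split into cases according to whether $M_h$ is attained. If there exists $x_0$ with $h(x_0) = M_h$ (which handles in particular $M_h = 0$, when $h \equiv 0$), then $M_{\phi(h)} \geq \phi(h(x_0)) = \phi(M_h)$ and we are done. Otherwise choose a sequence $x_n$ with $h(x_n) \nearrow M_h$ from strictly below; by the monotonicity of $\phi$, $\phi(h(x_n))$ is nondecreasing and bounded by $M_{\phi(h)}$, and by the \emph{left-continuity} of $\phi$ at $M_h$, $\phi(h(x_n)) \to \phi(M_h)$. Hence $M_{\phi(h)} \geq \phi(M_h)$. The case $M_h = \infty$ is handled the same way, using that $h(x_n) \to \infty$ implies $\phi(h(x_n)) \to \phi(\infty) = \infty$ by monotonicity (left-continuity at $\infty$ is automatic from $\phi(\infty) = \infty$ and monotonicity).

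The only genuinely delicate point is the unattained-supremum case, where left-continuity of $\phi$ is indispensable: without it one could only conclude $M_{\phi(h)} \geq \phi(M_h^-)$, which may be strictly smaller than $\phi(M_h)$. This is why the definition of Young function in the paper explicitly requires left-continuity rather than mere monotonicity and convexity.
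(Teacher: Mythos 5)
Your proof is correct and rests on the same core idea as the paper's: monotonicity gives $M_{\phi(h)}\leq\phi(M_h)$ for free, and the reverse inequality is obtained by approaching $M_h$ from below and invoking the left-continuity of $\phi$. The only difference is organizational --- you split cases on whether $M_h$ is attained, whereas the paper splits on the position of $M_h$ relative to the threshold $\tau_1$ where $\phi$ first becomes infinite; your arrangement is arguably a bit leaner since it never needs to single out where $\phi$ blows up.
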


\begin{proof}
It is plain that $h\leq M_{h}$ implies $\phi (h)\leq \phi (M_{h}),$ so that $%
M_{\phi (h)}\leq \phi (M_{h}).$ It only remains to show that $\phi
(M_{h})\leq M_{\phi (h)},$ which is trivial if $M_{h}=0.$ We henceforth
assume $M_{h}>0.$

By the monotonicity of $\phi $ and $\phi (0)=0,$ there is $\tau _{1}\in
[0,\infty ]$ such that $\phi =\infty $ on $(\tau _{1},\infty ]$ and that $%
\phi <\infty $ on $[0,\tau _{1}).$ Specifically, $\tau _{1}=\sup \{\tau \geq
0:\phi (\tau )<\infty \}.$ If $\tau _{1}\in (0,\infty ),$ then $\phi (\tau
_{1})$ may be finite or infinite. We split the proof into three cases.

(i)\textbf{\ } $M_{h}>\tau _{1}.$ If so, $\tau _{1}<\infty $ and then $\phi
(M_{h})=\infty .$ The set $\{x\in \Bbb{R}^{N}:h(x)>\tau _{1}\}$ is not empty
and $\phi (h(x))=\infty $ for every $x$ in that set. Thus, $\{x\in \Bbb{R}
^{N}:\phi (h(x))=\infty \}\neq \emptyset ,$ so that $M_{\phi (h)}=\infty
=\phi (M_{h}).$

(ii) $M_{h}=\tau _{1}=\infty .$ Then, $\phi (M_{h})=\phi (\infty )=\infty .$
If $\tau >0$ is finite, $\emptyset \neq \{x\in \Bbb{R}^{N}:h(x)>\tau
\}\subset \{x\in \Bbb{R}^{N}:\phi (h(x))\geq \phi (\tau )\}$(by the
monotonicity of $\phi $). As a result, $M_{\phi (h)}\geq \phi (\tau ).$ By
letting $\tau \rightarrow \infty =\tau _{1}$ and since $\lim_{\tau
\rightarrow \infty }\phi (\tau )=\phi (\infty )=\infty $ by the left
continuity of $\phi ,$ it follows that $M_{\phi (h)}=\infty =\phi (M_{h}).$

(iii)\textbf{\ } $0<M_{h}\leq \tau _{1}.$ If $M_{h}=\infty ,$ then $\tau
_{1}=\infty $ and (ii) above applies. Assume now $M_{h}<\infty .$ For every $%
\varepsilon >0,S:=\{x\in \Bbb{R}^{N}:h(x)>M_{h}-\varepsilon \}\neq \emptyset
.$ If $\varepsilon $ is small enough, then $M_{h}-\varepsilon >0$ and $%
S\subset \{x\in \Bbb{R}^{N}:\phi (h(x))\geq \phi (M_{h}-\varepsilon )\}$ by
the monotonicity of $\phi .$ Hence, $M_{\phi (h)}\geq \phi
(M_{h}-\varepsilon ).$ Since $\phi $ is left continuous, $M_{\phi (h)}\geq
\phi (M_{h}).$
\end{proof}

From Lemma \ref{lm4} and Lemma \ref{lm5}, we obtain:

\begin{lemma}
\label{lm6}Let $\phi :[0,\infty ]\rightarrow [0,\infty ]$ be a Young
function. If $f,g:\Bbb{R}^{N}\rightarrow [0,\infty ]$ are Borel measurable
and if $M_{f}=M_{g}$ (possibly $\infty $), then $f\barwedge g$ is measurable
and 
\begin{equation}
\max \{||f||_{\phi },||g||_{\phi }\}\leq ||f\barwedge g||_{\phi }.
\label{16}
\end{equation}
\end{lemma}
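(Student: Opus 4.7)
The plan is to reduce the Luxemburg-norm estimate to the integral inequality of Lemma \ref{lm4} by composing $\phi$ with suitably scaled versions of $f$ and $g$, and then to unwind via (\ref{15}) and (\ref{13}). Measurability of $f\barwedge g$ is immediate from Lemma \ref{lm2} applied to the Borel measurable functions $f,g$, so the content of the lemma is the norm inequality.

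Set $r:=||f\barwedge g||_{\phi}$ and assume for the moment that $0<r<\infty$ (the cases $r=0$ and $r=\infty$ being routine). By (\ref{13}), the target inequalities $||f||_{\phi}\leq r$ and $||g||_{\phi}\leq r$ reduce to showing $\int_{\Bbb{R}^{N}}\phi(r^{-1}f)\leq 1$ and the analogous bound for $g$. To get these I would apply Lemma \ref{lm4} to the pair $\phi(r^{-1}f),\phi(r^{-1}g)$: both are measurable by Remark \ref{rm1}, and by Lemma \ref{lm5} the required supremum equality holds, namely $M_{\phi(r^{-1}f)}=\phi(r^{-1}M_{f})=\phi(r^{-1}M_{g})=M_{\phi(r^{-1}g)}$.

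The remaining hypothesis of Lemma \ref{lm4} is measurability of $\phi(r^{-1}f)\barwedge \phi(r^{-1}g)$, which I would secure via the key identity
\begin{equation*}
\phi(r^{-1}f)\barwedge \phi(r^{-1}g)=\phi(r^{-1}(f\barwedge g)),
\end{equation*}
whose right-hand side is measurable by Remark \ref{rm1} (using measurability of $f\barwedge g$). Since positive scaling commutes with $\barwedge$, the identity reduces to $\phi(h_{1}\barwedge h_{2})=\phi(h_{1})\barwedge \phi(h_{2})$ for arbitrary $h_{1},h_{2}:\Bbb{R}^{N}\rightarrow \lbrack 0,\infty]$. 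Monotonicity of $\phi$ gives $\phi(\min \{a,b\})=\min \{\phi(a),\phi(b)\}$, so everything hinges on the commutation $\phi(\sup_{y}H(y))=\sup_{y}\phi(H(y))$ for a family $\{H(y)\}$ in $[0,\infty]$. The direction $\leq$ is pointwise; for $\geq$ one picks a sequence $H(y_{n})$ nondecreasing to the supremum $s$ and invokes left continuity of $\phi$ at $s$ (with the usual extended-real convention when $s=\infty$).

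With the identity in hand, Lemma \ref{lm4} delivers
\begin{equation*}
\int_{\Bbb{R}^{N}}\phi(r^{-1}f)+\int_{\Bbb{R}^{N}}\phi(r^{-1}g)\leq \int_{\Bbb{R}^{N}}\phi(r^{-1}(f\barwedge g))\leq 1,
\end{equation*}
the second inequality being (\ref{15}) applied to $h=f\barwedge g$. Both integrals on the left are nonnegative and sum to at most $1$, hence each is at most $1$, and (\ref{13}) forces $||f||_{\phi}\leq r$ and $||g||_{\phi}\leq r$. The main obstacle is precisely the commutation of $\phi$ with the supremum defining $\barwedge$: this is where the left continuity clause in the definition of a Young function is essential, and it is the only point in the argument that goes beyond mere monotonicity.
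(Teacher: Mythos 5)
Your argument is correct and takes essentially the same route as the paper: both proofs rest on the identity $\phi(f\barwedge g)=\phi(f)\barwedge\phi(g)$ (monotonicity for the $\min$, left-continuity for the $\sup$), then feed $\phi(f)$ and $\phi(g)$ into Lemma~\ref{lm4} and unwind via the Luxemburg norm. The only cosmetic differences are that the paper delegates the commutation of $\phi$ with suprema to the already-proved Lemma~\ref{lm5} rather than re-deriving it inline, and that you have the two directions of that commutation labeled in reverse: $\sup_{y}\phi(H(y))\leq\phi(\sup_{y}H(y))$ is the pointwise consequence of monotonicity, while $\phi(\sup_{y}H(y))\leq\sup_{y}\phi(H(y))$ is the direction requiring left-continuity and the nondecreasing sequence $H(y_{n})$.
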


\begin{proof}
By Lemma \ref{lm2}, $f\barwedge g$ is measurable and so, by Remark \ref{rm1}%
, $\phi (f),\phi (g)$ and $\phi (f\barwedge g)$ are measurable. Since $\phi
(\min \{f(y),g(x-y)\})=\min \{\phi (f(y)),\phi (g(x-y))\}$ by the
monotonicity of $\phi ,$ we infer that $\sup_{y}\phi (\min
\{f(y),g(x-y)\})=(\phi (f)\barwedge \phi (g))(x).$ By Lemma \ref{lm5} with $%
h(y):=$ $\min \{f(y),g(x-y)\},$ the left-hand side is $\phi ((f\barwedge
g)(x)),$ so that $\phi (f\barwedge g)=\phi (f)\barwedge \phi (g).$ In
particular, $\phi (f)\barwedge \phi (g)$ is measurable.

Since $M_{f}=M_{g},$ then $M_{\phi (f)}=M_{\phi (g)},$ once again by Lemma 
\ref{lm5}. Thus, from the above and from Lemma \ref{lm4} with $f$ and $g$
replaced with $\phi (f)$ and $\phi (g),$ respectively, 
\begin{equation}
\int_{\Bbb{R}^{N}}\phi (f)+\int_{\Bbb{R}^{N}}\phi (g)\leq \int_{\Bbb{R}
^{N}}\phi (f\barwedge g).  \label{17}
\end{equation}

If $r>0,$ then $r^{-1}(f\barwedge g)=r^{-1}f\barwedge r^{-1}g$ and $%
M_{r^{-1}f}=M_{r^{-1}g}.$ Thus, (\ref{17}) for $r^{-1}f$ and $r^{-1}g$
yields $\int_{\Bbb{R}^{N}}\phi (r^{-1}f)\leq \int_{\Bbb{R}^{N}}\phi
(r^{-1}(f\barwedge g)),$ so that $||f||_{\phi }\leq ||f\barwedge g||_{\phi } 
$ by (\ref{13}). Likewise, $||g||_{\phi }\leq ||f\barwedge g||_{\phi }$ and (%
\ref{16}) follows.
\end{proof}

Of course, when $L_{\phi }=L^{1},$ Lemma \ref{lm4} yields the stronger $%
||f||_{1}+||g||_{1}\leq ||f\barwedge g||_{1}$ but (\ref{16}) is optimal when 
$\phi $ is arbitrary (let $f=g=1$ and $L_{\phi }=L_{\infty }$).

We are now in a position to prove our first main integral inequality for
infimal convolution. Recall once more the notation (\ref{7}).

\begin{theorem}
\label{th7} Suppose that $f,g:\Bbb{R}^{N}\rightarrow (-\infty ,\infty ]$ are
Borel measurable, that $m_{f},m_{g}\in \Bbb{R}$ and that $m_{f}+m_{g}\geq 0.$
Set 
\begin{equation}
m_{f,g}=(m_{f}-m_{g})/2.  \label{18}
\end{equation}
\newline
Then, $f-m_{f,g},g+m_{f,g}$ and $f\Box g$ are measurable and nonnegative and 
\begin{equation}
||(f-m_{f,g})^{-1}||_{\phi }+||(g+m_{f,g})^{-1}||_{\phi }\leq 4||(f\Box
g)^{-1}||_{\phi },  \label{19}
\end{equation}
for every Young function $\phi .$
\end{theorem}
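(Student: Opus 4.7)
The plan is to reduce to the symmetric case $m_f=m_g$ using identity (10), to compare $\Box$ with $\veebar$ via the elementary pointwise bound $a+b\le 2\max\{a,b\}$ for nonnegative $a,b$, and finally to invoke Lemma \ref{lm6} applied to the reciprocals of the shifted functions. Setting $\tilde f:=f-m_{f,g}$ and $\tilde g:=g+m_{f,g}$, the very choice of $m_{f,g}$ in (\ref{18}) is what forces $m_{\tilde f}=m_{\tilde g}=(m_f+m_g)/2\ge 0$, so $\tilde f,\tilde g$ are Borel measurable and nonnegative. By (\ref{10}), $f\Box g=\tilde f\Box \tilde g$, whose measurability is supplied by Lemma \ref{lm3}, and the pointwise bound $(f\Box g)(x)\ge m_f+m_g\ge 0$ gives its nonnegativity. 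The inequality (\ref{19}) thus reduces to proving $\|\tilde f^{-1}\|_\phi+\|\tilde g^{-1}\|_\phi\le 4\|(\tilde f\Box \tilde g)^{-1}\|_\phi$ for two Borel measurable, nonnegative functions with equal infima.

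For the pointwise comparison, taking the infimum over $y$ in $\tilde f(x-y)+\tilde g(y)\le 2\max\{\tilde f(x-y),\tilde g(y)\}$ yields $\tilde f\Box \tilde g\le 2(\tilde f\veebar \tilde g)$. Reciprocating (with the convention $1/0=\infty$) and using the identity $(\tilde f\veebar \tilde g)^{-1}=\tilde f^{-1}\barwedge \tilde g^{-1}$ recalled in the Introduction, I obtain the pointwise bound $\tfrac12(\tilde f^{-1}\barwedge \tilde g^{-1})\le (\tilde f\Box \tilde g)^{-1}$, and the monotonicity (\ref{14}) of the Luxemburg norm then gives $\tfrac12\|\tilde f^{-1}\barwedge \tilde g^{-1}\|_\phi\le \|(\tilde f\Box \tilde g)^{-1}\|_\phi$. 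The final step applies Lemma \ref{lm6} to the Borel measurable, nonnegative reciprocals $\tilde f^{-1}$ and $\tilde g^{-1}$, which share a common supremum $M_{\tilde f^{-1}}=M_{\tilde g^{-1}}=2/(m_f+m_g)$ (interpreted as $+\infty$ when $m_f+m_g=0$). Lemma \ref{lm6} then yields $\|\tilde f^{-1}\|_\phi,\|\tilde g^{-1}\|_\phi\le \|\tilde f^{-1}\barwedge \tilde g^{-1}\|_\phi$; bounding the sum by twice the maximum and chaining with the previous estimate produces the constant $4$, which is the product of the two factors of $2$ coming from $a+b\le 2\max\{a,b\}$ and from $\|x_1\|+\|x_2\|\le 2\max\{\|x_1\|,\|x_2\|\}$.

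The main delicacy to anticipate is the degenerate case $m_f+m_g=0$, in which $\tilde f^{-1}$ and $\tilde g^{-1}$ can equal $+\infty$ on nontrivial sets. One has to check that the identity $\tilde f\veebar \tilde g=(\tilde f^{-1}\barwedge \tilde g^{-1})^{-1}$ and the reciprocation argument survive the extended-real conventions $1/0=\infty$, $1/\infty=0$, and that the branch $M_{\tilde f^{-1}}=M_{\tilde g^{-1}}=\infty$ of Lemma \ref{lm6} is available; once that bookkeeping is in place, the three steps go through verbatim and no further technicalities intervene.
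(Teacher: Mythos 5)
Your proof is correct and follows essentially the same route as the paper's: shift by $m_{f,g}$ to equalize the infima, apply Lemma~\ref{lm6} to the reciprocals (which share a common supremum), and chain with the pointwise bound $\tilde f\Box\tilde g\le 2(\tilde f\veebar\tilde g)$ together with $(\tilde f\veebar\tilde g)^{-1}=\tilde f^{-1}\barwedge\tilde g^{-1}$; the two factors of $2$ you identify are exactly where the paper's constant $4$ comes from.
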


\begin{proof}
The measurability of $f\Box g$ was established in Lemma \ref{lm3}. Next, $%
\inf (f-m_{f,g})=\inf (g+m_{f,g})=(m_{f}+m_{g})/2\geq 0,$ whence $f\Box
g=(f-m_{f,g})\Box (g+m_{f,g})\geq 0$ by (\ref{10}). This also implies $%
(f-m_{f,g})^{-1}\geq 0,(g+m_{f,g})^{-1}\geq 0$ and $\sup
(f-m_{f,g})^{-1}=2(m_{f}+m_{g})^{-1}=\sup (g+m_{f,g})^{-1}.$ Therefore, the
inequality (\ref{16}) is applicable in the form 
\begin{equation*}
||(f-m_{f,g})^{-1}||_{\phi }+||(g+m_{f,g})^{-1}||_{\phi }\leq
2||(f-m_{f,g})^{-1}\barwedge (g+m_{f,g})^{-1}||_{\phi }
\end{equation*}
and (\ref{19}) follows from $0\leq h^{-1}\barwedge k^{-1}=(h\veebar
k)^{-1}\leq 2(h\Box k)^{-1}$ when $h$ and $k$ are nonnegative, from $f\Box
g=(f-m_{f,g})\Box (g+m_{f,g})\geq 0$ and from (\ref{14}).
\end{proof}

It was noted in the Introduction that the constant $4$ in (\ref{19}) is
already best possible when $L_{\phi }=L^{\infty }.$

\begin{remark}
Theorem \ref{th7} gives a simple necessary condition for the existence of
solutions of infimal convolution equations (see \cite{Ma91}, \cite{Lu10} and
the references therein): Suppose that $h\geq 0$ is measurable and that $g$
is Borel measurable and bounded below. If $h^{-1}\in L_{\phi }$ for some
Orlicz space $L_{\phi }$ and $||(g-m_{g}+m_{h}/2)^{-1}||_{\phi
}>4||h^{-1}||_{\phi }$ (in particular, if $(g-m_{g}+m_{h}/2)^{-1}\notin
L_{\phi }$), the equation $f\Box g=h$ has no Borel measurable solution $f.$
Indeed, if $f$ exists, then $m_{f}=m_{h}-m_{g}\in \Bbb{R}$ and (\ref{19})
cannot hold.
\end{remark}

If $z\neq 0$ is a constant, there is no simple pointwise relationship
between $(f-z)\veebar (g+z)$ and $f\veebar g.$ As a result, the method of
proof of Theorem \ref{th7} does not yield a variant of (\ref{15}) or (\ref
{19}) with $f\Box g$ replaced with $f\veebar g.$ However, if $f,g\geq 0,$
then $0\leq f\veebar g\leq f\Box g$ and such a variant can be obtained as a
straightforward corollary of Theorem \ref{th7}:

\begin{corollary}
\label{cor8} Suppose that $f,g:\Bbb{R}^{N}\rightarrow [-\infty ,\infty ]$
are Borel measurable, that $g\geq 0$ and that $f\not{\equiv}\infty $ and $g%
\not{\equiv}\infty ,$ so that $0\leq m_{f+}+m_{g}<\infty ,$ where $%
f_{+}:=\max \{f,0\}.$ Then, $f_{+}-m_{f_{+},g},g+m_{f_{+},g}$ (see (\ref{18}
)) and $f\veebar g$ are measurable and nonnegative and 
\begin{equation}
||(f_{+}-m_{f+,g})^{-1}||_{\phi }+||(g+m_{f_{+},g})^{-1}||_{\phi }\leq
4||(f\veebar g)^{-1}||_{\phi },  \label{20}
\end{equation}
for every Young function $\phi .$
\end{corollary}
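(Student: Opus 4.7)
The plan is to reduce the statement to Theorem \ref{th7} applied to the pair $(f_+, g)$ in place of $(f, g)$. First I would verify the hypotheses of Theorem \ref{th7} for this pair: $f_+$ is Borel measurable as the pointwise maximum of the Borel measurable $f$ and the constant $0$; the bounds $m_{f_+} \in [0, \infty)$ and $m_g \in [0, \infty)$ follow at once from $f_+ \geq 0$, $g \geq 0$, $f \not\equiv \infty$ and $g \not\equiv \infty$; and then $m_{f_+} + m_g \geq 0$ is automatic. The constant $m_{f_+, g}$ of the corollary is exactly the constant (\ref{18}) for the pair $(f_+, g)$, so Theorem \ref{th7} yields
\begin{equation*}
||(f_+ - m_{f_+, g})^{-1}||_{\phi} + ||(g + m_{f_+, g})^{-1}||_{\phi} \leq 4 ||(f_+ \Box g)^{-1}||_{\phi},
\end{equation*}
which has the shape of (\ref{20}) except with $f_+ \Box g$ in place of $f \veebar g$ on the right.

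The key step, and the only one that really uses the hypothesis $g \geq 0$, is the pointwise comparison $f \veebar g \leq f_+ \Box g$. This is the analogue for $f_+$ of the bound $f \veebar g \leq f \Box g$ recorded just before the corollary for nonnegative $f, g$. It follows from the elementary inequality $\max\{f(x-y), g(y)\} \leq f_+(x-y) + g(y)$, valid for every $y$ by a case split on the sign of $f(x-y)$: if $f(x-y) < 0$ both sides equal $g(y)$, and otherwise the inequality reduces to $\max\{a, b\} \leq a + b$ for nonnegative reals. Taking the infimum over $y$ yields the claim.

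The conclusion is then routine. Both $f \veebar g$ and $f_+ \Box g$ are nonnegative (for $f \veebar g$ use $\max\{f(x-y), g(y)\} \geq g(y) \geq 0$), so reciprocation reverses the inequality and produces $(f_+ \Box g)^{-1} \leq (f \veebar g)^{-1}$ pointwise, with the usual convention $1/0 = \infty$. The monotonicity (\ref{14}) of the Luxemburg norm then gives $||(f_+ \Box g)^{-1}||_{\phi} \leq ||(f \veebar g)^{-1}||_{\phi}$, which combined with the display above produces (\ref{20}). Measurability of $f \veebar g$ is granted by Lemma \ref{lm2}, and the nonnegativity of $f_+ - m_{f_+, g}$ and $g + m_{f_+, g}$ by the common value $(m_{f_+} + m_g)/2 \geq 0$ of their infima. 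I do not expect any real obstacle here; the corollary is essentially Theorem \ref{th7} composed with the pointwise bound $f \veebar g \leq f_+ \Box g$.
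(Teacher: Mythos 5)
Your proof is correct and follows essentially the same route as the paper: reduce to Theorem~\ref{th7} for the pair $(f_+,g)$ and then pass from $f_+\Box g$ to $f\veebar g$ via a pointwise inequality. The paper phrases that last step as $f\veebar g = f_+\veebar g\leq f_+\Box g$ (the equality following from $g\geq 0$ and the inequality being the already-recorded $h\veebar k\leq h\Box k$ for nonnegative $h,k$), while you prove the same bound directly by the case split on the sign of $f(x-y)$; this is a cosmetic rather than substantive difference.
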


\begin{proof}
Since $g\geq 0,$ it follows that $0\leq f\veebar g=f_{+}\veebar g\leq $ $%
f_{+}\Box g.$ By Lemma \ref{lm2}, $f\veebar g=f_{+}\veebar g$ is measurable.
Therefore, the corollary follows from (\ref{14}) and from Theorem \ref{th7}
for $f_{+}$ and $g.$
\end{proof}

The constant $4$ is also best possible in (\ref{20}) (among constants
independent of $\phi $): If $f=1$ and $g=\ell >1$ is constant, the
inequality for the $L^{\infty }$ norm is $4(\ell +1)^{-1}\leq 4\ell ^{-1}.$
By letting $\ell \rightarrow \infty ,$ it follows that, in the right-hand
side, $4$ cannot be lowered.

\section{The radial transforms $\hat{f}$ and $\check{f}$\label{transforms}}

The proof of Lemma \ref{lm4} shows that the existence of a converse of the
inequality (\ref{12}), that is, 
\begin{equation*}
\int_{\Bbb{R}^{N}}f\barwedge g\leq C\left( \int_{\Bbb{R}^{N}}f+\int_{\Bbb{R}
^{N}}g\right) ,
\end{equation*}
with $C>0$ independent of $f$ and $g$ would require $\mu _{N}(F_{\xi
}^{+}+G_{\xi }^{+})\leq C(\mu _{N}(F_{\xi }^{+})+\mu _{N}(G_{\xi }^{+}))$
for every $\xi >0.$ However, as pointed out in the Introduction, no converse
of the Brunn-Minkowski inequality or its weaker form (\ref{11}) holds in any
generality.

The transforms defined in this section will enable us (in the next section)
to take advantage of the fact that such a converse trivially exists when $X$
and $Y$ are Euclidean balls. The thought that this case is so special that
it cannot have any broad value would result in a serious oversight.

By a classical theorem of Jung \cite[p. 200]{Fe96}, \cite{Ju01}, every
nonempty bounded subset $X$ of $\Bbb{R}^{N}$ is contained in a \emph{unique}
closed ball $\overline{B}_{X}$ with minimal diameter among all closed balls
containing $X,$ called the \textit{\ enclosing ball} of $X.$ If $X$ is
unbounded, no closed ball contains $X$ and we set $\overline{B}_{X}:=\Bbb{R}%
^{N}.$ Lastly, if $X=\emptyset ,$ every singleton $\{x\}$ satisfies the
``minimal diameter'' requirement, whence uniqueness, but not existence, is
lost. For definiteness, we arbitrarily set $\overline{B}_{\emptyset
}:=\{0\}. $ Evidently, $X\subset \overline{B}_{X}$ in all cases. Jung's
theorem also provides the estimate $\limfunc{diam}(X)\leq \limfunc{diam}(%
\overline{B}_{X})\leq \sqrt{2N/(N+1)}\limfunc{diam}(X),$ but we shall only
make use of the (trivial) first one.

\begin{remark}
\label{rm2}An easily overlooked aspect of enclosing balls is that $X\subset
Y $ implies only $\mu _{N}(\overline{B}_{X})\leq \mu _{N}(\overline{B}_{Y})$
but \emph{not} $\overline{B}_{X}\subset \overline{B}_{Y},$ unless $N=1.$
\end{remark}

The following property of enclosing balls will be important.

\begin{lemma}
\label{lm9}If $X_{n}$ is a nondecreasing sequence of subsets of $\Bbb{R}%
^{N}, $ then $\mu _{N}(\overline{B}_{\cup X_{n}})=\lim \mu _{N}(\overline{B}%
_{X_{n}})=\sup \mu _{N}(\overline{B}_{X_{n}}).$
\end{lemma}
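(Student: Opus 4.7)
The plan is to split the statement into a nearly tautological monotonicity half and a harder reverse inequality that is proved by a compactness argument. Write $X:=\bigcup_n X_n$. Since $X_n\subset X_{n+1}\subset X$, Remark \ref{rm2} gives $\mu_N(\overline{B}_{X_n})\leq \mu_N(\overline{B}_{X_{n+1}})\leq \mu_N(\overline{B}_X)$; hence the sequence is nondecreasing, its limit equals its supremum, and both are bounded above by $\mu_N(\overline{B}_X)$. So everything reduces to the reverse inequality $\mu_N(\overline{B}_X)\leq \lim_n\mu_N(\overline{B}_{X_n})$.

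I would handle the bounded and unbounded cases separately. Suppose first that $X$ is bounded, and write $\overline{B}_{X_n}=\overline{B}(c_n,r_n)$ and $\overline{B}_X=\overline{B}(c,r)$. The inclusion $X_n\subset X\subset \overline{B}(c,r)$ and the minimality in Jung's definition force $r_n\leq r$. The main obstacle is to bound the centers $c_n$: fix any $x_0\in X_{n_0}$ (if every $X_n$ is empty the lemma is trivial, since then $X=\emptyset$ and all the relevant measures vanish) and observe that for $n\geq n_0$ one has $x_0\in X_n\subset \overline{B}(c_n,r_n)$, so $|c_n-x_0|\leq r_n\leq r$. Thus $(c_n)$ is bounded. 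By Bolzano--Weierstrass, extract $c_{n_k}\to c_\infty$, while $r_{n_k}\to r_\infty:=\lim_n r_n$ (which exists by the already established monotonicity and boundedness). For each $x\in X$ there is $m$ with $x\in X_m$, and for every $n_k\geq m$ we have $|x-c_{n_k}|\leq r_{n_k}$; passing to the limit yields $X\subset \overline{B}(c_\infty,r_\infty)$, so by minimality in the definition of $\overline{B}_X$, $r\leq r_\infty$. Since $\mu_N(\overline{B}(c,r))$ is a monotone function of $r$, this is the desired inequality.

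If instead $X$ is unbounded, then $\overline{B}_X=\Bbb{R}^N$ and the goal becomes $\lim \mu_N(\overline{B}_{X_n})=\infty$. If some $X_n$ is itself unbounded, then $\overline{B}_{X_n}=\Bbb{R}^N$ by convention and the conclusion is immediate. Otherwise every $X_n$ is bounded; were the radii $r_n$ bounded as well, the same argument as in the first case (fix $x_0\in X_{n_0}$, extract a convergent subsequence of $(c_n)$) would produce a single closed ball containing every $X_n$, hence containing $X$, contradicting the unboundedness of $X$. Hence $r_n\to\infty$ monotonically and the volumes diverge.

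The genuine work is concentrated in the bounded case, and within it the only nontrivial step is the a priori bound $|c_n|\leq |x_0|+r$ obtained from a point $x_0$ lying in all but finitely many $X_n$; once that unlocks Bolzano--Weierstrass, the rest is just taking limits inside the defining inclusions and invoking the uniqueness/minimality property that characterises $\overline{B}_X$.
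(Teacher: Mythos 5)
Your proof is correct and follows the same strategy as the paper's: split into bounded and unbounded cases, and in the bounded case bound the centers of the enclosing balls and apply Bolzano--Weierstrass to pass to a limiting ball that contains $X$. If anything you are more explicit than the paper, which dismisses the boundedness of the centers $c_n$ as ``a simple contradiction argument,'' whereas you supply the concrete bound $|c_n|\leq |x_0|+r$ via a point $x_0$ lying in cofinitely many $X_n$.
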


\begin{proof}
Set $X:=\cup X_{n}.$ If $X$ is unbounded, then $\overline{B}_{X}=\Bbb{R}^{N}$
and so $\mu _{N}(\overline{B}_{X})=\infty .$ Since $X:=\cup X_{n}$ and $%
X_{n}\subset X_{n+1},$ the diameter of $X_{n}$ and, hence, that of $%
\overline{B}_{X_{n}},$ tends to $\infty .$ Accordingly, $\lim \mu _{N}(%
\overline{B}_{X_{n}})=\infty .$

Suppose now that $X$ is bounded, so that $\overline{B}_{X}$ is a ball. Since 
$X_{n}\subset X$ implies $\mu _{N}(\overline{B}_{X_{n}})\leq \mu _{N}(%
\overline{B}_{X})$ and since $\mu _{N}(\overline{B}_{X_{n}})$ is
nondecreasing, it is plain that $\lim \mu _{N}(\overline{B}_{X_{n}})\leq \mu
_{N}(\overline{B}_{X}).$ To prove the converse, call $r_{n}\geq 0$ the
radius of $\overline{B}_{X_{n}}.$ The sequence $r_{n}$ is nondecreasing and
bounded above (by the radius of $\overline{B}_{X}$) and so it has a limit $%
r\geq r_{n}$ for every $n.$ As a result, $\lim \mu _{N}(\overline{B}
_{X_{n}}) $ is the measure of any ball with radius $r.$

Next, call $x_{n}$ the center of $\overline{B}_{X_{n}}.$ By a simple
contradiction argument, the sequence $x_{n}$ is bounded (since $x_{n}$ might
not be in $\overline{B}_{X}$ -see Remark \ref{rm2}- this is not totally
trivial). After extracting a subsequence, assume that $x_{n}\rightarrow x\in 
\Bbb{R}^{N}.$ Every $y\in X$ is in $X_{n}$ for $n$ large enough. Since $%
\overline{B}_{X_{n}}\subset \overline{B}(x_{n},r),$ it follows that $y\in 
\overline{B}(x,r).$ Thus, $X\subset \overline{B}(x,r),$ whence $\mu _{N}(%
\overline{B}(x,r))\geq \mu _{N}(\overline{B}_{X})$ by definition of $%
\overline{B}_{X}.$ Since $r=\lim r_{n}$ amounts to $\mu _{N}(\overline{B}%
(x,r))=\lim \mu _{N}(\overline{B}_{X_{n}}),$ it follows that $\lim \mu _{N}(%
\overline{B}_{X_{n}})\geq \mu _{N}(\overline{B}_{X}).$
\end{proof}

Given any function $f:\Bbb{R}^{N}\rightarrow [-\infty ,\infty ],$ we now
proceed to constructing a measurable radially symmetric function $\hat{f}:%
\Bbb{R}^{N}\rightarrow [-\infty ,\infty ]$ whose upper level sets $\hat{F}
_{\xi }^{+}$ have measure equal to $\mu _{N}(\overline{B}_{F_{\xi }^{+}})$
for every $\xi .$ The construction follows that of the nonincreasing
rearrangement of $f.$

\begin{lemma}
\label{lm10}The function $\mu _{N}(\overline{B}_{F_{\xi }^{+}})$ is
nonincreasing and right-continuous on $[-\infty ,\infty ].$
\end{lemma}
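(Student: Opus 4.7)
The plan is to reduce both properties to Remark \ref{rm2} and Lemma \ref{lm9}, respectively.

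For monotonicity, I observe that if $\xi_{1}\leq \xi_{2}$ then $F_{\xi_{2}}^{+}\subset F_{\xi_{1}}^{+}$ directly from the definition $F_{\xi}^{+}=\{f>\xi\}$. By Remark \ref{rm2}, this inclusion yields $\mu_{N}(\overline{B}_{F_{\xi_{2}}^{+}})\leq \mu_{N}(\overline{B}_{F_{\xi_{1}}^{+}})$, which is exactly the nonincreasing property.

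For right-continuity at a point $\xi\in[-\infty,\infty)$, I take any sequence $\xi_{n}\downarrow \xi$. The key set-theoretic observation is that
\begin{equation*}
F_{\xi}^{+}=\bigcup_{n}F_{\xi_{n}}^{+},
\end{equation*}
with the union being nondecreasing. Indeed, $F_{\xi_{n}}^{+}\subset F_{\xi_{n+1}}^{+}$ follows from $\xi_{n}\geq \xi_{n+1}$; and $x\in F_{\xi}^{+}$ means $f(x)>\xi$, so since $\xi_{n}\downarrow \xi$ there is an $n_{0}$ with $\xi_{n}<f(x)$ for $n\geq n_{0}$, i.e., $x\in F_{\xi_{n}}^{+}$; conversely, any $x$ in some $F_{\xi_{n}}^{+}$ satisfies $f(x)>\xi_{n}\geq \xi$ and so lies in $F_{\xi}^{+}$. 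Lemma \ref{lm9} applied to this nondecreasing sequence then gives
\begin{equation*}
\mu_{N}(\overline{B}_{F_{\xi}^{+}})=\lim_{n\to\infty}\mu_{N}(\overline{B}_{F_{\xi_{n}}^{+}})=\sup_{n}\mu_{N}(\overline{B}_{F_{\xi_{n}}^{+}}),
\end{equation*}
which, combined with the already established monotonicity, is exactly right-continuity at $\xi$. At $\xi=\infty$ there is nothing to prove.

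I do not expect any genuine obstacle here: the work has been front-loaded into Lemma \ref{lm9}, whose proof already handles the delicate points (behavior of centers and radii of enclosing balls along a nested union). The only subtlety worth a second of care is that the inclusion in Remark \ref{rm2} of enclosing balls themselves fails for $N\geq 2$, so the argument must be carried out at the level of \emph{measures} of enclosing balls rather than the balls themselves; but this is precisely what both Remark \ref{rm2} and Lemma \ref{lm9} are designed to provide.
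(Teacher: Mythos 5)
Your proof is correct and follows essentially the same route as the paper: monotonicity from $F_{\eta}^{+}\subset F_{\xi}^{+}$ together with the monotonicity of $X\mapsto\mu_{N}(\overline{B}_{X})$ (Remark \ref{rm2}), and right-continuity from $F_{\xi}^{+}=\bigcup_{n}F_{\xi_{n}}^{+}$ and Lemma \ref{lm9}. The extra detail you supply (verifying the nondecreasing union identity and noting that the argument must stay at the level of measures, not of the balls themselves) is sound but does not change the argument.
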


\begin{proof}
If $\xi <\eta ,$ then $F_{\eta }^{+}\subset F_{\xi }^{+},$ so that $\mu _{N}(%
\overline{B}_{F_{\eta }^{+}})\leq \mu _{N}(\overline{B}_{F_{\xi }^{+}}).$
For the right continuity, let $\xi _{n}\searrow \xi ,$ so that $F_{\xi
}^{+}=\cup F_{\xi _{n}}^{+}$ and then, by Lemma \ref{lm9}, $\mu _{N}(%
\overline{B}_{F_{\xi }^{+}})=\lim \mu _{N}(\overline{B}_{F_{\xi _{n}}^{+}}).$
\end{proof}

Call $\rho _{f}^{+}(\xi )$ the radius of $\overline{B}_{F_{\xi }^{+}}.$
Since $\rho _{f}^{+}(\xi )$ is proportional to $\mu _{N}(\overline{B}
_{F_{\xi }^{+}})^{1/N},$ it follows from Lemma \ref{lm10} that $\rho
_{f}^{+} $ is nonincreasing and right-continuous. Therefore, 
\begin{equation}
\gamma _{f}^{+}(t):=\inf \{\xi :\rho _{f}^{+}(\xi )\leq t\},  \label{21}
\end{equation}
is a nonincreasing and right-continuous function on $[0,\infty )$ and 
\begin{equation}
\{t\geq 0:\gamma _{f}^{+}(t)>\xi \}=[0,\rho _{f}^{+}(\xi )).  \label{22}
\end{equation}
Indeed, when $f\geq 0$ and $\rho _{f}^{+}(\xi )$ is replaced with $\mu
_{N}(F_{\xi }^{+}),\gamma _{f}^{+}$ becomes the nonincreasing rearrangement
of $f$ and these properties follow uniquely from the monotonicity and
right-continuity of $\mu _{N}(F_{\xi }^{+});$ see for instance 
\cite[pp. 26-27]{Zi89}. We also point out that in most modern expositions,
the nonincreasing rearrangement of a function $f$ is defined to be that of $%
|f|.$ This has not always been the case (see Day \cite{Da72} or Luxemburg 
\cite{Lu67}) and the monotonicity and right-continuity properties of
nonincreasing rearrangements are independent of whether $f$ or $|f|$ is used
in their definition.

We now set 
\begin{equation}
\hat{f}(x):=\gamma _{f}^{+}(|x|).  \label{23}
\end{equation}
Some basic properties of $\hat{f}$ are summarized in the next theorem.

\begin{theorem}
\label{th11}Given $f:\Bbb{R}^{N}\rightarrow [-\infty ,\infty ],$ the
function $\hat{f}$ has the following properties:\newline
(i) $\hat{f}$ is measurable and $\hat{f}=f$ a.e. if and only if $f(x)$ is
a.e. equal to a nonincreasing function of $|x|$. If also $f(x)$ is a
right-continuous function of $|x|,$ then $\hat{f}=f.$ \newline
(ii) $\mu _{N}(\hat{F}_{\xi }^{+})=\mu _{N}(\overline{B}_{F_{\xi }^{+}})$
for every $\xi \in [-\infty ,\infty ],$ where $\hat{F}_{\xi }^{+}$ denotes
the upper $\xi $-level set of $\hat{f}.$\newline
(iii) $M_{\hat{f}}\leq M_{f}$ and $m_{\hat{f}}\geq m_{f}$ (in particular, $%
f\geq 0\Rightarrow \hat{f}\geq 0$). Furthermore, $M_{\hat{f}}=\limfunc{ess}%
\sup \hat{f}$ and $m_{\hat{f}}=\limfunc{ess}\inf \hat{f}.\newline
$(iv) $(f+z\hat{)}=\hat{f}+z$ for $z\in \Bbb{R}$ and $(f(c\cdot )\hat{)}=%
\hat{f}(c\cdot )$ for $c\in \Bbb{R}\backslash \{0\}.$ \newline
(v) $(cf\hat{)}=c\hat{f}$ for every $c\geq 0.$\newline
(vi) If $h:\Bbb{R}^{N}\rightarrow [-\infty ,\infty ]$ and $h\leq f,$ then $%
\hat{h}\leq \hat{f}.$ \newline
(vii) If $f$ is bounded below on bounded subsets and $\lim_{|x|\rightarrow
\infty }f(x)=-\infty $ and if $h:\Bbb{R}^{N}\rightarrow [-\infty ,\infty ]$
satisfies $h(x)\leq f(x)$ for $|x|$ large enough, then $\hat{h}(x)\leq \hat{f%
}(x)$ for $|x|$ large enough. Furthermore, if $f(x)$ is a strictly
decreasing function of $|x|$ and if $h(x)\leq f(x)$ when $x\notin B$ for
some open ball $B$ centered at the origin, then $\hat{h}(x)\leq \hat{f}(x)$ (%
$=f(x)$ by (i)) for every $x\notin B.$
\end{theorem}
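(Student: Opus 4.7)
The plan is to translate every assertion back to the underlying data $F_\xi^+$, the enclosing-ball radius $\rho_f^+(\xi)$, and the right-continuous nonincreasing function $\gamma_f^+$ of (\ref{21}); the identity (\ref{22}) then supplies a dictionary between the upper level sets of $\hat{f}$ and the enclosing balls of those of $f$.

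Parts (i)--(vi) reduce to routine computations in this dictionary. For (i), $\hat{f}(x)=\gamma_f^+(|x|)$ is measurable because $\gamma_f^+$ is monotone; if $\hat{f}=f$ a.e., then $f$ is a.e.\ a nonincreasing function of $|x|$ since $\hat{f}$ is, and conversely, if $f$ coincides a.e.\ with some nonincreasing radial $k$, the shared upper level sets force $\rho_f^+(\xi)$ to agree with the radii of the corresponding balls, yielding $\hat{f}=f$ a.e.; pointwise right-continuity in $|x|$ then removes the remaining null set. Part (ii) is immediate from (\ref{22}), since $\hat{F}_\xi^+$ is the open ball of radius $\rho_f^+(\xi)$. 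For (iii), $\xi>M_f$ forces $F_\xi^+=\emptyset$ and $\rho_f^+(\xi)=0$, so $\gamma_f^+(t)\leq\xi$ for $t>0$, giving $M_{\hat{f}}\leq M_f$; dually, $\xi<m_f$ makes $F_\xi^+$ unbounded, so $\rho_f^+(\xi)=\infty$ and $\gamma_f^+(t)\geq m_f$. The essential supremum and infimum coincide with the supremum and infimum of $\hat{f}$ because right-continuity of $\gamma_f^+$ turns every attained value into the limit of values on positive-measure annuli.

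Parts (iv), (v), and (vi) follow by computing how each operation acts on level sets: $F_{f+z,\xi}^+=F_{\xi-z}^+$, $F_{cf,\xi}^+=F_{\xi/c}^+$ for $c>0$, and $F_{f(c\cdot),\xi}^+=c^{-1}F_\xi^+$ for $c\neq 0$ (with a harmless reflection when $c<0$, irrelevant for the radius of the enclosing ball). These identities transport through $\rho_f^+$ and $\gamma_f^+$ to produce the announced identities for $\hat{f}$. For (vi), $h\leq f$ gives $H_\xi^+\subset F_\xi^+$, whence $\rho_h^+\leq\rho_f^+$ by Remark \ref{rm2}, and enlargement of the defining set in (\ref{21}) yields $\gamma_h^+\leq\gamma_f^+$.

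The main obstacle is (vii), since $h\leq f$ is only assumed off a bounded set, so that $H_\xi^+\subset F_\xi^+$ may fail. The idea is to push $\xi$ low enough that $F_\xi^+$ absorbs the entire ambiguous region. Choose $R_0$ with $h\leq f$ on $B(0,R_0)^c$ and set $\xi_0:=\inf_{\overline{B}(0,R_0)}f$, finite by the bounded-below-on-bounded-sets hypothesis. For every $\xi<\xi_0$, $F_\xi^+\supset\overline{B}(0,R_0)$, so
\[
H_\xi^+=(H_\xi^+\cap B(0,R_0))\cup(H_\xi^+\cap B(0,R_0)^c)\subset\overline{B}(0,R_0)\cup F_\xi^+=F_\xi^+,
\]
giving $\rho_h^+(\xi)\leq\rho_f^+(\xi)$ on $(-\infty,\xi_0)$. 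Fix any $\xi_1<\xi_0$; since $f\to-\infty$ at infinity, $F_{\xi_1}^+$ is bounded, so $\tau_1:=\rho_f^+(\xi_1)<\infty$. For $t>\tau_1$ one has $\gamma_f^+(t)\leq\xi_1<\xi_0$, and right-continuity of $\rho_f^+$ (Lemma \ref{lm10}) gives $\rho_f^+(\gamma_f^+(t))\leq t$; hence $\rho_h^+(\gamma_f^+(t))\leq t$ as well, which delivers $\gamma_h^+(t)\leq\gamma_f^+(t)$ and so $\hat{h}(x)\leq\hat{f}(x)$ for $|x|>\tau_1$. For the strict-decrease addendum, the same containment argument applied with $\xi=f(x)$ at any $x\notin B=B(0,R_0)$ shows $H_{f(x)}^+\subset\overline{B}(0,|x|)$, so $\rho_h^+(f(x))\leq|x|$; the equivalence $\gamma_h^+(t)\leq s\Leftrightarrow\rho_h^+(s)\leq t$ (valid since $\rho_h^+$ is right-continuous) then gives $\hat{h}(x)\leq f(x)=\hat{f}(x)$, invoking (i) for the last identity. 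The delicate point throughout is coordinating right-continuity of $\rho_f^+$ with the threshold $\xi_0$ so that $\gamma_f^+(t)$ is realized strictly inside $(-\infty,\xi_0)$ for $t$ large.
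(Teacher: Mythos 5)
Parts (i)--(vi) follow the same route as the paper, translating each claim into statements about $\rho_f^+$ and $\gamma_f^+$; your terse sketch is sound, with only the cosmetic omission that to pin down $M_{\hat f}=\gamma_f^+(0)\le M_f$ one should either use $\xi\ge M_f$ (not just $\xi>M_f$) or appeal to right-continuity at $t=0$. In the first half of (vii) your argument is actually a little cleaner than the paper's: you observe directly that for $\xi<\xi_0:=\inf_{\overline B(0,R_0)}f$ one has $H_\xi^+\cap B(0,R_0)\subset B(0,R_0)\subset F_\xi^+$, so $H_\xi^+\subset F_\xi^+$ without the paper's device of first increasing $h$ on $B$ via (vi). You then pass to $\gamma_h^+\le\gamma_f^+$ through the identity $\rho_f^+(\gamma_f^+(t))\le t$ (a consequence of right-continuity of $\rho_f^+$), whereas the paper manipulates the defining infima directly by restricting to $\xi\le\xi_0$; both are valid.

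The gap is in the ``strictly decreasing'' addendum. Your argument establishes $\hat h(x)\le f(x)$ for $x\notin B$, and you then conclude $\hat h(x)\le\hat f(x)$ by asserting $f(x)=\hat f(x)$ ``by (i).'' But (i) gives $\hat f=f$ only under the \emph{additional} hypothesis that $f(x)$ is a right-continuous function of $|x|$, which is not assumed in (vii); a strictly decreasing function can have jumps. Worse, when $f(x)$ is a nonincreasing function of $|x|$ but fails right-continuity at some radius $r_0$, the rearrangement satisfies $\hat f(x)<f(x)$ at $|x|=r_0$ (the rearranged function takes the right limit), so $\hat f\le f$, not the reverse; thus $\hat h\le f$ does \emph{not} imply $\hat h\le\hat f$. (The parenthetical ``$=f(x)$ by (i)'' in the theorem statement shares this imprecision, but the paper's own proof never relies on it: it proves $\hat h(x)\le\hat f(x)$ for $|x|\ge t\rho$, lets $t\to 1$, and closes the boundary case by right-continuity of $\hat f$ and $\hat h$ themselves.) Your proof can be repaired by noting that the inequality $\hat h(x)\le f(x)$ holds for all $|x|\ge R_0$, then for fixed $r\ge R_0$ taking $s\searrow r$ and using $\gamma_h^+(r)=\lim_{s\to r^+}\gamma_h^+(s)\le\lim_{s\to r^+}f(se_1)=\gamma_f^+(r)$, i.e.\ passing to right limits to replace $f$ by $\hat f$; as written, however, the final step is unjustified.
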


\begin{proof}
(i) The measurability of $\hat{f}$ follows at once from the monotonicity of $%
\gamma _{f}^{+}$ and the necessity of the given conditions for $\hat{f}=f$
a.e. is obvious. Conversely, if $f(x)=\gamma (|x|)$ with $\gamma :[0,\infty
)\rightarrow [-\infty ,\infty ]$ nonincreasing, the upper level sets of $f$
are balls centered at the origin (possibly $\Bbb{R}^{N}$) and $\rho _{f}^{+}$
is the distribution function of $\gamma ,$ so that $\gamma _{f}^{+}=\gamma
^{*},$ the nonincreasing rearrangement of $\gamma .$ Since $\gamma
^{*}=\gamma $ except perhaps at the countably many points of discontinuity
of $\gamma ,$ it follows that $\hat{f}=f$ a.e. Clearly, this remains true if 
$f(x)=\gamma (|x|)$ a.e. If $\gamma $ is right-continuous, then $\gamma
^{*}=\gamma $ and $\hat{f}=f.$

(ii) Just notice that, by (\ref{22}) and (\ref{23}), $\hat{F}_{\xi }^{+}$ is
the open ball with center $0$ and radius $\rho _{f}^{+}(\xi ).$

(iii) With no loss of generality, assume $M_{f}<\infty .$ Since $\gamma
_{f}^{+}$ is nonincreasing, $\max \gamma _{f}^{+}=\gamma _{f}^{+}(0)=\inf
\{\xi :\rho _{f}^{+}(\xi )=0\}.$ If $\xi \geq M_{f},$ then $F_{\xi
}^{+}=\emptyset ,$ whence $\overline{B}_{F_{\xi }^{+}}=\{0\}$ and so $\rho
_{f}^{+}(\xi )=0.$ Thus, $\max \gamma _{f}^{+}=\inf \{\xi :\mu _{N}(F_{\xi
}^{+})=0\}\leq M_{f}.$ On the other hand, by (\ref{23}), $\max \gamma
_{f}^{+}=M_{\hat{f}}.$ This shows that $M_{\hat{f}}\leq M_{f}.$ Furthermore, 
$M_{\hat{f}}=\limfunc{ess}\sup \hat{f}$ by (\ref{23}) and the
right-continuity of $\gamma _{f}^{+}.$

That $m_{\hat{f}}\geq m_{f}$ is obvious if $m_{f}=-\infty ,$ or if $%
m_{f}=\infty $ (for then $f=\hat{f}=\infty $). If $m_{f}\in \Bbb{R}$ and $%
\xi <m_{f},$ then $\rho _{f}^{+}(\xi )=\infty .$ Thus, $\gamma _{f}^{+}\geq
m_{f} $ by (\ref{21}) and so $\hat{f}\geq m_{f},$ whence $m_{\hat{f}}\geq
m_{f}.$ That $m_{\hat{f}}=\limfunc{ess}\inf \hat{f}$ follows from the
monotonicity of $\gamma _{f}^{+}.$

(iv) Since $\{x\in \Bbb{R}^{N}:f(x)+z>\xi \}=F_{\xi -z}^{+},$ it follows
that $\rho _{f+z}^{+}(\xi )=\rho _{f}^{+}(\xi -z),$ which in turn yields $%
\gamma _{f+z}^{+}=\gamma _{f}^{+}+z,$ i.e., $(f+z\hat{)}=\hat{f}+z.$ The
proofs that $f(c\cdot \hat{)}=\hat{f}(c\cdot )$ if $c\in \Bbb{R}\backslash
\{0\}$ is equally straightforward.

(v) Since this is trivial when $c=0,$ assume $c>0.$ Then, $\rho
_{cf}^{+}(\xi )=\rho _{f}^{+}(\xi /c),$ whence $\gamma _{cf}^{+}=c\gamma
_{f}^{+},$ i.e. $(cf\hat{)}=c\hat{f}.$

(vi) If $h\leq f,$ then (with a self-explanatory notation) $H_{\xi
}^{+}\subset F_{\xi }^{+}$ and so $\mu _{N}(\overline{B}_{H_{\xi }^{+}})\leq
\mu _{N}(\overline{B}_{F_{\xi }^{+}}).$ Hence, $\rho _{h}^{+}(\xi )\leq \rho
_{f}^{+}(\xi )$ and, by (\ref{21}), $\gamma _{h}^{+}\leq \gamma _{f}^{+},$
so that $\hat{h}\leq \hat{f}.$

(vii) Choose an open ball $B$ centered at the origin such that $h(x)\leq
f(x) $ when $x\notin B.$ Since $f$ is bounded below on bounded subsets, $%
\inf_{B}f $ is finite and, if $\xi _{0}<\inf_{B}f,$ then $B\subset F_{\xi
}^{+}$ for every $\xi \leq \xi _{0}.$ By (vi), $\hat{h}$ is increased when $%
h $ is increased. Thus, if it can be shown that $\hat{h}\leq \hat{f}$ after
increasing $h,$ this inequality also holds before $h$ is increased. In
particular, we may increase $h$ on $B$ so that $\xi _{0}<\inf_{B}h$ and then 
$B\subset H_{\xi }^{+}$ for every $\xi \leq \xi _{0}.$ Thus, $B\subset
H_{\xi }^{+}\cap F_{\xi }^{+}$ for every $\xi \leq \xi _{0}.$ On the other
hand, $\{x\notin B:h(x)>\xi \}\subset \{x\notin B:f(x)>\xi \}$ since $h\leq
f $ on $\Bbb{R}^{N}\backslash B.$ Altogether, if $\xi \leq \xi _{0},$ then $%
H_{\xi }^{+}\subset F_{\xi }^{+}$ and so $\rho _{h}^{+}(\xi )\leq \rho
_{f}^{+}(\xi ).$ As a result, 
\begin{multline}
\gamma _{h}^{+}(t):=\inf \{\xi :\rho _{h}^{+}(\xi )\leq t\}\leq \inf \{\xi
\leq \xi _{0}:\rho _{h}^{+}(\xi )\leq t\}\leq  \label{24} \\
\inf \{\xi \leq \xi _{0}:\rho _{f}^{+}(\xi )\leq t\}.
\end{multline}

Since $\lim_{|x|\rightarrow \infty }f(x)=-\infty ,$ the level set $F_{\xi
_{0}}^{+}$ is bounded, whence $\rho _{f}^{+}(\xi _{0})<\infty .$ Choose any $%
t\geq \rho _{f}^{+}(\xi _{0}).$ If $\xi >\xi _{0},$ then $\rho _{f}^{+}(\xi
)\leq \rho _{f}^{+}(\xi _{0})\leq t$ by the monotonicity of $\rho _{f}^{+},$
so that $\gamma _{f}^{+}(t):=\inf \{\xi :\rho _{f}^{+}(\xi )\leq t\}=\inf
\{\xi \leq \xi _{0}:\rho _{f}^{+}(\xi )\leq t\}.$ By (\ref{24}), $\gamma
_{h}^{+}(t)\leq \gamma _{f}^{+}(t).$ Since this is true for every $t\geq
\rho _{f}^{+}(\xi _{0}),$ it follows that $\hat{h}(x)\leq \hat{f}(x)$ when $%
|x|\geq \rho _{f}^{+}(\xi _{0}).$

To complete the proof, assume in addition that $f(x)$ is a strictly
decreasing function of $|x|.$ We show that $\hat{h}(x)\leq \hat{f}(x)$ when $%
x\notin B.$ If $B=\emptyset ,$ the result follows from (v). From now on,
assume $B\neq \emptyset $ (hence $B\neq \{0\}$ as well since $B$ is open).

By the monotonicity of $f$ in $|x|,$ $\inf_{tB}f<\inf_{B}f$ if $t>1.$ It
follows that $F_{\xi _{0}}^{+}\subset tB$ if $t>1$ and $\xi _{0}$ above is
close enough to $\inf_{B}f.$ Thus, $\overline{B}_{F_{\xi _{0}}^{+}}$ $%
\subset t\overline{B},$ so that $\rho _{f}^{+}(\xi _{0})\leq t\rho $ where $%
\rho $ is the radius of $B.$ From the above, $\hat{h}(x)\leq \hat{f}(x)$
when $|x|\geq t\rho $ and, hence, when $|x|\geq \rho $ by first letting $%
t\rightarrow 1$ (which gives only $|x|>\rho $) and next using the
right-continuity of $\hat{f}$ and $\hat{h}$ with respect to $|x|.$ Since $%
\rho $ is the radius of $B$ and $B$ is centered at the origin, this means
that $\hat{h}(x)\leq \hat{f}(x)$ when $x\notin B.$
\end{proof}

Even when $f\geq 0,$ it is not true that $\hat{f}=0$ implies $f=0.$ The
following characterization is important for the proof of the reverse
inequalities (specifically, of Theorem \ref{th18} later).

\begin{lemma}
\label{lm12} If $f:\Bbb{R}^{N}\rightarrow [0,\infty ]$ and $\hat{f}=0,$ then
either $f=0$ or there are $x_{0}\in \Bbb{R}^{N}$ and $0<z\leq \infty $ such
that $f(x)=0$ if $x\neq x_{0}$ and $f(x_{0})=z.$
\end{lemma}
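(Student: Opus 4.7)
The plan is to translate the hypothesis $\hat{f}=0$ into precise information about the upper level sets $F_{\xi}^{+}=\{f>\xi\}$ via the definitions (\ref{21})--(\ref{23}), and then exploit the monotonicity of these sets in $\xi$.

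First, $\hat{f}(x)=\gamma_{f}^{+}(|x|)=0$ for every $x\in\Bbb{R}^{N}$ gives $\gamma_{f}^{+}\equiv 0$ on $[0,\infty)$. I would unpack the definition (\ref{21}): since $\rho_{f}^{+}$ is nonincreasing, the set $\{\xi:\rho_{f}^{+}(\xi)=0\}$ is (if nonempty) a right-unbounded interval, and the identity $\gamma_{f}^{+}(0)=\inf\{\xi:\rho_{f}^{+}(\xi)\leq 0\}=0$ forces its left endpoint to be $0$. Hence $\rho_{f}^{+}(\xi)=0$ for every $\xi>0$, which means the enclosing ball $\overline{B}_{F_{\xi}^{+}}$ has radius $0$ for every such $\xi$. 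Consequently, for each $\xi>0$ the set $F_{\xi}^{+}$ is either empty or a single point (the empty case being covered by the paper's convention $\overline{B}_{\emptyset}:=\{0\}$).

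Next I would split into two cases. If $F_{\xi}^{+}=\emptyset$ for every $\xi>0$, then $f(x)\leq\xi$ for every $x$ and every $\xi>0$, and combined with $f\geq 0$ this gives $f\equiv 0$. Otherwise, pick some $\xi_{1}>0$ with $F_{\xi_{1}}^{+}=\{x_{0}\}$. For $0<\xi\leq\xi_{1}$, the inclusion $\{x_{0}\}=F_{\xi_{1}}^{+}\subset F_{\xi}^{+}$ combined with the singleton-or-empty property forces $F_{\xi}^{+}=\{x_{0}\}$; for $\xi>\xi_{1}$, the reverse inclusion $F_{\xi}^{+}\subset F_{\xi_{1}}^{+}=\{x_{0}\}$ gives $F_{\xi}^{+}\subset\{x_{0}\}$. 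Thus $F_{\xi}^{+}\subset\{x_{0}\}$ for every $\xi>0$, which says $f(x)\leq\xi$ for $x\neq x_{0}$ and every $\xi>0$, i.e.\ $f(x)=0$ for $x\neq x_{0}$. Setting $z:=f(x_{0})$ gives $z\geq\xi_{1}>0$, and since $f$ takes values in $[0,\infty]$ we have $z\in(0,\infty]$, yielding the stated form.

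The only step that calls for any attentiveness is the first one, namely the reduction from $\gamma_{f}^{+}\equiv 0$ to $\rho_{f}^{+}\equiv 0$ on $(0,\infty)$; this is immediate once (\ref{21}) is read together with the monotonicity of $\rho_{f}^{+}$ supplied by Lemma \ref{lm10}. Everything afterwards is a purely combinatorial consequence of the nesting of the level sets $F_{\xi}^{+}$, so I do not expect a substantial obstacle.
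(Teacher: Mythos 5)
Your proof is correct and follows essentially the same approach as the paper's: translate $\hat{f}=0$ into the statement that the enclosing balls of upper level sets have radius zero, observe this forces them to be empty or singletons, and conclude by a short case analysis. The one minor technical difference is that you work with $F_{\xi}^{+}$ for $\xi>0$ and deduce $\rho_{f}^{+}(\xi)=0$ there using only the monotonicity of $\rho_{f}^{+}$, then identify $x_{0}$ by the nesting of level sets; the paper instead asserts $\rho_{f}^{+}(0)=0$ directly and examines $F_{0}^{+}$ alone, which in principle also uses the right-continuity of $\rho_{f}^{+}$ (a point the paper glosses over). Your route sidesteps that subtlety at the cost of one extra nesting argument, so if anything it is a slightly cleaner version of the same idea.
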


\begin{proof}
By (\ref{21}) and (\ref{23}) and since $\rho _{f}^{+}$ is nonincreasing, $%
\hat{f}=0$ means $\rho _{f}^{+}(0)=0,$ i.e., that the enclosing ball $%
\overline{B}_{F_{0}^{+}}$ has radius $0,$ which happens only when $%
F_{0}^{+}=\emptyset $ or when $F_{0}^{+}=\{x_{0}\}$ is a singleton. If $%
F_{0}^{+}=\emptyset ,$ then $f=0$ since $f\geq 0.$ Suppose now that $%
F_{0}^{+}=\{x_{0}\}.$ This means that $x_{0}$ is the only point where $f$ is
positive, so that $f(x_{0})=z>0$ (possibly $\infty $) and that $f(x)\leq 0$
when $x\neq x_{0}.$ Since $f\geq 0,$ it follows that $f(x)=0$ when $x\neq
x_{0}.$
\end{proof}

\begin{remark}
\label{rm3}By the argument of the above proof, the inequality $M_{\hat{f}%
}\leq M_{f}$ in Theorem \ref{th11} (iii) can be made precise: $M_{\hat{f}%
}<M_{f}$ if and only if some upper level set of $f$ is a singleton. Put
differently, $M_{\hat{f}}=M_{f}$ if and only if there is a sequence of \emph{%
distinct} points $x_{n}$ such that $f(x_{n})\rightarrow M_{f}.$
\end{remark}

We shall also need the transform defined by 
\begin{equation}
\check{f}:=-(-f\hat{)},  \label{25}
\end{equation}
directly given by the formula 
\begin{equation}
\check{f}(x)=\gamma _{f}^{-}(|x|),  \label{26}
\end{equation}
where 
\begin{equation}
\gamma _{f}^{-}(t):=\sup \{\xi :\rho _{f}^{-}(\xi )\leq t\}  \label{27}
\end{equation}
and $\rho _{f}^{-}(\xi )$ is the radius of $\overline{B}_{F_{\xi }^{-}}$
with, of course, $F_{\xi }^{-}=\{x\in \Bbb{R}^{N}:f(x)<\xi \}.$

\begin{theorem}
\label{th13}Given $f:\Bbb{R}^{N}\rightarrow [-\infty ,\infty ],$ the
function $\check{f}$ has the following properties:\newline
(i) $\check{f}$ is measurable and $\check{f}=f$ a.e. if and only if $f(x)$
is a.e. equal to a nondecreasing function of $|x|$. If also $f(x)$ is a
right-continuous function of $|x|,$ then $\check{f}=f$.\newline
(ii) $\mu _{N}(\check{F}_{\xi }^{-})=\mu _{N}(\overline{B}_{F_{\xi }^{-}})$
for every $\xi \in [-\infty ,\infty ],$ where $\check{F}_{\xi }^{-}$ denotes
the lower $\xi $-level set of $\check{f}.$\newline
(iii) $M_{\check{f}}\leq M_{f}$ and $m_{\check{f}}\geq m_{f}$ (in
particular, $f\geq 0\Rightarrow \check{f}\geq 0$). Furthermore, $M_{\check{f}%
}=\limfunc{ess}\sup \check{f}$ and $m_{\check{f}}=\limfunc{ess}\inf \check{f}%
.\newline
$(iv) $(f+z\check{)}=\check{f}+z$ for every $z\in \Bbb{R}$ and $(f(c\cdot )%
\check{)}=\check{f}(c\cdot )$ for $c\in \Bbb{R}\backslash \{0\}.$\newline
(v) $(cf\check{)}=c\check{f}$ for every $c\geq 0$ (and $(cf\check{)}=c\hat{f}
$ for every $c<0$).\newline
(vi) If $h:\Bbb{R}^{N}\rightarrow [-\infty ,\infty ]$ and $h\leq f,$ then $%
\check{h}\leq \check{f}.$\newline
(vii) If $h:\Bbb{R}^{N}\rightarrow [-\infty ,\infty ]$ is bounded above on
bounded subsets, $\lim_{|x|\rightarrow \infty }h(x)=\infty $ and $h(x)\leq
f(x)$ for $|x|$ large enough, then $\check{h}(x)\leq \check{f}(x)$ for $|x|$
large enough. Furthermore, if $h(x)$ is a strictly increasing function of $%
|x|$ and if $h(x)\leq f(x)$ when $x\notin B$ for some open ball $B$ centered
at the origin, then $\check{h}(x)$ ($=h(x)$ by (i)) $\leq \check{f}(x)$ for
every $x\notin B.$\newline
(viii) If $f\geq 0,$ then $(f^{-1}\hat{)}=(\check{f})^{-1}.$\newline
(ix) $(f_{+}\check{)}=\check{f}_{+}.$
\end{theorem}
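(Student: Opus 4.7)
The plan is to derive parts (i)--(vii) from the corresponding parts of Theorem \ref{th11} applied to $-f$, using the defining relation $\check{f} = -(-f\hat{)}$ of (\ref{25}); parts (viii) and (ix) require a direct level-set computation.

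The translation for (i)--(vii) rests on a handful of elementary facts: $f(x)$ is a nondecreasing (resp.\ right-continuous) function of $|x|$ if and only if $-f(x)$ is a nonincreasing (resp.\ right-continuous) function of $|x|$; the lower level set satisfies $F_\xi^- = (-f)_{-\xi}^+$; and suprema, infima, essential suprema, and essential infima all flip sign under $g \mapsto -g$. With these, (i), (iii), and (vi) follow at once from their counterparts in Theorem \ref{th11} applied to $-f$. Part (ii) uses $\check{F}_\xi^- = ((-f\hat{)})_{-\xi}^+$ combined with Theorem \ref{th11}(ii). Parts (iv) and (v) reduce to the algebraic identities $-(f+z) = (-f)+(-z)$, $(-f)(c\cdot) = -f(c\cdot)$, and $-(cf) = c(-f)$ for $c \geq 0$; the $c<0$ addendum in (v) comes from rewriting $-cf = (-c)f$ with $-c > 0$ and invoking Theorem \ref{th11}(v) with the positive scalar $-c$, which yields $(cf\check{)} = -((-c)\hat{f}) = c\hat{f}$. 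For (vii), set $\tilde f := -h$ and $\tilde h := -f$: the hypotheses (bounded above on bounded sets, $\lim = +\infty$, $h \leq f$) translate precisely to those of Theorem \ref{th11}(vii) for $\tilde f, \tilde h$ (bounded below, $\lim = -\infty$, $\tilde h \leq \tilde f$), and one further negation yields the conclusion; the strictly-increasing addendum is handled identically.

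For (ix), compute directly: $(f_+)_\xi^- = \emptyset$ (so $\rho_{f_+}^-(\xi) = 0$) for $\xi \leq 0$, and $(f_+)_\xi^- = F_\xi^-$ (so $\rho_{f_+}^-(\xi) = \rho_f^-(\xi)$) for $\xi > 0$. Substituting into (\ref{27}) yields $\gamma_{f_+}^-(t) = \max\{0, \gamma_f^-(t)\}$, which by (\ref{26}) is exactly $\check{f}_+$ evaluated at any $x$ with $|x| = t$.

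The main obstacle is (viii). For $f \geq 0$ and $\xi > 0$, the identity $\{f^{-1} > \xi\} = \{f < 1/\xi\}$ gives $\rho_{f^{-1}}^+(\xi) = \rho_f^-(1/\xi)$, while $\rho_{f^{-1}}^+(\xi) = \infty$ for $\xi < 0$ (since $f^{-1} \geq 0$), so the values $\xi \leq 0$ do not enter the infimum in (\ref{21}) for finite $t$. The change of variable $\eta = 1/\xi$, compared with (\ref{27}), then yields $\gamma_{f^{-1}}^+(t) = 1/\gamma_f^-(t)$ with the conventions $1/0 = \infty$ and $1/\infty = 0$. The delicate point is tracking the boundary cases $\xi = 0$ and $\xi = \infty$ (corresponding to the sets where $f$ vanishes or is infinite) so that these conventions match on both sides; once this is in hand, evaluating at $|x|$ gives $(f^{-1}\hat{)}(x) = 1/\check{f}(x) = (\check{f})^{-1}(x)$.
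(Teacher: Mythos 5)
Your strategy matches the paper's throughout: parts (i)--(vii) are obtained by applying Theorem~\ref{th11} to $-f$ via the definition $\check{f}=-(-f\hat{)}$, and your computation for (ix) is word for word the same split of $\rho_{f_+}^-$ at $\xi=0$ leading to $\gamma_{f_+}^-=(\gamma_f^-)_+$. The one place you stop short is (viii): you correctly identify $\rho_{f^{-1}}^+(\xi)=\rho_f^-(\xi^{-1})$ and note that $\rho_{f^{-1}}^+(\xi)=\infty$ for $\xi<0$, so that $\gamma_{f^{-1}}^+(t)=\inf\{\xi\geq0:\rho_f^-(\xi^{-1})\leq t\}$, but then you flag ``tracking the boundary cases $\xi=0$ and $\xi=\infty$'' as a delicate unresolved point. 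The paper closes precisely this gap with a single set-theoretic observation: for any $S\subset[0,\infty]$ one has $\inf(S^{-1})=(\sup S)^{-1}$ under the conventions $1/0=\infty$, $1/\infty=0$. Applying this to $S=\{\eta\geq0:\rho_f^-(\eta)\leq t\}$ gives $\inf\{\eta^{-1}:\eta\geq 0,\ \rho_f^-(\eta)\leq t\}=(\gamma_f^-(t))^{-1}$, which is exactly $\gamma_{f^{-1}}^+(t)$, with no case analysis required. Inserting that identity would complete your proof of (viii); otherwise the proposal is correct and essentially identical to the paper's argument.
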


\begin{proof}
Parts (i) to (vii) follow at once from (\ref{25}) and from the corresponding
properties in Theorem \ref{th11}.

(viii) First, note that if $f\geq 0$ and $\xi <0,$ then $\rho _{f}^{+}(\xi
)=\infty $ and $\rho _{f}^{-}(\xi )=0.$ Hence, by (\ref{21}) and (\ref{27}), 
$\gamma _{f}^{+}(t):=\inf \{\xi \geq 0:\rho _{f}^{+}(\xi )\leq t\}$ and $%
\gamma _{f}^{-}(t):=\sup \{\xi \geq 0:\rho _{f}^{-}(\xi )\leq t\}\geq 0.$
Upon replacing $f$ with $f^{-1}$ in the former formula, we get (since
passing from $f$ to $f^{-1}$ changes upper level sets into lower ones) $%
\gamma _{f^{-1}}^{+}(t):=\inf \{\xi \geq 0:\rho _{f}^{-}(\xi ^{-1})\leq t\}.$
On the other hand, since $\inf (S^{-1})=(\sup S)^{-1}$ for every subset $%
S\subset [0,\infty ],$ we have $(\gamma _{f}^{-}(t))^{-1}=\inf \{\xi
^{-1}\geq 0:\rho _{f}^{-}(\xi )\leq t\}=\inf \{\xi \geq 0:\rho _{f}^{-}(\xi
^{-1})\leq t\}.$ This shows that $\gamma _{f^{-1}}^{+}=(\gamma
_{f}^{-})^{-1} $ and the result follows from (\ref{23}) and (\ref{26}).

(ix) The lower level sets $\{x\in \Bbb{R}^{N}:f_{+}(x)<\xi \}$ are empty if $%
\xi \leq 0$ and coincide with $F_{\xi }^{-}$ if $\xi >0.$ Thus, $\rho
_{f_{+}}^{-}(\xi )=0$ if $\xi \leq 0$ and $\rho _{f_{+}}^{-}(\xi )=\rho
_{f}^{-}(\xi )$ if $\xi >0.$ Therefore, by (\ref{27}), 
\begin{multline}
\gamma _{f_{+}}^{-}(t)=\max \{\sup \{\xi \leq 0:\rho _{f_{+}}^{-}(\xi )\leq
t\},\sup \{\xi >0:\rho _{f_{+}}^{-}(\xi )\leq t\}\}=  \label{28} \\
\max \{0,\sup \{\xi >0:\rho _{f}^{-}(\xi )\leq t\}\}.
\end{multline}

Suppose first that $\gamma _{f}^{-}(t)=\sup \{\xi :\rho _{f}^{-}(\xi )\leq
t\}\leq 0.$ Accordingly, $\{\xi >0:\rho _{f}^{-}(\xi )\leq t\}=\emptyset $
and so $\sup \{\xi >0:\rho _{f}^{-}(\xi )\leq t\}=-\infty .$ By (\ref{28}), $%
\gamma _{f_{+}}^{-}(t)=\max \{0,-\infty \}=0=(\gamma _{f}^{-})_{+}(t).$
Suppose next that $\gamma _{f}^{-}(t)=\sup \{\xi :\rho _{f}^{-}(\xi )\leq
t\}>0,$ so that $\gamma _{f}^{-}(t)=\sup \{\xi >0:\rho _{f}^{-}(\xi )\leq
t\}.$ By (\ref{28}), $\gamma _{f_{+}}^{-}(t)=\max \{0,\gamma
_{f}^{-}(t)\}=(\gamma _{f}^{-})_{+}(t).$ This shows that $\gamma
_{f_{+}}^{-}=(\gamma _{f}^{-})_{+},$ whence $(f_{+}\check{)}=\check{f}_{+}$
by (\ref{26}).
\end{proof}

\begin{remark}
\label{rm4} By (\ref{25}) and Remark \ref{rm3}, $m_{\hat{f}}=m_{f}$ if and
only if there is a sequence of \emph{distinct} points $x_{n}$ such that $%
f(x_{n})\rightarrow m_{f}.$
\end{remark}

\section{Reverse inequalities for $f\barwedge g$\label{reverse1}}

We begin with the (trivial) converse of the Brunn-Minkowski inequality for
Euclidean balls.

\begin{lemma}
\label{lm14}If $B_{1}$ and $B_{2}$ are Euclidean balls in $\Bbb{R}^{N},$
then $\mu _{N}(B_{1}+B_{2})\leq 2^{N-1}(\mu _{N}(B_{1})+\mu _{N}(B_{2})).$
\end{lemma}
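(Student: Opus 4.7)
The plan is to reduce the claim to an elementary inequality about $N$th powers, using the fact that Minkowski sums of balls behave in a completely transparent way. First I would observe that if $B_1 = \overline{B}(x_1, r_1)$ and $B_2 = \overline{B}(x_2, r_2)$ (allowing $r_i = 0$ to accommodate the convention $\overline{B}_\emptyset = \{0\}$), then by the triangle inequality $B_1 + B_2 = \overline{B}(x_1 + x_2, r_1 + r_2)$. The translations are irrelevant for Lebesgue measure, so writing $\omega_N := \mu_N(\overline{B}(0,1))$ I get
\begin{equation*}
\mu_N(B_1) = \omega_N r_1^N, \qquad \mu_N(B_2) = \omega_N r_2^N, \qquad \mu_N(B_1 + B_2) = \omega_N (r_1 + r_2)^N.
\end{equation*}

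After dividing by $\omega_N$, the desired inequality becomes
\begin{equation*}
(r_1 + r_2)^N \leq 2^{N-1}(r_1^N + r_2^N),
\end{equation*}
and this is exactly the convexity of the function $t \mapsto t^N$ on $[0,\infty)$ applied at the midpoint: indeed, $\bigl((r_1+r_2)/2\bigr)^N \leq (r_1^N + r_2^N)/2$, and multiplying through by $2^N$ gives what we want. (The inequality is trivial when $N = 1$ since it becomes equality.)

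There is essentially no obstacle here; the only thing to be careful about is the degenerate case where $B_1$ or $B_2$ is empty, in which case $B_1 + B_2 = \emptyset$ by the convention in Section~\ref{background}, both sides vanish, and the inequality holds trivially.
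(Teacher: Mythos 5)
Your proof is correct and follows essentially the same route as the paper: identify $B_1+B_2$ as a ball of radius $r_1+r_2$, then reduce to the elementary power inequality $(r_1+r_2)^N \leq 2^{N-1}(r_1^N+r_2^N)$; you merely spell out the convexity argument the paper leaves as ``readily checked.'' (Your final remark about empty $B_i$ is moot, since a Euclidean ball is never empty---even radius $0$ gives a singleton, and indeed $\overline{B}_\emptyset=\{0\}$ is such a singleton, not the empty set.)
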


\begin{proof}
Call $r_{i}$ the radius of $B_{i},i=1,2.$ It is readily checked that $%
B_{1}+B_{2}$ is a ball with radius $r_{1}+r_{2}$ and the inequality simply
follows from $(r_{1}+r_{2})^{N}\leq 2^{N-1}(r_{1}^{N}+r_{2}^{N}).$
\end{proof}

In the next lemma, $M_{f}=M_{g}$ is \emph{not} needed (compare with Lemma 
\ref{lm4}).

\begin{lemma}
\label{lm15}If $f,g:\Bbb{R}^{N}\rightarrow [0,\infty ]$ are Borel
measurable, then 
\begin{equation*}
\int_{\Bbb{R}^{N}}f\barwedge g\leq 2^{N-1}\left( \int_{\Bbb{R}^{N}}\hat{f}%
+\int_{\Bbb{R}^{N}}\hat{g}\right) .
\end{equation*}
\end{lemma}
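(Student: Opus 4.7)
The plan is to execute the layer cake representation on both sides of the desired inequality and use Lemma \ref{lm14} slice by slice, with the enclosing ball construction and Theorem \ref{th11}(ii) bridging between the two sides. Concretely, since $f,g\geq 0$ are Borel measurable, Lemma \ref{lm2} gives that $f\barwedge g$ is measurable and $f\barwedge g\geq 0$, so the layer cake formula yields
\[
\int_{\mathbb{R}^N} f\barwedge g = \int_0^\infty \mu_N(W_\xi^+)\,d\xi,
\]
where, by (\ref{6}), $W_\xi^+ = F_\xi^+ + G_\xi^+$.

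Next I would control $\mu_N(W_\xi^+)$ pointwise in $\xi$. For each $\xi\geq 0$, the inclusions $F_\xi^+ \subset \overline{B}_{F_\xi^+}$ and $G_\xi^+ \subset \overline{B}_{G_\xi^+}$ give $W_\xi^+ \subset \overline{B}_{F_\xi^+} + \overline{B}_{G_\xi^+}$. If $F_\xi^+$ or $G_\xi^+$ is unbounded, then the corresponding enclosing ball equals $\mathbb{R}^N$ and the bound $\mu_N(W_\xi^+) \leq 2^{N-1}(\mu_N(\overline{B}_{F_\xi^+}) + \mu_N(\overline{B}_{G_\xi^+}))$ is trivial. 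Otherwise, $\overline{B}_{F_\xi^+}$ and $\overline{B}_{G_\xi^+}$ are genuine closed balls (or $\{0\}$), so Lemma \ref{lm14} applies directly and gives exactly the same estimate with constant $2^{N-1}$.

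By Theorem \ref{th11}(ii), $\mu_N(\overline{B}_{F_\xi^+}) = \mu_N(\hat{F}_\xi^+)$ and similarly for $g$. Since $f,g\geq 0$, Theorem \ref{th11}(iii) ensures $\hat{f},\hat{g}\geq 0$, and Theorem \ref{th11}(i) ensures measurability, so the layer cake formula applies on the right-hand side as well. Combining everything,
\[
\int_{\mathbb{R}^N} f\barwedge g \leq 2^{N-1}\int_0^\infty \bigl(\mu_N(\hat{F}_\xi^+) + \mu_N(\hat{G}_\xi^+)\bigr)\,d\xi = 2^{N-1}\left(\int_{\mathbb{R}^N}\hat{f} + \int_{\mathbb{R}^N}\hat{g}\right),
\]
which is the desired inequality.

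There is no serious obstacle: the argument is a direct marriage of the slice-wise enclosing ball majorization (where Lemma \ref{lm14} replaces the missing Brunn--Minkowski reverse) with Theorem \ref{th11}(ii), which was precisely designed to translate enclosing ball volumes back into integrals of $\hat{f}$ and $\hat{g}$. The only minor points to check are the unbounded case (handled trivially as above) and the measurability of the sum of two closed balls, which follows from Lemma \ref{lm1} (or from the fact that such a sum is itself a closed ball).
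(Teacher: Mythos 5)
Your proof is correct and follows essentially the same route as the paper: layer cake representation, the inclusion $W_\xi^+\subset\overline{B}_{F_\xi^+}+\overline{B}_{G_\xi^+}$, Lemma \ref{lm14} slice by slice, Theorem \ref{th11}(ii) to translate enclosing ball volumes into level set volumes of $\hat f,\hat g$, and the layer cake formula again. The extra care you take with the unbounded and degenerate cases is a harmless elaboration of what the paper leaves implicit.
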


\begin{proof}
By (\ref{5}) and (\ref{6}) and since $f\barwedge g\geq 0$ is measurable
(Lemma \ref{lm2}), it follows that $\int_{\Bbb{R}^{N}}f\barwedge
g=\int_{0}^{\infty }\mu _{N}(F_{\xi }^{+}+G_{\xi }^{+})d\xi .$ Next, $F_{\xi
}^{+}+G_{\xi }^{+}\subset \overline{B}_{F_{\xi }^{+}}+\overline{B}_{G_{\xi
}^{+}}$ and so $\mu _{N}(F_{\xi }^{+}+G_{\xi }^{+})\leq \mu _{N}(\overline{B}
_{F_{\xi }^{+}}+\overline{B}_{G_{\xi }^{+}})\leq 2^{N-1}(\mu _{N}(\overline{B%
}_{F_{\xi }^{+}})+\mu _{N}(\overline{B}_{G_{\xi }^{+}})),$ where Lemma \ref
{lm14} was used. This yields $\int_{\Bbb{R}^{N}}f\barwedge g\leq
2^{N-1}\left( \int_{0}^{\infty }\mu _{N}(\overline{B}_{F_{\xi }^{+}})d\xi
+\int_{0}^{\infty }\mu _{N}(\overline{B}_{G_{\xi }^{+}})d\xi \right) .$ By
Theorem \ref{th11} (ii), the right-hand side is $2^{N-1}\left(
\int_{0}^{\infty }\mu _{N}(\hat{F}_{\xi }^{+}))d\xi +\int_{0}^{\infty }\mu
_{N}(\hat{G}_{\xi }^{+}))d\xi \right) ,$ which in turn equals $2^{N-1}\left(
\int_{\Bbb{R}^{N}}\hat{f}+\int_{\Bbb{R}^{N}}\hat{g}\right) $ because $\hat{f}
,\hat{g}\geq 0$ by Theorem \ref{th11} (iii).
\end{proof}

No variant of Lemma \ref{lm15} is true if $\hat{f}$ (or $\hat{g}$) is
replaced with $f$ (or $g$):

\begin{example}
\label{ex1}With $N=1,$ let $0<f\leq 1$ be integrable with $f(n)=1$ for every 
$n\in \Bbb{Z}$ and let $g=\chi _{(-1,1)}$ ($=\hat{g}$). Then, $f\barwedge
g=1,$ whence $\int_{\Bbb{R}}f\barwedge g=\infty $ but $f,g\in L^{1}.$
\end{example}

If $N=1,$ $f,g\geq 0$ are even and nonincreasing on $[0,\infty )$ and $%
M_{f}=M_{g}.$ Then, $\int_{\Bbb{R}}f\barwedge g=\int_{\Bbb{R}}f+\int_{\Bbb{R}%
}g$ by Theorem \ref{th11} (i) and Lemmas \ref{lm4} and \ref{lm15}. Is there
a different proof ? (If $f=g,$ this follows from $(f\barwedge f)(x)=f(x/2).$)

To go further, we need a simple property of Young functions.

\begin{lemma}
\label{lm16}If $\phi $ is a Young function and $f:\Bbb{R}^{N}\rightarrow [0,%
\infty ],$ then $(\phi (f)\hat{)}=\phi (\hat{f}).$
\end{lemma}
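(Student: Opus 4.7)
The plan is to reduce the claim to comparing enclosing-ball radii of upper level sets, by showing two intermediate equalities: (A) $(\phi(f)\hat{)}=(\phi(\hat{f})\hat{)}$ and (B) $(\phi(\hat{f})\hat{)}=\phi(\hat{f})$; chained together, these yield the lemma.

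For (A), I would introduce the generalized right-inverse $\psi:[0,\infty]\to[0,\infty]$ of $\phi$ by $\psi(\eta):=\sup\{\tau\ge 0:\phi(\tau)\le\eta\}$, which is nonempty for $\eta\ge 0$ since $\phi(0)=0$. The monotonicity and, crucially, the left-continuity of $\phi$ then yield the equivalence $\phi(\tau)>\eta\iff\tau>\psi(\eta)$ for all $\tau,\eta\ge 0$: the nontrivial direction is handled by choosing $\tau_n\nearrow\psi(\eta)$ with $\phi(\tau_n)\le\eta$ and applying left-continuity to deduce $\phi(\psi(\eta))\le\eta$, contradicting $\phi(\tau)>\eta$ whenever $\tau\le\psi(\eta)$. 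Applied pointwise to $f$ and to $\hat f$ (both nonnegative), this gives the level-set identities $\{x:\phi(f(x))>\eta\}=F_{\psi(\eta)}^{+}$ and $\{x:\phi(\hat f(x))>\eta\}=\hat F_{\psi(\eta)}^{+}$. By Theorem~\ref{th11}(ii) (or rather by its proof), $\hat F_{\psi(\eta)}^{+}$ is the open ball centered at the origin with radius $\rho_f^{+}(\psi(\eta))$, whose enclosing ball is the closed ball of the same radius; hence
$\rho_{\phi(f)}^{+}(\eta)=\rho_f^{+}(\psi(\eta))=\rho_{\phi(\hat f)}^{+}(\eta)$.
By (\ref{21}) and (\ref{23}), this forces $(\phi(f)\hat{)}=(\phi(\hat f)\hat{)}$.

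For (B), I would verify that $\phi(\hat f)(x)=\phi(\gamma_f^{+}(|x|))$ is a right-continuous nonincreasing function of $|x|$, so that Theorem~\ref{th11}(i) applies and gives $(\phi(\hat f)\hat{)}=\phi(\hat f)$. Monotonicity is immediate from $\phi$ nondecreasing and $\gamma_f^{+}$ nonincreasing. Right-continuity follows since $t_n\searrow t$ forces $\gamma_f^{+}(t_n)\nearrow\gamma_f^{+}(t)$ (monotonicity plus right-continuity of $\gamma_f^{+}$), after which left-continuity of $\phi$ gives $\phi(\gamma_f^{+}(t_n))\nearrow\phi(\gamma_f^{+}(t))$.

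The main obstacle is the careful handling of the generalized inverse $\psi$ at its jump points, and in particular establishing the equivalence $\phi(\tau)>\eta\iff\tau>\psi(\eta)$ at the boundary value $\tau=\psi(\eta)$. This equivalence fails without left-continuity of $\phi$, which is precisely why left-continuity is built into the definition of a Young function; it is the one place in the argument where one cannot simply appeal to monotonicity. The only remaining bookkeeping concerns the edge case $\eta<0$, where $\{\phi(f)>\eta\}=\mathbb{R}^N$ trivially and both sides of the radii equality are $\infty$, so these values of $\eta$ never contribute to the infimum in (\ref{21}) and can be safely ignored.
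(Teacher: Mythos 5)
Your proof is correct and takes a genuinely different, and in fact more complete, route than the paper's. The paper restricts attention to the case $0<\phi<\infty$ on $(0,\infty)$, so that $\phi$ has a two-sided continuous inverse $\psi$; it then derives $\rho_{\phi(f)}^{+}(\xi)=\rho_f^{+}(\psi(\xi))$ and pulls $\phi$ out of the infimum in (\ref{21}) directly, relying on the invertibility and continuity of $\phi$ to justify the substitution $\xi\leftrightarrow\phi(\eta)$. The paper explicitly declines to treat the general case, saying it ``involves extra technicalities.'' Your approach supplies precisely those missing technicalities: the generalized right-inverse $\psi(\eta)=\sup\{\tau\ge 0:\phi(\tau)\le\eta\}$ together with left-continuity of $\phi$ gives the level-set identity $\{\phi(f)>\eta\}=F_{\psi(\eta)}^{+}$ without assuming $\phi$ is invertible, and your two-step factorization through $(\phi(\hat f)\hat{)}$---comparing radii for (A), then invoking the radial nonincreasing right-continuity of $\phi(\hat f)$ and Theorem~\ref{th11}(i) for (B)---avoids the need to commute $\phi$ past an infimum entirely. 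The trade-off is a somewhat longer argument, but it applies uniformly to all Young functions, including those with a flat piece near $0$ or a jump to $+\infty$, whereas the paper's short proof as written does not. Your observation that left-continuity is exactly the hypothesis that makes the boundary case $\tau=\psi(\eta)$ work is the right diagnosis of where the argument has genuine content.
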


\begin{proof}
For brevity, we only give the proof in the more important case when $0<\phi
<\infty $ on $(0,\infty ),$ so that $\phi $ is continuous on $[0,\infty ]$
and has an inverse $\psi .$ The general case involves extra technicalities
that lengthen the exposition. Recall that $\rho _{f}^{+}(\xi )$ denotes the
radius of $\overline{B}_{F_{\xi }^{+}}.$

Since $\phi (f)\geq 0,$ it follows that $\rho _{\phi (f)}^{+}(\xi )=\infty $
if $\xi <0.$ If $\xi \geq 0,$ then $\{x\in \Bbb{R}^{N}:\phi (f(x))>\xi
\}=\{x\in \Bbb{R}^{N}:f(x)>\psi (\xi )\}=F_{\psi (\xi )}^{+},$ so that $\rho
_{\phi (f)}^{+}(\xi )=\rho _{f}^{+}(\psi (\xi )).$ Thus, by (\ref{21}) and (%
\ref{23}), $(\phi (f)\hat{)}(x)=\inf \{\xi \geq 0:\rho _{f}^{+}(\psi (\xi
))\leq |x|\}=\inf \{\phi (\eta ):\rho _{f}^{+}(\eta )\leq |x|\}=\phi (\inf
\{\eta \geq 0:\rho _{f}^{+}(\eta )\leq |x|\}).$ Now, $\inf \{\eta \geq
0:\rho _{f}^{+}(\eta )\leq |x|\}=\hat{f}(x)$ by (\ref{21}) and (\ref{23})
because $f\geq 0$ implies $\rho _{f}^{+}(\eta )=\infty $ if $\eta <0.$
\end{proof}

\begin{lemma}
\label{lm17}If $\phi $ is a Young function and $f:\Bbb{R}^{N}\rightarrow [0,%
\infty ]$ is measurable, then $||f||_{\phi }\leq ||\hat{f}||_{\phi }.$
\end{lemma}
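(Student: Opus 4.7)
The plan is to combine the layer-cake representation of the integral with the ``composition'' identity of Lemma \ref{lm16}.

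First, I would establish the auxiliary inequality
\begin{equation*}
\int_{\Bbb{R}^{N}} h \leq \int_{\Bbb{R}^{N}} \hat{h}
\end{equation*}
for every measurable $h:\Bbb{R}^{N}\rightarrow [0,\infty]$. By Cavalieri's principle and $h \geq 0$, we have $\int_{\Bbb{R}^N} h = \int_0^\infty \mu_N(H_\xi^+)\,d\xi$, where $H_\xi^+ := \{x : h(x) > \xi\}$ is measurable. Since $H_\xi^+ \subset \overline{B}_{H_\xi^+}$, monotonicity of the Lebesgue measure gives $\mu_N(H_\xi^+) \leq \mu_N(\overline{B}_{H_\xi^+})$, which in turn equals $\mu_N(\hat{H}_\xi^+)$ by Theorem \ref{th11}(ii). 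Integrating in $\xi$ and using $\hat{h}\geq 0$ (Theorem \ref{th11}(iii)) together with Cavalieri for $\hat{h}$ yields the auxiliary bound.

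Next, I would apply this fact to $h := \phi(r^{-1} f)$ for an arbitrary $r > 0$; note that $h$ is measurable by Remark \ref{rm1} and nonnegative because $\phi \geq 0$. By Theorem \ref{th11}(v), $(r^{-1}f\hat{\,}) = r^{-1}\hat{f}$, and then Lemma \ref{lm16} applied to the nonnegative function $r^{-1}f$ gives
\begin{equation*}
\hat{h} \;=\; \bigl(\phi(r^{-1}f)\hat{\,}\bigr) \;=\; \phi\bigl((r^{-1}f\hat{\,})\bigr) \;=\; \phi(r^{-1}\hat{f}).
\end{equation*}
Substituting into the auxiliary inequality, we obtain
\begin{equation*}
\int_{\Bbb{R}^{N}} \phi(r^{-1} f) \;\leq\; \int_{\Bbb{R}^{N}} \phi(r^{-1} \hat{f}).
\end{equation*}

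Finally, by the definition (\ref{13}) of the Luxemburg norm, any $r > 0$ for which the right-hand side is $\leq 1$ also makes the left-hand side $\leq 1$; taking the infimum over such $r$ yields $||f||_{\phi} \leq ||\hat{f}||_{\phi}$. I do not foresee any real obstacle here, since all the ingredients are already in place; the one point to notice is that the passage $X \mapsto \overline{B}_X$ can only increase measure — the monotonicity $X \subset \overline{B}_X$ dual to the Brunn–Minkowski-type estimate in Lemma \ref{lm15} — and it is precisely this monotonicity, channeled through Lemma \ref{lm16}, that drives the estimate at the level of Luxemburg norms.
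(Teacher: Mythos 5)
Your proof is correct and follows essentially the same route as the paper's: both establish $\int h \leq \int \hat{h}$ via the layer-cake formula, $H_\xi^+ \subset \overline{B}_{H_\xi^+}$, and Theorem~\ref{th11}(ii), then combine this with Lemma~\ref{lm16}, Theorem~\ref{th11}(v), and the definition (\ref{13}) of the Luxemburg norm. The only difference is purely organizational (you apply the integral inequality once to $\phi(r^{-1}f)$, whereas the paper first substitutes $\phi(f)$ and then rescales).
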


\begin{proof}
Since $f\geq 0$ implies $\hat{f}\geq 0,$ it follows from Theorem \ref{th11}
(ii) and from $\mu _{N}(F_{\xi }^{+})\leq \mu _{N}(\overline{B}_{F_{\xi
}^{+}})$ that $\int_{\Bbb{R}^{N}}f=\int_{0}^{\infty }\mu _{N}(F_{\xi
}^{+})d\xi \leq \int_{0}^{\infty }\mu _{N}(\hat{F}_{\xi }^{+})d\xi =\int_{%
\Bbb{R}^{N}}\hat{f}.$ Upon replacing $f$ by $\phi (f)$ in this inequality,
it follows from Lemma \ref{lm16} that $\int_{\Bbb{R}^{N}}\phi (f)\leq \int_{%
\Bbb{R}^{N}}\phi (\hat{f}).$ Now, replace $f$ with $r^{-1}f$ where $r>0$ and
use Theorem \ref{th11} (v) to get $\int_{\Bbb{R}^{N}}\phi (r^{-1}f)\leq
\int_{\Bbb{R}^{N}}\phi (r^{-1}\hat{f})$ for every $r>0.$ By (\ref{13}), this
implies $||f||_{\phi }\leq ||\hat{f}||_{\phi }.$
\end{proof}

\begin{theorem}
\label{th18} If $f,g:\Bbb{R}^{N}\rightarrow [0,\infty ]$ are Borel
measurable and $\phi $ is a Young function, then $\phi (f),\phi (g)$ and $%
\phi (f\barwedge g)$ are measurable and nonnegative and\newline
\begin{equation}
||f\barwedge g||_{\phi }\leq 2^{N-1}(||\hat{f}||_{\phi }+||\hat{g}||_{\phi
}).  \label{29}
\end{equation}
\newline
\end{theorem}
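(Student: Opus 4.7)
The plan is to mirror the proof of Lemma~\ref{lm6}, replacing Lemma~\ref{lm4} by Lemma~\ref{lm15} and Lemma~\ref{lm5} by Lemma~\ref{lm16}, and then absorbing the constant $2^{N-1}$ at the Luxemburg-norm level by invoking the convexity of $\phi$.

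First, I would dispose of measurability exactly as at the start of the proof of Lemma~\ref{lm6}: Lemma~\ref{lm2} gives that $f\barwedge g$ is measurable, and since $\phi$ is monotone (hence Borel), Remark~\ref{rm1} yields measurability of $\phi(f)$, $\phi(g)$, and $\phi(f\barwedge g)$. The same computation as in Lemma~\ref{lm6} (monotonicity of $\phi$ together with Lemma~\ref{lm5}) gives the pointwise identity $\phi(f\barwedge g)=\phi(f)\barwedge\phi(g)$.

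Second, I apply Lemma~\ref{lm15} to the Borel-measurable, nonnegative functions $\phi(f)$ and $\phi(g)$ and invoke Lemma~\ref{lm16} to rewrite the right-hand side, obtaining
\begin{equation*}
\int_{\Bbb{R}^{N}}\phi(f\barwedge g)\leq 2^{N-1}\left(\int_{\Bbb{R}^{N}}\phi(\hat{f})+\int_{\Bbb{R}^{N}}\phi(\hat{g})\right).
\end{equation*}
Rescaling $f,g$ to $r^{-1}f,r^{-1}g$ (and using $(r^{-1}f)\barwedge(r^{-1}g)=r^{-1}(f\barwedge g)$ together with Theorem~\ref{th11}(v) to pull the scalar through the hat) turns this into the same inequality with $f\barwedge g$, $\hat{f}$, $\hat{g}$ each replaced by $r^{-1}(f\barwedge g)$, $r^{-1}\hat{f}$, $r^{-1}\hat{g}$.

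To finish, let $a:=||\hat{f}||_{\phi}$ and $b:=||\hat{g}||_{\phi}$. If either is infinite, (\ref{29}) is trivial, so assume $a,b<\infty$; the degenerate case where one of them vanishes is handled by replacing it with $\delta>0$ and letting $\delta\to 0^{+}$ at the end. With $a,b>0$, set $r:=2^{N-1}(a+b)$. Since $a/r\leq 2^{1-N}\leq 1$ and $\phi$ is convex with $\phi(0)=0$ (so $\phi(\lambda t)\leq\lambda\phi(t)$ for $\lambda\in[0,1]$), (\ref{15}) gives
\begin{equation*}
\int_{\Bbb{R}^{N}}\phi(r^{-1}\hat{f})=\int_{\Bbb{R}^{N}}\phi\!\left(\tfrac{a}{r}\,a^{-1}\hat{f}\right)\leq\tfrac{a}{r}\int_{\Bbb{R}^{N}}\phi(a^{-1}\hat{f})\leq\tfrac{a}{r},
\end{equation*}
and similarly for $\hat{g}$. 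Summing the two bounds and multiplying by $2^{N-1}$ yields $\int_{\Bbb{R}^{N}}\phi(r^{-1}(f\barwedge g))\leq 2^{N-1}(a+b)/r=1$, so $||f\barwedge g||_{\phi}\leq r=2^{N-1}(a+b)$, which is (\ref{29}). The main (mild) obstacle lies precisely at this step: unlike in Lemma~\ref{lm6}, one cannot simply set $r=||f\barwedge g||_{\phi}$ and read off the conclusion, and the convexity of $\phi$ is essential to convert the additive integral bound into an additive Luxemburg-norm bound with the correct constant.
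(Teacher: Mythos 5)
Your proposal is correct and follows the same overall skeleton as the paper's proof: derive the pointwise identity $\phi(f\barwedge g)=\phi(f)\barwedge\phi(g)$ as in Lemma~\ref{lm6}, combine Lemma~\ref{lm15} with Lemma~\ref{lm16} to obtain the integral inequality $\int_{\Bbb{R}^N}\phi(f\barwedge g)\leq 2^{N-1}\bigl(\int_{\Bbb{R}^N}\phi(\hat{f})+\int_{\Bbb{R}^N}\phi(\hat{g})\bigr)$, rescale, and then use convexity of $\phi$ with $\phi(0)=0$ to absorb $2^{N-1}$ and pass to Luxemburg norms. Your convexity step (fixing $r=2^{N-1}(a+b)$ up front and bounding $\int\phi(r^{-1}\hat f)\leq a/r$ directly) is a small rearrangement of the paper's bookkeeping (which fixes $r=a+b$, introduces $\lambda=a/r$ and applies $\phi(\mu\tau)\leq\mu\phi(\tau)$ with $\mu=2^{1-N},\lambda,1-\lambda$); the two are equivalent.

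The one genuine difference is the degenerate case $\|\hat{f}\|_{\phi}=0$ (or $\|\hat{g}\|_{\phi}=0$). The paper notes that $\hat f=0$ a.e.\ together with right-continuity and monotonicity forces $\hat f=0$ identically, and then invokes Lemma~\ref{lm12} to pin down the structure of $f$ (either $f=0$ or $f$ is nonzero at exactly one point), from which $\|f\barwedge g\|_{\phi}\leq\|\hat g\|_{\phi}$ follows via a translation argument and Lemma~\ref{lm17}. You instead propose a $\delta$-limiting argument: take $r=2^{N-1}(\delta+b)$, observe $\int\phi(r^{-1}\hat f)\leq\delta/r$ (using that $\delta>\|\hat f\|_{\phi}$ gives $\int\phi(\delta^{-1}\hat f)\leq 1$ by the definition (\ref{13}) and monotonicity of $\phi$, equivalently that $\hat f=0$ a.e.), conclude $\|f\barwedge g\|_{\phi}\leq 2^{N-1}(\delta+b)$, and let $\delta\to 0^{+}$. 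Once spelled out, this is correct and bypasses Lemma~\ref{lm12} entirely, at the cost of losing the sharper bound $\|f\barwedge g\|_{\phi}\leq\|\hat g\|_{\phi}$ the paper obtains in that case (which, however, is not needed for (\ref{29})). It would be worth stating explicitly that the $\delta$-replacement rests on $\int\phi(\delta^{-1}\hat f)\leq 1$ for every $\delta>\|\hat f\|_{\phi}$, since (\ref{15}) as quoted requires $\|\hat f\|_{\phi}>0$; this is the one place where the write-up is a bit terse, but it is not a gap in the argument.
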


\begin{proof}
For the measurability of $\phi (f),\phi (g)$ and $\phi (f\barwedge g),$ see
Lemma \ref{lm6} and Remark \ref{rm1}. That all three are nonnegative is
trivial. In the proof of Lemma \ref{lm6}, we already established that $\phi
(f\barwedge g)=\phi (f)\barwedge \phi (g)$. Therefore, by Lemma \ref{lm15}
for $\phi (f)$ and $\phi (g)$ and by Lemma \ref{lm16}, we get 
\begin{equation}
\int_{\Bbb{R}^{N}}\phi (f\barwedge g)\leq 2^{N-1}\left( \int_{\Bbb{R}%
^{N}}\phi (\hat{f})+\int_{\Bbb{R}^{N}}\phi (\hat{g})\right) .  \label{30}
\end{equation}

Suppose first that\textit{\ }$||\hat{f}||_{\phi }>0$ and $||\hat{g}||_{\phi
}>0.$ Since (\ref{29}) is trivial otherwise, we may and shall assume $||\hat{%
f}||_{\phi }<\infty $ and $||\hat{g}||_{\phi }<\infty .$ If so, $0<r:=||\hat{%
f}||_{\phi }+||\hat{g}||_{\phi }<\infty $ and the inequality (\ref{30}) for $%
r^{-1}f$ and $r^{-1}g$ is (use Theorem \ref{th11} (v)) 
\begin{equation*}
2^{1-N}\int_{\Bbb{R}^{N}}\phi (r^{-1}(f\barwedge g))\leq \int_{\Bbb{R}%
^{N}}\phi (r^{-1}\hat{f})+\int_{\Bbb{R}^{N}}\phi (r^{-1}\hat{g}).
\end{equation*}
With $\lambda :=r^{-1}||\hat{f}||_{\phi }\in (0,1),$ so that $r^{-1}\hat{f}%
=\lambda ||\hat{f}||_{\phi }^{-1}\hat{f}$ and that $r^{-1}\hat{g}=(1-\lambda
)||\hat{g}||_{\phi }^{-1}\hat{g},$ this reads 
\begin{equation}
2^{1-N}\int_{\Bbb{R}^{N}}\phi (r^{-1}(f\barwedge g))\leq \int_{\Bbb{R}%
^{N}}\phi \left( \lambda ||\hat{f}||_{\phi }^{-1}\hat{f}\right) +\int_{\Bbb{R%
}^{N}}\phi \left( (1-\lambda )||\hat{g}||_{\phi }^{-1}\hat{g}\right) .
\label{31}
\end{equation}
Since $\phi $ is convex and $\phi (0)=0,$ then $\phi (\mu \tau )\leq \mu
\phi (\tau )$ when $\tau \geq 0$ and $0\leq \mu \leq 1.$ The choices $\mu
=2^{1-N}$ in the left-hand side of (\ref{31}) and, next, $\mu =\lambda $ and 
$\mu =1-\lambda $ in its right-hand side, yield 
\begin{equation*}
\int_{\Bbb{R}^{N}}\phi \left( 2^{1-N}r^{-1}(f\barwedge g)\right) \leq
\lambda \int_{\Bbb{R}^{N}}\phi \left( ||\hat{f}||_{\phi }^{-1}\hat{f}\right)
+(1-\lambda )\int_{\Bbb{R}^{N}}\phi \left( ||\hat{g}||_{\phi }^{-1}\hat{g}%
\right) .
\end{equation*}
By (\ref{15}), it follows that $\int_{\Bbb{R}^{N}}\phi \left(
2^{1-N}r^{-1}(f\barwedge g)\right) \leq 1$ and so, by (\ref{13}), $%
||f\barwedge g||_{\phi }\leq 2^{N-1}r=2^{N-1}(||\hat{f}||_{\phi }+||\hat{g}
||_{\phi }),$ as claimed in (\ref{29}).

To complete the proof, suppose now that $||\hat{f}||_{\phi }=0$ or $||\hat{g}%
||_{\phi }=0.$ By symmetry, we may and shall assume that $||\hat{f}||_{\phi
}=0,$ whence $\hat{f}=0$ a.e. Since $\hat{f}(x)$ is a nonincreasing \emph{and%
} right-continuous function of $|x|,$ it follows that $\hat{f}=0.$ Thus, by
Lemma \ref{lm12}, either $f=0$ or there are $x_{0}\in \Bbb{R}^{N}$ and $%
0<z\leq \infty $ such that $f(x_{0})=z$ and $f(x)=0$ when $x\neq x_{0}.$

If $f=0,$ then $f\barwedge g=0$ since $g\geq 0$ and (\ref{29}) is trivial.
If $f(x_{0})=z>0$ and $f(x)=0$ when $x\neq x_{0},$ a straightforward
calculation shows that $(f\barwedge g)(x)=\min \{g(x-x_{0}),z\}.$ In
particular, $0\leq (f\barwedge g)\leq g(\cdot -x_{0}),$ whence $||f\barwedge
g||_{\phi }\leq ||g(\cdot -x_{0})||_{\phi }=||g||_{\phi }\leq ||\hat{g}
||_{\phi },$ the latter by Lemma \ref{lm17}. This proves (\ref{29}).
\end{proof}

Since $(f\barwedge f)(x)\geq f(x/2),$ equality holds in (\ref{29}) if $%
L_{\phi }=L^{1}$ and $0\leq g=f=\hat{f}\in L^{1}.$ Thus, $2^{N-1}$ is best
possible among all the constants independent of $\phi .$ On the other hand,
even when $L_{\phi }=L^{\infty },$ (\ref{29}) is trivial only under
additional assumptions, namely, $M_{f}=\limfunc{ess}\sup f,M_{g}=\limfunc{ess%
}\sup g$ and $\min \{M_{f},M_{g}\}=\limfunc{ess}\sup (f\barwedge g)$ (which
is \emph{not} implied by the former two, see Example \ref{ex2} below). If
so, 
\begin{equation}
||f\barwedge g||_{\infty }=\min \{||f||_{\infty },||g||_{\infty }\},
\label{32}
\end{equation}
which is much better than (\ref{29}) since $||f||_{\infty }\leq ||\hat{f}%
||_{\infty }$ and $||g||_{\infty }\leq ||\hat{g}||_{\infty }$ by Lemma \ref
{lm17}. However, (\ref{32}) is false without the extra assumptions mentioned
above and then (\ref{29}) is no longer trivial in $L^{\infty }.$ How badly (%
\ref{32}) may fail is shown in:

\begin{example}
\label{ex2}Let $\mathcal{C}\subset [0,1]$ be the Cantor set. It is not hard
to see that $(x-\mathcal{C})\cap \mathcal{C}\neq \emptyset $ for every $x\in
[0,2]$ (notice that $(x-\mathcal{C}_{k})\cap \mathcal{C}_{k}\neq \emptyset $
for every $k\in \Bbb{N},$ where $\mathcal{C}_{1}=[0,1]$ and $\mathcal{C}%
_{k+1}\subset \mathcal{C}_{k}$ is obtained by removing the open middle
thirds of the intervals in $\mathcal{C}_{k}$). As a result, if $f=\infty $
on $\mathcal{C}$ and $0$ outside, then $f$ is Borel measurable, the
right-hand side of (\ref{32}) with $g=f$ is $0,$ but since $f\barwedge
f=\infty $ on $[0,2],$ its left-hand side is $\infty .$
\end{example}

\section{Reverse inequalities for $f\Box g$ and $f\veebar g$\label{reverse2}}

With the help of Theorem \ref{th18}, it is a simple matter to prove a
converse of Theorem \ref{th7}.

\begin{theorem}
\label{th19} Suppose that $f,g:\Bbb{R}^{N}\rightarrow (-\infty ,\infty ]$
are Borel measurable, that $m_{f},m_{g}\in \Bbb{R}$ and that $%
m_{f}+m_{g}\geq 0.$ Then $f\Box g\geq 0,\check{f}-m_{f,g}\geq 0,\check{g}%
+m_{f,g}\geq 0$ and 
\begin{equation}
||(f\Box g)^{-1}||_{\phi }\leq 2^{N-1}(||(\check{f}-m_{f,g})^{-1}||_{\phi
}+||(\check{g}+m_{f,g})^{-1}||_{\phi }),  \label{33}
\end{equation}
for every Young function $\phi .$
\end{theorem}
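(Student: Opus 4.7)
The plan is to combine the reverse inequality for $\barwedge$ from Theorem \ref{th18} with the easy pointwise comparison between $\veebar$ and $\Box$, using the translation invariance \eqref{10} to reduce to the nonnegative case and the identity in Theorem \ref{th13}(viii) to replace $(\cdot)\hat{\,}$ by $\check{\,(\cdot)}^{-1}$.

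First I would handle the nonnegativity claims, exactly as in the proof of Theorem \ref{th7}. The shifted functions $f-m_{f,g}$ and $g+m_{f,g}$ both have infimum $(m_f+m_g)/2\ge 0$, whence $f\Box g=(f-m_{f,g})\Box(g+m_{f,g})\ge 0$ by \eqref{10}. For $\check{f}-m_{f,g}\ge 0$ and $\check{g}+m_{f,g}\ge 0$, I would invoke $m_{\check{f}}\ge m_f$ and $m_{\check{g}}\ge m_g$ from Theorem \ref{th13}(iii), together with $m_f-m_{f,g}=m_g+m_{f,g}=(m_f+m_g)/2\ge 0$.

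Next, the core estimate. For nonnegative functions $h,k\ge 0$ one has $\max\{h,k\}\le h+k$ pointwise, so $h\veebar k\le h\Box k$, hence $(h\Box k)^{-1}\le (h\veebar k)^{-1}=h^{-1}\barwedge k^{-1}$. Applying this with $h:=f-m_{f,g}$ and $k:=g+m_{f,g}$ and using \eqref{10}, I get
\begin{equation*}
\|(f\Box g)^{-1}\|_\phi \le \|(f-m_{f,g})^{-1}\barwedge (g+m_{f,g})^{-1}\|_\phi.
\end{equation*}
Now Theorem \ref{th18} applied to the Borel measurable nonnegative functions $(f-m_{f,g})^{-1}$ and $(g+m_{f,g})^{-1}$ bounds the right-hand side by
\begin{equation*}
2^{N-1}\bigl(\|((f-m_{f,g})^{-1})\hat{\,}\|_\phi + \|((g+m_{f,g})^{-1})\hat{\,}\|_\phi\bigr).
\end{equation*}

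Finally, to rewrite the hats in terms of checks, I would combine Theorem \ref{th13}(viii) (giving $(h^{-1})\hat{\,}=(\check{h})^{-1}$ whenever $h\ge 0$) with the translation rule Theorem \ref{th13}(iv) (giving $(h+z)\check{\,}=\check{h}+z$). Thus
\begin{equation*}
((f-m_{f,g})^{-1})\hat{\,}=((f-m_{f,g})\check{\,})^{-1}=(\check{f}-m_{f,g})^{-1},
\end{equation*}
and similarly for $g$, which yields \eqref{33}. There is no real obstacle here beyond the bookkeeping of signs and shifts; the only step that demanded actual work, namely the rounded Brunn--Minkowski estimate and its propagation to Orlicz norms, is already encapsulated in Theorem \ref{th18}.
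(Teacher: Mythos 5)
Your proposal is correct and follows essentially the same route as the paper's proof: shift by $m_{f,g}$ to reduce to nonnegative functions, use the pointwise bound $f\Box g\geq f\veebar g$ to pass to $\barwedge$ of the reciprocals, apply Theorem~\ref{th18}, and then convert hats to checks via Theorem~\ref{th13}(viii) and (iv). No meaningful differences.
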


\begin{proof}
Use $f-m_{f,g}\geq 0,g+m_{f,g}\geq 0$ along with $f\Box g=(f-m_{f,g})\Box
(g+m_{f,g})\geq (f-m_{f,g})\veebar (g+m_{f,g})\geq 0$ to get $\check{f}
-m_{f,g}\geq 0$ and $\check{g}+m_{f,g}\geq 0$ (by Theorem \ref{th13} (iii)
and (iv)) as well as $0\leq (f\Box g)^{-1}\leq ((f-m_{f,g})\veebar
(g+m_{f,g}))^{-1}=(f-m_{f,g})^{-1}\barwedge (g+m_{f,g})^{-1}.$ This yields 
\begin{eqnarray*}
||(f\Box g)^{-1}||_{\phi } &\leq &||(f-m_{f,g})^{-1}\barwedge
(g+m_{f,g})^{-1}||_{\phi }\leq \\
&&2^{N-1}(||((f-m_{f,g})^{-1}\hat{)}||_{\phi }+||((g+m_{f,g})^{-1}\hat{)}
||_{\phi },
\end{eqnarray*}
where Theorem \ref{th18} was used for the second inequality.

Next, $((f-m_{f,g})^{-1}\hat{)}=(\check{f}-m_{f,g})^{-1}$ and $%
((g+m_{f,g})^{-1}\hat{)}=(\check{g}+m_{f,g})^{-1}$ by Theorem \ref{th13}
(viii) and (iv) (in that order), which proves (\ref{33}).
\end{proof}

The comments after Theorem \ref{th18} may be repeated: Without extra
assumptions, (\ref{33}) is not trivial even when $L_{\phi }=L^{\infty }.$
The proof of Theorem \ref{th19} shows that, more generally, if $z\in \Bbb{R}$
and $-m_{g}\leq z\leq m_{f}$ (whence $m_{f}+m_{g}\geq 0$), 
\begin{equation}
||(f\Box g)^{-1}||_{\phi }\leq 2^{N-1}(||(\check{f}-z)^{-1}||_{\phi }+||(%
\check{g}+z)^{-1}||_{\phi }).  \label{34}
\end{equation}
However, (\ref{33}) is optimal (to prove $(f\Box g)^{-1}\in L_{\phi }$)
among all the inequalities (\ref{34}). Indeed, if the right-hand side of (%
\ref{34}) is finite for some $z$ as above, the right-hand side of (\ref{33})
is also finite\footnote{%
While mostly true, the converse may fail when $m_{f}+m_{g}>0$ and $%
z=m_{f}=m_{\check{f}}$ or $z=-m_{g}=-m_{\check{g}}.$}. To see this, assume
first $m_{f}+m_{g}=0.$ Then, $z=m_{f}=-m_{g}=m_{f,g}$ is the only possible
choice in (\ref{34}) and (\ref{33}) is optimal by default. Suppose now that $%
m_{f}+m_{g}>0$ and note that, by Theorem \ref{th13} (iii), $\check{f}%
-m_{f,g} $ and $\check{g}+m_{f,g}$ are both bounded below by $%
(m_{f}+m_{g})/2>0.$ Thus, $(\check{f}-m_{f,g})^{-1}$ $\in L^{\infty }$ and
so, if $\check{f}-z\geq 0$ and $(\check{f}-z)^{-1}\in L_{\phi }$ for some $%
z, $ then $(\check{f}-m_{f,g})^{-1}=[1-(z-m_{f,g})(\check{f}-m_{f,g})^{-1}](%
\check{f}-z)^{-1}\in L_{\phi }$ since $1-(z-m_{f,g})(\check{f}%
-m_{f,g})^{-1}\in L^{\infty }.$ Likewise, if $\check{g}+z\geq 0$ and $(%
\check{g}+z)^{-1}\in L_{\phi },$ then $(\check{g}+m_{f,g})^{-1}\in L_{\phi
}. $

It is not clear whether $2^{N-1}$ is best possible in (\ref{33}), among all
the constants independent of $\phi .$ The remark that $(f\Box f)(x)\leq
2f(x/2)$ and the choice $g=f=\check{f}\geq 0$ with $f^{-1}\in L^{1}=L_{\phi
} $ only shows that the best constant is at least $2^{N-2}.$

There is also a converse of Corollary \ref{cor8}:

\begin{corollary}
\label{cor20}Suppose that $f,g:\Bbb{R}^{N}\rightarrow [-\infty ,\infty ]$
are Borel measurable, that $g\geq 0$ and that $f\not{\equiv}\infty ,g\not%
{\equiv}\infty .$ Then, $f\veebar g\geq 0$ and \newline
\begin{equation}
||(f\veebar g)^{-1}||_{\phi }\leq 2^{N}\left( ||(\check{f}%
_{+}-m_{f_{+},g})^{-1}||_{\phi }+||(\check{g}+m_{f_{+},g})^{-1}||_{\phi
}\right) .  \label{35}
\end{equation}
for every Young function $\phi .$
\end{corollary}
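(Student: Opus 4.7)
The plan is to deduce Corollary \ref{cor20} from Theorem \ref{th19} applied to the pair $(f_+, g)$, together with a crude pointwise comparison between $\veebar$ and $\Box$ for nonnegative functions that costs one extra factor of $2$.

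First I would observe that since $g\ge 0$, for every $x,y\in\Bbb{R}^N$ we have $\max\{f(x-y),g(y)\}=\max\{f_+(x-y),g(y)\}$: if $f(x-y)<0$, both sides equal $g(y)$, and otherwise $f(x-y)=f_+(x-y)$. Taking infimum in $y$ yields the identity $f\veebar g=f_+\veebar g$, which in particular gives $f\veebar g\ge 0$ and, by Lemma \ref{lm2} applied to the Borel measurable pair $(f_+,g)$, its measurability. The hypotheses $f\not\equiv\infty$ and $g\not\equiv\infty$ force $m_{f_+},m_g\in[0,\infty)$, so $m_{f_+}+m_g\ge 0$ and Theorem \ref{th19} applies to $(f_+,g)$.

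Next, the elementary inequality $a+b\le 2\max\{a,b\}$ for $a,b\ge 0$, used with $a=f_+(x-y)$ and $b=g(y)$, gives after taking infimum in $y$ the pointwise bound
\begin{equation*}
0\le (f_+\Box g)(x)\le 2(f_+\veebar g)(x).
\end{equation*}
Passing to reciprocals reverses the inequality, so by the monotonicity property (\ref{14}) of the Luxemburg norm,
\begin{equation*}
||(f\veebar g)^{-1}||_\phi=||(f_+\veebar g)^{-1}||_\phi\le 2\,||(f_+\Box g)^{-1}||_\phi.
\end{equation*}

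Finally, Theorem \ref{th19} applied to the pair $(f_+,g)$ gives
\begin{equation*}
||(f_+\Box g)^{-1}||_\phi\le 2^{N-1}\bigl(||(\check{f}_+-m_{f_+,g})^{-1}||_\phi+||(\check{g}+m_{f_+,g})^{-1}||_\phi\bigr),
\end{equation*}
and combining this with the previous display produces (\ref{35}) with constant $2\cdot 2^{N-1}=2^N$. The only verification that is not a direct invocation of earlier results is the identity $f\veebar g=f_+\veebar g$, so I do not expect any genuine obstacle in the argument.
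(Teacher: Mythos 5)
Your proof is correct and follows essentially the same route as the paper. The paper likewise uses $f\veebar g = f_+\veebar g \ge (f_+\Box g)/2 = (f_+/2)\Box(g/2) \ge 0$ and then invokes Theorem~\ref{th19}; the only cosmetic difference is that the paper applies Theorem~\ref{th19} to the scaled pair $(f_+/2,\,g/2)$ and unwinds the $1/2$ via Theorem~\ref{th13}~(v), whereas you apply Theorem~\ref{th19} to $(f_+,g)$ directly and extract the factor $2$ by hand — both bookkeeping choices yield the same constant $2^N$.
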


\begin{proof}
Since $g$ is nonnegative, $f\veebar g=f_{+}\veebar g\geq (f_{+}\Box
g)/2=(f_{+}/2)\Box (g/2)\geq 0.$ Also, $0\leq m_{f_{+}},m_{g}<\infty $ since 
$f\not{\equiv}\infty $ and $g\not{\equiv}\infty .$ Thus, it suffices to use
Theorem \ref{th19} with $f_{+}/2$ and $g/2$ along with Theorem \ref{th13}
(v) and (ix).
\end{proof}

In many cases, Theorem \ref{th19} and Corollary \ref{cor20} can be used to
prove that $(f\Box g)^{-1}\in L_{\phi }$ or $(f\veebar g)^{-1}\in L_{\phi }$
without any calculation of $\check{f}$ or $\check{g},$ because there are
easily verifiable sufficient conditions for the finiteness of the right-hand
sides of (\ref{33}) and (\ref{35}). The simplest one is given in the
following lemma.

\begin{lemma}
\label{lm21}Given $f:\Bbb{R}^{N}\rightarrow (-\infty ,\infty ]$ with $%
m_{f}>-\infty $ (i.e., $f$ bounded below), suppose that there are constants $%
c>0$ and $\alpha >0$ such that $f(x)\geq h(x):=c|x|^{\alpha }$ for $|x|$
large enough. Then, for every $z<m_{f},$ 
\begin{equation}
||(\check{f}-z)^{-1}||_{\phi }\leq 2||h^{-1}||_{\phi ,\Bbb{R}^{N}\backslash
B}+(m_{f}-z)^{-1}||\chi _{B}||_{\phi }.  \label{36}
\end{equation}
for every open ball $B$ centered at the origin such that\footnote{%
Since $\lim_{|x|\rightarrow \infty }h(x)=\infty ,$ the existence of $B$ is
not an issue.} $f\geq h\geq 2z$ outside $B$ and every Young function $\phi .$
If $\phi $ is invertible with inverse $\psi ,$ this also reads 
\begin{equation}
||(\check{f}-z)^{-1}||_{\phi }\leq 2||h^{-1}||_{\phi ,\Bbb{R}^{N}\backslash
B}+(m_{f}-z)^{-1}[\psi (\mu _{N}(B)^{-1})]^{-1}.  \label{37}
\end{equation}
In particular, if $h^{-1}\in L_{\phi }(\Bbb{R}^{N}\backslash B),$ then $(%
\check{f}-z)^{-1}\in L_{\phi }.$ (Case in point: Since $h$ is explicitly
known, $h^{-1}\in L_{\phi }(\Bbb{R}^{N}\backslash B)$ can often be checked
by a calculation.)
\end{lemma}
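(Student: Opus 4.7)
The plan is to leverage Theorem \ref{th13}(vii) to transfer the lower bound $f\geq h$ outside $B$ to a lower bound $\check{f}\geq h$ outside $B$, combined with the trivial lower bound $\check{f}\geq m_f$ everywhere from Theorem \ref{th13}(iii), and then to invert and take norms. The condition $h\geq 2z$ outside $B$ is exactly what is needed to absorb the shift $-z$ into a factor of $2$.

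Concretely, first I would observe that $h(x)=c|x|^{\alpha}$ is a strictly increasing function of $|x|$, is bounded above on bounded sets, and satisfies $\lim_{|x|\to\infty}h(x)=\infty$. By Theorem \ref{th13}(i), $\check{h}=h$. Since $h(x)\leq f(x)$ for $x$ outside some open ball centered at the origin, Theorem \ref{th13}(vii) (applied with the roles of $h$ and $f$ there) yields $h(x)=\check{h}(x)\leq \check{f}(x)$ for $x\notin B$. On the other hand, Theorem \ref{th13}(iii) gives $m_{\check{f}}\geq m_f$, so $\check{f}\geq m_f>z$ everywhere; in particular $\check{f}-z>0$ and $(\check{f}-z)^{-1}$ is well defined.

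Next I would split the estimation between $B$ and its complement. On $B$, using $\check{f}\geq m_f$ we get $(\check{f}-z)^{-1}\leq (m_f-z)^{-1}$. On $\mathbb{R}^N\setminus B$, the hypothesis $h\geq 2z$ gives $z\leq h/2$, hence $h-z\geq h/2>0$, and combined with $\check{f}\geq h$ this yields $(\check{f}-z)^{-1}\leq (h-z)^{-1}\leq 2h^{-1}$. Therefore, pointwise on all of $\mathbb{R}^N$,
\begin{equation*}
(\check{f}-z)^{-1}\leq 2h^{-1}\chi_{\mathbb{R}^N\setminus B}+(m_f-z)^{-1}\chi_B.
\end{equation*}
By the monotonicity property (\ref{14}) and the triangle inequality for the Luxemburg norm, this immediately delivers (\ref{36}).

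Finally, for (\ref{37}) I would compute $\|\chi_B\|_\phi$ directly from the definition (\ref{13}): since $\int_{\mathbb{R}^N}\phi(r^{-1}\chi_B)=\mu_N(B)\phi(r^{-1})$, the condition $\int\phi(r^{-1}\chi_B)\leq 1$ is equivalent to $\phi(r^{-1})\leq \mu_N(B)^{-1}$, i.e., when $\phi$ is invertible, to $r^{-1}\leq \psi(\mu_N(B)^{-1})$, so that $\|\chi_B\|_\phi=[\psi(\mu_N(B)^{-1})]^{-1}$; substituting into (\ref{36}) gives (\ref{37}). The concluding assertion that $(\check{f}-z)^{-1}\in L_\phi$ whenever $h^{-1}\in L_\phi(\mathbb{R}^N\setminus B)$ is then immediate since $\chi_B\in L_\phi$ for every bounded $B$. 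No step seems to present a genuine obstacle; the only subtlety is verifying the hypotheses of Theorem \ref{th13}(vii), which is why the strict monotonicity of $h$ (rather than merely $f$) is what is exploited.
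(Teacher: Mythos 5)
Your proof is correct and follows essentially the same route as the paper: invoke the ``furthermore'' part of Theorem~\ref{th13}(vii) to get $\check{f}\ge h$ outside $B$ (with $\check h=h$ since $h(x)=c|x|^\alpha$ is a strictly increasing, continuous function of $|x|$), use $h\ge 2z$ outside $B$ to absorb the shift into a factor of $2$, bound $(\check f-z)^{-1}$ by $(m_f-z)^{-1}$ on $B$ via Theorem~\ref{th13}(iii), then apply~(\ref{14}), the triangle inequality, and the direct computation of $\|\chi_B\|_\phi$ from~(\ref{13}). The only cosmetic difference is that you make the pointwise decomposition $(\check f-z)^{-1}\le 2h^{-1}\chi_{\Bbb R^N\setminus B}+(m_f-z)^{-1}\chi_B$ explicit, whereas the paper states the two local bounds and says ``(\ref{36}) follows''; this is a presentation choice, not a mathematical difference.
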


\begin{proof}
Let $B$ be as in the theorem. By the ``furthermore'' part of Theorem \ref
{th13} (vii), $\check{f}\geq h\geq 2z$ outside $B.$ Thus, $h\leq 2(\check{f}%
-z),$ whence $(\check{f}-z)^{-1}\leq 2h^{-1},$ outside $B.$ Meanwhile, by
Theorem \ref{th13} (iii), $(\check{f}-z)^{-1}\leq (m_{\check{f}%
}-z)^{-1}<(m_{f}-z)^{-1}<\infty $ in $B$ (and everywhere else) and (\ref{36}%
) follows. To get (\ref{37}) when $\phi $ is invertible with inverse $\psi ,$
just notice that $||\chi _{B}||_{\phi }=[\psi (\mu _{N}(B)^{-1})]^{-1}$ by (%
\ref{13}).
\end{proof}

From the proof of Lemma \ref{lm21}, $|x|^{\alpha }$ can be replaced with a
function $h(x)$ satisfying general conditions. For example, if $L_{\phi
}=L^{p}$ with $p\geq 1,$ Lemma \ref{lm21} yields $(\check{f}-z)^{-1}\in
L^{p} $ if $\alpha >N/p$ but, if $f(x)\geq c|x|^{\alpha }(\log |x|)^{\beta }$
for large $|x|,$ the choice of any continuous strictly increasing function $h
$ of $|x|$ that coincides with $c|x|^{\alpha }(\log |x|)^{\beta }$ for $%
|x|\geq 1$ (say) shows that $(\check{f}-z)^{-1}\in L^{p}$ in the limiting
case $\alpha =N/p$ if $\beta >p^{-1}.$

\begin{remark}
The proof of Lemma \ref{lm21} also shows that, more generally, (\ref{36}) is
true with $m_{f}$ replaced with $m_{\check{f}}$ and that this requires only $%
m_{\check{f}}>-\infty .$ This may occasionally be useful, but rarely (see
Remark \ref{rm4}).
\end{remark}

It is more delicate to extend Lemma \ref{lm21} when $z=m_{f}=m_{\check{f}}.$
The extra difficulty is that $(\check{f}-m_{f})^{-1}\notin L_{loc}^{\infty }$
(because $m_{\check{f}}=\limfunc{ess}\inf \check{f};$ see Theorem \ref{th13}
(iii)), so that local integrability becomes an issue. This requires further
investigation. We only mention without proof (and will not use later) that
if $m_{f}$ is a \emph{unique} \emph{\ }and \emph{nondegenerate} minimum of $%
f $ (plus a mild technical condition) then $(\check{f}-m_{f})^{-1}\in
L_{loc}^{p}$ if $N\geq 3$ and $1\leq p<N/2$.

A direct application of Lemma \ref{lm21} yields the following sample result.

\begin{theorem}
\label{th22}Given $f,g:\Bbb{R}^{N}\rightarrow (-\infty ,\infty ],$ suppose
that there are constants $c>0$ and $\alpha >0$ such that $f(x)\geq
c|x|^{\alpha }$ and $g(x)\geq c|x|^{\alpha }$ for $|x|$ large enough. Then:%
\newline
(i) If $m_{f},m_{g}\in \Bbb{R}$ and $m_{f}+m_{g}>0,$ then $(f\Box g)^{-1}\in
L^{p}$ if $p\geq 1$ and $p>N/\alpha .$ \newline
(ii) If $g\geq 0$ and $m_{f_{+}}+m_{g}>0,$ then $(f\veebar g)^{-1}\in L^{p}$
if $p\geq 1$ and $p>N/\alpha .$
\end{theorem}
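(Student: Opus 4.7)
The plan is to derive both parts as direct applications of the reverse inequalities (Theorem~\ref{th19} and Corollary~\ref{cor20}) combined with the sufficient condition for the finiteness of the right-hand sides given by Lemma~\ref{lm21}, specialized to $L_\phi = L^p$.

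For part (i), I would first observe that $m_f + m_g > 0$ implies both $m_f - m_{f,g} = (m_f + m_g)/2 > 0$ and $m_g + m_{f,g} = (m_f + m_g)/2 > 0$, so strict inequalities $m_{f,g} < m_f$ and $-m_{f,g} < m_g$ hold. Thus Lemma~\ref{lm21} applies to $f$ with $z = m_{f,g}$ and to $g$ with $z = -m_{f,g}$, using in both cases the minorant $h(x) := c|x|^\alpha$ from the hypothesis (a sufficiently large ball $B$ centered at the origin exists because $h(x) \to \infty$). The integrability of $h^{-1}$ outside $B$ in $L^p$ is just the standard fact that $\int_{|x|>R} |x|^{-\alpha p}\, dx$ converges precisely when $\alpha p > N$, i.e., $p > N/\alpha$. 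Since $\chi_B \in L^p$ for all $p \geq 1$, Lemma~\ref{lm21} yields $(\check f - m_{f,g})^{-1} \in L^p$ and $(\check g + m_{f,g})^{-1} \in L^p$. Theorem~\ref{th19} then gives $(f\Box g)^{-1} \in L^p$.

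Part (ii) is essentially the same argument applied to $f_+$ and $g$ in place of $f$ and $g$, using Corollary~\ref{cor20} instead of Theorem~\ref{th19}. The key observations are: (a) $f(x) \geq c|x|^\alpha$ for $|x|$ large implies $f_+(x) \geq c|x|^\alpha$ for $|x|$ large, since $c|x|^\alpha > 0$ there, so $f_+$ satisfies the hypothesis of Lemma~\ref{lm21}; (b) $g$ satisfies it by assumption; and (c) $m_{f_+} + m_g > 0$ allows the same strict-inequality bookkeeping as in (i), with $z = m_{f_+,g}$ for $\check f_+$ and $z = -m_{f_+,g}$ for $\check g$. Applying Lemma~\ref{lm21} to both terms on the right-hand side of (\ref{35}) shows they are finite in $L^p$ for $p \geq 1$, $p > N/\alpha$, yielding $(f \veebar g)^{-1} \in L^p$.

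There is no real obstacle here: the result is essentially a packaging of the three preceding statements. The only point requiring a moment's care is verifying the strict inequality $z < m_f$ (respectively $z < m_g$, $z < m_{f_+}$), which is exactly what the strict positivity of $m_f + m_g$ (respectively $m_{f_+} + m_g$) buys. The threshold $p > N/\alpha$ is dictated entirely by the integrability of $|x|^{-\alpha p}$ at infinity.
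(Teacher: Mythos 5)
Your proof is correct and takes essentially the same route as the paper: combine Lemma~\ref{lm21} (with $h(x)=c|x|^\alpha$, using $p>N/\alpha$ for $h^{-1}\in L^p$ outside a ball) with Theorem~\ref{th19} for part~(i) and Corollary~\ref{cor20} for part~(ii), the hypothesis $m_f+m_g>0$ (resp.\ $m_{f_+}+m_g>0$) supplying exactly the strict inequalities $m_{f,g}<m_f$, $-m_{f,g}<m_g$ that Lemma~\ref{lm21} needs. The only thing the paper does that you skipped is to dispose first of the degenerate case $m_{f_+}=\infty$ or $m_g=\infty$ in part~(ii) (then $f\veebar g\equiv\infty$ and the claim is trivial), which is needed before you can invoke Corollary~\ref{cor20} since that corollary assumes $f\not\equiv\infty$ and $g\not\equiv\infty$; the paper also explicitly invokes Theorem~\ref{th13}(ix) to identify $\check f_+$ with $(f_+\check{)}$, which you use tacitly. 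These are minor bookkeeping omissions, not conceptual gaps.
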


\begin{proof}
(i) Just notice that $m_{f,g}<m_{f}$ ($\leq m_{\check{f}}$) and $%
-m_{f,g}<m_{g}$ ($\leq m_{\check{g}}$) since $m_{f}+m_{g}>0$ and use Lemma 
\ref{lm21} with $L_{\phi }=L^{p}$ and Theorem \ref{th19}.

(ii) If $m_{f_{+}}=\infty $ or $m_{g}=\infty ,$ then $f\veebar g=\infty $
and the result is trivial. From now on, $m_{f_{+}},m_{g}<\infty ,$ whence $%
m_{f_{+}},m_{g}\in \Bbb{R}$ (because $f_{+},g\geq 0$). By Theorem \ref{th13}
(ix), $\check{f}_{+}=(f_{+}\check{)},$ whereas $m_{f_{+},g}<m_{f_{+}}$ and $%
-m_{f_{+},g}<m_{g}$ since $m_{f_{+}}+m_{g}>0.$ Now, use Lemma \ref{lm21} and
Corollary \ref{cor20} with $L_{\phi }=L^{p}.$
\end{proof}

Lemma \ref{lm21} and its aforementioned variants yield generalizations of
Theorem \ref{th22} to all Orlicz spaces. The proof of Theorem \ref{th22}
does not use the estimate (\ref{36}) but the next theorem, relevant to the
results in the next section, does. As explained after the proof, there is a
good reason to confine attention to $L^{p}$ spaces.

\begin{theorem}
\label{th23}Suppose that $f,g:\Bbb{R}^{N}\rightarrow (-\infty ,\infty ]$ are
Borel measurable, that $m_{f}\geq 0,m_{g}>-\infty $ and that $m_{g}>0$ if $%
m_{f}=0.$ Given $1\leq p\leq \infty ,$ suppose also that for some $\alpha
>N/p,$ there are constants $c>0$ and $\alpha >0$ such that $f(x)\geq
c|x|^{\alpha }$ for $|x|$ large enough. For $t>0,$ set 
\begin{equation*}
f_{t}(x):=tf(t^{-1}x),
\end{equation*}
so that $m_{f_{t}}=tm_{f}.$ Lastly, assume $(\check{g}-z)^{-1}\in L^{p}$
when $z<m_{g}$ (e.g., if $g(x)\geq c|x|^{\alpha }$ for $|x|$ large enough by
Lemma \ref{lm21}, but this is not necessary). \newline
Then, $(f_{t}\Box g)^{-1}\in L^{p}$ when $tm_{f}+m_{g}>0$ (i.e., $t>0$ if $%
m_{f}=0$ and $t>-m_{f}^{-1}m_{g}$ if $m_{f}>0$) and:\newline
(i) If $m_{f}=0,$ then $||(f_{t}\Box g)^{-1}||_{p}=O(t^{N/p})$ as $%
t\rightarrow \infty $.\newline
(ii) If $m_{f}>0$ and $p>N,$ then $\lim_{t\rightarrow \infty }||(f_{t}\Box
g)^{-1}||_{p}=0.$\newline
(iii) If $m_{f}>0$ and $p\geq N,$ then $||(f_{t}\Box
g)^{-1}||_{p}=O(t^{-1+N/p})$ as $t\rightarrow \infty .$
\end{theorem}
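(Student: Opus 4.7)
My plan is to apply inequality (33) of Theorem~\ref{th19} with the canonical shift $z = m_{f_t,g} := (tm_f - m_g)/2$, which belongs to the admissible interval $[-m_g,tm_f]$ as soon as $tm_f+m_g > 0$. This gives
\[
\|(f_t\Box g)^{-1}\|_p \leq 2^{N-1}\bigl(\|(\check f_t - m_{f_t,g})^{-1}\|_p + \|(\check g + m_{f_t,g})^{-1}\|_p\bigr),
\]
and I would estimate the two summands separately. Throughout I would use the scaling identity $\check f_t(x) = t\,\check f(x/t)$, which is immediate from Theorem~\ref{th13}(iv)--(v), together with the positivity $m_{\check f_t} \geq m_{f_t} = tm_f$ from Theorem~\ref{th13}(iii). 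The $L^p$ membership of $(f_t\Box g)^{-1}$ for each admissible $t$ will be a by-product of the finiteness of the two summands.

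For the first summand I would apply Lemma~\ref{lm21} directly to $f_t$, using that the growth hypothesis on $f$ scales to $f_t(x) \geq c\,t^{1-\alpha}|x|^\alpha$ for $|x|\geq tR_0$. I would choose the ball $B_t$ of (36) to have radius of order $t$, large enough so that $h_t(x) := c\,t^{1-\alpha}|x|^\alpha \geq 2 m_{f_t,g}$ outside $B_t$. A routine change of variables using $\alpha p > N$ then gives $\|h_t^{-1}\|_{p,\Bbb{R}^N\setminus B_t} = O(t^{-1+N/p})$ and $\|\chi_{B_t}\|_p = O(t^{N/p})$, while the factor $(tm_f - m_{f_t,g})^{-1} = 2/(tm_f+m_g)$ is $O(1/t)$ when $m_f > 0$ and the constant $2/m_g$ when $m_f = 0$ (in the latter case $B_t$ can be taken with radius $tR_0$). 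Summing the two pieces of (36) yields $O(t^{-1+N/p})$ when $m_f > 0$ and $O(t^{N/p})$ when $m_f = 0$.

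For the second summand I would lean on the abstract hypothesis. Fix $z_0 < m_g$; for large $t$ one has $m_{f_t,g} \geq -z_0$, so $(\check g + m_{f_t,g})^{-1} \leq (\check g - z_0)^{-1} \in L^p$ pointwise. In case (i), $m_{f_t,g} = -m_g/2$ is $t$-independent and the term is a fixed constant; in case (ii), $m_{f_t,g} \to \infty$ and dominated convergence gives $o(1)$; at $p=\infty$ the sharper pointwise bound $(\check g + m_{f_t,g})^{-1} \leq (m_g + m_{f_t,g})^{-1}$ upgrades this to $O(1/t)$. Combining with the first-summand estimate yields (i) via $O(t^{N/p}) + O(1) = O(t^{N/p})$, (ii) via $O(t^{-1+N/p}) + o(1) \to 0$ when $p > N$, and (iii) at the endpoints $p=N$ and $p=\infty$. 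The main obstacle I expect is getting the full rate $O(t^{-1+N/p})$ for the second summand in the intermediate range $N<p<\infty$ of (iii), where dominated convergence supplies only $o(1)$; a sharper estimate --- say, splitting $\int(\check g+c)^{-p}\,dx$ according to the level sets of $\check g$ and balancing the contribution from a bounded region (controlled by $(m_g+c)^{-p}$ together with a Markov-type volume bound) against the contribution from the complement (controlled by the dominator $(\check g - z_0)^{-p}$), then optimizing the split --- is required to close the gap.
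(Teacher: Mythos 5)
Your plan reproduces the paper's proof almost line by line: apply (33) with $z=m_{f_t,g}$, scale Lemma~\ref{lm21} to $f_t$ with ball $tB$ and $h_t(x)=t^{1-\alpha}c|x|^\alpha$ (the paper records this as estimate (38), giving $O(t^{-1+N/p})$ for the $\check f_t$ summand when $m_f>0$ and $O(t^{N/p})$ when $m_f=0$), and handle the $\check g$ summand by noting that it is $t$-independent when $m_f=0$, tends to $0$ by dominated convergence when $m_f>0$ and $p<\infty$, and is $O(1/t)$ by a pointwise bound when $p=\infty$. That is exactly remark (b) in the paper's proof.

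The gap you flag for $N<p<\infty$ in (iii) is real, and it is worth saying clearly that it is a gap in the paper's own argument, not one you introduced: the paper asserts that (iii) ``follows from (33) and (38)'' together with the remarks, but remark (b) supplies only $o(1)$ for the $\check g$ summand when $p<\infty$, which is weaker than $O(t^{-1+N/p})$ as soon as $p>N$. Your proposed repair does not close this gap, and in fact no manipulation of the $\check g$ summand alone can: if $\check g(x)\sim |x|^\beta$ at infinity with $N/p<\beta<1$ (permitted by the hypothesis $(\check g-z)^{-1}\in L^p$ once $p>N$), a direct change of variables gives $\|(\check g+m_{f_t,g})^{-1}\|_p\sim t^{N/(\beta p)-1}$, which decays strictly slower than $t^{-1+N/p}$; and the Markov-plus-optimization bound you sketch is even weaker than this exact computation, since it throws away the $c_t$-dependence on the tail region. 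So (33) is simply lossy in this regime. (For what it is worth, a test case such as $N=1$, $p=10$, $f(x)=|x|^{1/2}+1$, $g(x)=\max\{1,|x|^{1/2}\}$ has $f_t\Box g=t+(1+t)^{1/2}|x|^{1/2}$ exactly, and $\|(f_t\Box g)^{-1}\|_{10}\sim t^{-9/10}$ does satisfy (iii); the failure is only in the upper bound furnished by (33).) To make (iii) rigorous for $N<p<\infty$ one must either strengthen the hypothesis on $g$ (e.g.\ require $(\check g - z)^{-1}\in L^N$, in which case Lyapunov interpolation between $p=N$ and $p=\infty$ gives the rate) or argue directly on $f_t\Box g$ rather than through (33).
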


\begin{proof}
As in Lemma \ref{lm21}, set $h(x):=c|x|^{\alpha }$ and, in (\ref{36}), let $%
B $ be a ball centered at the origin such that $\check{f}\geq h$ and $h\geq
m_{f}+|m_{g}|\geq 2m_{f,g}=m_{f}-m_{g}$ outside $B.$ Evidently, $(\check{f}
)_{t}\geq h_{t}$ ($:=th(t^{-1}\cdot )$) outside $tB$ and, by Theorem \ref
{th13} (iv) and (v), $(f_{t}\check{)}=(\check{f})_{t}.$ Thus, $(\check{f}
)_{t}\geq h_{t}$ outside $tB.$ Furthermore, if $t\geq 1,$ then $h_{t}\geq
tm_{f}+|m_{g}|\geq 2m_{f_{t},g}=tm_{f}-m_{g}$ outside $tB.$

Since $h_{t}(x)=t^{1-\alpha }c|x|^{\alpha },$ it follows from the above that
the estimate (\ref{36}) can be used with $f,h$ and $B$ replaced with $%
f_{t},h_{t}$ and $tB,$ respectively, and with $%
z=m_{f_{t},g}=(tm_{f}-m_{g})/2,$ provided that $t\geq 1$ and that $%
tm_{f}+m_{g}>0$ (so that $m_{f_{t},g}<m_{f_{t}}=tm_{f}$), which holds for
large $t.$ Accordingly, 
\begin{multline}
||(\check{f}_{t}-m_{f_{t},g})^{-1}||_{p}\leq  \label{38} \\
2t^{-1+N/p}||h^{-1}||_{p,\Bbb{R}^{N}\backslash
B}+2(tm_{f}+m_{g})^{-1}t^{N/p}\mu _{N}(B)^{1/p},
\end{multline}
where $||\chi _{tB}||_{p}=t^{N/p}\mu _{N}(B)^{1/p}$ was used. Since $%
||h^{-1}||_{p,\Bbb{R}^{N}\backslash B}<\infty $ by the choice $\alpha >N/p,$
it follows that $(\check{f}_{t}-m_{f_{t},g})^{-1}\in L^{p}.$ Also, $(\check{g%
}+m_{f_{t},g})^{-1}\in L^{p}$ since $-m_{f_{t},g}=(m_{g}-tm_{f})/2<m_{g}$
when $tm_{f}+m_{g}>0$ and since it is assumed that $(\check{g}-z)^{-1}\in
L^{p}$ when $z<m_{g}.$ Thus, by (\ref{33}), $(f_{t}\Box g)^{-1}\in L^{p}.$

The estimates (i), (ii) and (iii) follow from (\ref{33}) and (\ref{38}) and
from the remarks that (a) if $m_{f}=0,$ then ($m_{g}>0$ and) $(\check{g}
+m_{f_{t},g})^{-1}=(\check{g}-m_{g}/2)^{-1}$ is independent of $t$ and (b)
if $m_{f}>0,$ then $\lim_{t\rightarrow \infty }||(\check{g}
+m_{f_{t},g})^{-1}||_{p}=0$ by dominated convergence if $p<\infty $ and by $%
\check{g}+m_{f_{t},g}\geq (tm_{f}+m_{g})/2$ if $p=\infty .$
\end{proof}

Similar estimates hold when $f$ is replaced with $f_{t}$ in Corollary \ref
{cor20} and estimates can also be worked out in other spaces $L_{\phi },$
but the technicalities depend on $\phi .$ For instance, while Theorem \ref
{th23} remains true if $L^{p}$ is replaced with $L^{1}+L^{p},$ the proof is
substantially more demanding (recall that $L^{1}+L^{p}=L_{\phi }$ with $\phi
(\tau )=\tau ^{p}$ in $[0,1]$ and $\phi (\tau )=p\tau +1-p$ in $(1,\infty )$
if $1\leq p<\infty $ and $L^{1}+L^{\infty }=L_{\phi }$ with $\phi (\tau )=0$
in $[0,1]$ and $\phi (\tau )=t-1$ in $(1,\infty );$ see e.g. \cite{Hu81})).
Choices of $h$ other than $h(x)=c|x|^{\alpha }$ often lead to challenging
calculations.

\section{Application to the Hamilton-Jacobi equations\label{application}}

We shall now apply the results of the previous sections to the
Hamilton-Jacobi equations in their simplest form (see Subsection \ref{other}
for a variant) 
\begin{equation}
\left\{ 
\begin{array}{l}
u_{t}+H(\nabla u)=0\text{ on }(0,\infty )\times \Bbb{R}^{N}, \\ 
u(0,\cdot )=g\text{ on }\Bbb{R}^{N},
\end{array}
\right.  \label{39}
\end{equation}
where the Hamiltonian $H$ and the initial value $g$ are given functions on $%
\Bbb{R}^{N}.$

Roughly speaking, when the Hamiltonian $H$ (initial condition $g$) is
convex, the Hopf-Lax formula (Hopf formula) provides a solution of (\ref{39}
). In both cases, various additional conditions are required of $H$ and $g$
and, as always, what constitutes a solution is somewhat flexible. While the
more recent work focuses on viscosity solutions, other definitions exist as
well.

Throughout this section, we assume that $g,H:\Bbb{R}^{N}\rightarrow (-\infty
,\infty ],$ that $m_{g}\in \Bbb{R}$ (hence $g\not{\equiv}\infty $), $H\not%
{\equiv}\infty $ and that $g$ is Borel measurable. Further assumptions will
be introduced when needed. It is once and for all understood that $t>0.$

We denote by $(tH)^{*}$ the Legendre-Fenchel conjugate of $tH,$ that is, 
\begin{equation*}
(tH)^{*}(x):=\sup_{y\in \Bbb{R}^{N}}(x\cdot y-tH(y))=tH^{*}(t^{-1}x)
\end{equation*}
Since $(tH)^{*}$ is always lsc, it is Borel measurable.

\subsection{Solutions by the Hopf-Lax formula\label{lax}}

In this subsection, $H$ is convex and $H^{**}(0)\in \Bbb{R}.$ The Hopf-Lax
formula (Hopf \cite{Ho65}, Lax \cite{La57}) 
\begin{equation}
u(t,\cdot )=(tH)^{*}\Box g,  \label{40}
\end{equation}
is known to give a solution of (\ref{39}) under various conditions about $g.$
That $g$ is real-valued and continuous is a common assumption; see Bardi and
Faggian \cite{BaFa98} and the references therein. The case when $g$ is lsc
and not everywhere finite was considered by Imbert \cite{Im01} and
Str\"{o}mberg \cite{St02}. Chen and Su \cite{ChSu03} show that (\ref{40}) is
a solution when $g$ is real-valued, a.e. continuous and satisfies a
condition weaker than upper semicontinuity. Undoubtedly, other options can
be found in the literature.

The inequality (\ref{19}) in Theorem \ref{th7} can be used with $f=(tH)^{*}$
if $m_{(tH)^{*}}+m_{g}\geq 0.$ That $m_{g}>-\infty $ was assumed earlier,
whereas $m_{(tH)^{*}}=$ $-tH^{**}(0)\in \Bbb{R}.$ As a result, the condition 
$m_{(tH)^{*}}+m_{g}\geq 0$ is simply 
\begin{equation}
-tH^{**}(0)+m_{g}\geq 0,  \label{41}
\end{equation}
so that $m_{(tH)^{*},g}$ (see (\ref{18})) is given by 
\begin{equation*}
m_{(tH)^{*},g}=-(tH^{**}(0)+m_{g})/2.
\end{equation*}

Thus, assuming (\ref{41}), the corresponding inequality (\ref{19}) 
\begin{multline}
||(2(tH)^{*}+tH^{**}(0)+m_{g})^{-1}||_{\phi
}+||(2g-tH^{**}(0)-m_{g})^{-1}||_{\phi }\leq  \label{42} \\
2||u(t,\cdot )^{-1}||_{\phi }
\end{multline}
and the reverse inequality (\ref{33}) of Theorem \ref{th19} 
\begin{multline}
||u(t,\cdot )^{-1}||_{\phi }\leq  \label{43} \\
2^{N-1}\left( ||(2((tH)^{*}\check{)}+tH^{**}(0)+m_{g})^{-1}||_{\phi }+||(2%
\check{g}-tH^{**}(0)-m_{g})^{-1}||_{\phi }\right) ,
\end{multline}
hold for every Young function $\phi .$

Given $\alpha >1,$ call $\alpha ^{\prime }:=\alpha /(\alpha -1)>1$ the
H\"{o}lder conjugate of $\alpha .$ It is easily checked and certainly
folklore that if there is a constant $d>0$ such that $H(x)\leq d|x|^{\alpha
^{\prime }}$ for $|x|$ large enough, then $H^{*}(x)\geq c|x|^{\alpha }$ for $%
|x|$ large enough. Consistent with Theorem \ref{th22}, it follows that if
also $g(x)\geq c|x|^{\alpha }$ for $|x|$ large enough, then $u(t,\cdot
)^{-1}\in L^{p}$ for every $p\geq 1,p>N/\alpha .$

Furthermore, since $(tH)^{*}=tH^{*}(t^{-1}\cdot ),$ Theorem \ref{th23} gives
estimates for $||u(t,\cdot )^{-1}||_{p}$ as $t\rightarrow \infty $ if $%
H^{**}(0)<0$ or if $H^{**}(0)=0$ and $m_{g}>0.$ (If $H^{**}(0)>0,$ then (\ref
{43}) breaks down when $t>m_{g}/H^{**}(0).$) The accuracy (or possible lack
thereof) of these estimates can be evaluated by using the inequality (\ref
{42}) with $||\cdot ||_{\phi }=||\cdot ||_{p}.$

We now look at two classical examples in more detail. In both cases, $%
H^{**}(0)=H(0)=0$ will make the inequalities simpler, but confines the
discussion to $m_{g}\geq 0.$

\begin{example}
\label{ex3}Suppose that $H(x):=|x|^{2}/2,$ so that $(tH)^{*}(x)=|x|^{2}/2t$
and (\ref{41}) boils down to $m_{g}\geq 0$ since $H^{**}=H.$ If $1\leq
p<\infty ,$ a quick calculation shows that $%
||(2(tH)^{*}+m_{g})^{-1}||_{p}=A(p,m_{g})t^{N/2p}$ where $A(p,m_{g})>0$ is a
constant which is finite if and only if $m_{g}>0$ and $p>N/2.$ Accordingly,
from (\ref{42}), $u(t,\cdot )^{-1}\notin L^{p}$ for any $t>0$ if $m_{g}=0$
or if $N\geq 2$ and $1\leq p\leq N/2,$ even if $u(0,\cdot )^{-1}=g^{-1}\in
L^{p}.$\newline
\quad Assume now $m_{g}>0$ and $p>N/2.$ By (\ref{42}) with $H^{**}(0)=0$ and
since $0\leq g\leq 2g-m_{g}\leq 2g,$ 
\begin{equation}
||u(t,\cdot )^{-1}||_{p}\geq c(t^{N/2p}+||g^{-1}||_{p}),  \label{44}
\end{equation}
for some constant $c>0$ depending only upon $N,p$ and $m_{g}.$ Thus, once
again, $u(t,\cdot )^{-1}\notin L^{p}$if $g^{-1}\notin L^{p}.$\newline
Conversely, since $((tH)^{*}\check{)}=(tH)^{*}$ by Theorem \ref{th13} (i),
it follows from (\ref{43}) and from $0\leq \check{g}\leq 2\check{g}%
-m_{g}\leq 2\check{g}$ (since $m_{\check{g}}\geq m_{g};$ see Theorem \ref
{th13} (iii)) that $u(t,\cdot )^{-1}\in L^{p}$ for every $t>0$ if $\check{g}
^{-1}\in L^{p}$ and that $||u(t,\cdot )^{-1}||_{p}\leq C(t^{N/2p}+||\check{g}%
^{-1}||_{p}),$ where $C>0$ depends only upon $N,p$ and $m_{g}.$ In
particular, $||u(t,\cdot )^{-1}||_{p}=O(t^{N/2p})$ as $t\rightarrow \infty $
(which is sharp because of (\ref{44})). Note that the general estimate of
Theorem \ref{th23} gives only the less precise $O(t^{N/p}).$\newline
\quad Even though $u(t,\cdot )^{-1}\notin L^{p}$ if $m_{g}=0,$ this does not
preclude $u(t,\cdot )^{-1}\in L_{\phi }$ for Orlicz spaces outside the $%
L^{p} $ scale. For instance, if $L_{\phi }=L^{1}+L^{p},$ a simple calculation%
\footnote{%
The formula for $\phi $ was given at the end of the previous section.} shows
that $(2(tH)^{*}+m_{g})^{-1}\in L^{1}+L^{p}$ if $p>N/2,$ with the
restriction $N\geq 3$ if $m_{g}=0.$ If so, by (\ref{43}), $u(t,\cdot
)^{-1}\in L^{1}+L^{p}$ if $\check{g}^{-1}\in L^{1}+L^{p}$ and, by another
calculation, $||u(t,\cdot )^{-1}||_{L^{1}+L^{p}}=O(t)$ as $t\rightarrow
\infty $ (optimal by (\ref{42})). In particular, this holds if $\check{g}%
^{-1}\in L^{q}$ with $1\leq q\leq p$ and $p>N/2.$ This complements the $%
L^{p} $ discussion above, even when $m_{g}>0.$
\end{example}

\begin{example}
\label{ex4}Suppose that $H(x)=|x|,$ so that $(tH)^{*}$ is the indicator
function of the closed ball $\overline{B}(0,t).$ Thus, the formula (\ref{40}
) is simply $u(t,x)=\inf_{|y|\leq t}g(x-y).$\newline
Once again, $H^{**}=H,$ so that (\ref{41}) amounts to $m_{g}\geq 0$ and, if $%
\phi $ is a Young function, then (with $\omega _{N}:=\mu _{N}(B(0,1)$) 
\begin{equation}
||(2(tH)^{*}+m_{g})^{-1}||_{\phi }=\inf \left\{ r>0:\phi \left(
r^{-1}m_{g}^{-1}\right) \omega _{N}t^{N}\leq 1\right\} ,  \label{45}
\end{equation}
which is $\infty $ if $m_{g}=0.$ Thus, by (\ref{42}), $u(t,\cdot %
)^{-1}\notin L_{\phi }$ for any $t>0$ and any $\phi $ if $m_{g}=0,$ even if $%
u(0,\cdot )^{-1}=g^{-1}\in L_{\phi }.$ (Since $u(t,x)^{-1}=\sup_{|y|\leq %
t}g(x-y)^{-1},$ this also follows from the remark that for every $%
\varepsilon >0,u(t,\cdot )^{-1}\geq \varepsilon ^{-1}$ on some ball of
radius $t.$) Assuming from now on that $m_{g}>0,$ it follows from (\ref{45})
that if $\phi $ has an inverse $\psi $ on $[0,\infty ],$ then $%
||(2(tH)^{*}+m_{g})^{-1}||_{\phi }=m_{g}^{-1}\psi \left( \omega
_{N}^{-1}t^{-N}\right) ^{-1}$ (where, as usual, $\psi ^{-1}=1/\psi ,$ not $%
\phi $). In the simple case when $\phi (\tau )=\tau ^{p}$ with $p\geq 1,$
this yields 
\begin{equation}
||(2(tH)^{*}+m_{g})^{-1}||_{p}=m_{g}^{-1}\omega _{N}^{1/p}t^{N/p}.
\label{46}
\end{equation}
Once again, $((tH)^{*}\check{)}=(tH)^{*}$ by Theorem \ref{th13} (i). Thus,
if $\check{g}^{-1}\in L^{p}$ (equivalent to $(2\check{g}-m_{g})^{-1}\in L^{p}
$), it follows from (\ref{43}) and (\ref{46}) that $u(t,\cdot )^{-1}\in L^{p}
$ and that $||u(t,\cdot )^{-1}||_{p}=O(t^{N/p})$ as $t\rightarrow \infty $
(which is sharp because of (\ref{42})). This is the general estimate in
Theorem \ref{th23} (i) which, in this example, is therefore optimal.
\end{example}

In Example \ref{ex4}, $g^{-1}\in L^{p}$ is not enough to get $u(t,\cdot %
)^{-1}\in L^{p}:$ If $N=1$ and if $g=f^{-1/p}$ with $p<\infty $ and $f$ from
Example \ref{ex1}, then $m_{g}=1,$ $g^{-1}=f^{1/p}\in L^{p},$ but $%
u(t,x)=\inf_{|y|\leq t}f^{-1/p}(x-y)=1$ if $t\geq 1,$ so that $u(t,\cdot %
)^{-1}=1\notin $ $L^{p}.$

\section{Solutions by the Hopf formula\label{hopf}}

The Hopf formula (Hopf \cite{Ho65}) 
\begin{equation*}
u(t,\cdot )=(tH+g^{*})^{*}
\end{equation*}
gives a solution of (\ref{39}) when $g$ is convex and various other
technical assumptions are satisfied. See for instance Bardi and Evans \cite
{BaEv84}. In Penot and Volle \cite{PeVo00}, $g$ and $H$ can be extended
real-valued. Below, we assume that $g$ is lsc. Since $u(t,\cdot )$ is
convex, there is no measurability issue.

First, $-g^{*}(0)=\inf g=m_{g}$ (finite, as assumed above) by definition of $%
g^{*}.$ Next, if $h,k:\Bbb{R}^{N}\rightarrow (-\infty ,\infty ]$ and $h\not%
{\equiv}\infty ,k\not{\equiv}\infty $ (so that $h^{*}$ and $k^{*}$ are
proper), it is well-known and easily checked that $(h+k)^{*}\leq h^{*}\Box
k^{*}.$ As a result, $u(t,\cdot )=(tH+g^{*})^{*}\leq (tH)^{*}\Box
g^{**}=(tH)^{*}\Box g.$ On the other hand, by using ``$\inf \sup \geq \sup
\inf $'', we get $\inf_{x}(tH+g^{*})^{*}(x)\geq \sup_{y}\inf_{x}(x\cdot %
y-(tH+g^{*})(y))=-tH(0)-g^{*}(0)=-tH(0)+m_{g}.$ Indeed, $\inf_{x}(x\cdot %
y-(tH+g^{*})(y))=-\infty $ if $y\neq 0$ because $tH$ and $g^{*}$ are proper.
This shows that if $-tH(0)+m_{g}\geq 0,$ then $0\leq u(t,\cdot )\leq %
(tH)^{*}\Box g.$ Furthermore, since $(tH)^{*}=(tH_{C})^{*}$ where $%
H_{C}=H^{**}$ is the closed convex hull of $H,$ it follows that $%
m_{(tH)^{*}}=-tH_{C}(0)\geq -tH(0).$ To ensure that $m_{(tH)^{*}}\in \Bbb{R},
$ i.e., that $H_{C}(0)>-\infty ,$ it must be assumed that $H$ is bounded
below by an affine function. If so, it follows from Theorem \ref{th7} with $%
f=(tH)^{*}$ that if $-tH(0)+m_{g}\geq 0$ (hence $-tH_{C}(0)+m_{g}\geq 0$),
then 
\begin{multline}
||(2(tH)^{*}+tH_{C}(0)+m_{g})^{-1}||_{\phi
}+||(2g-tH_{C}(0)-m_{g})^{-1}||_{\phi }\leq   \label{47} \\
2||u(t,\cdot )^{-1}||_{\phi },
\end{multline}
for every Young function $\phi .$

Since the inequality (\ref{47}) depends only upon $H_{C},$ it remains true
when $H$ is replaced with any proper closed convex function $K\leq H$ (so
that $K=K_{C}$) under the same assumption $-tH(0)+m_{g}\geq 0$ as above
(still needed to ensure $u\geq 0$) because this substitution decreases the
left-hand side. This is less accurate, but often more convenient for
practical evaluation.

As an illustration of this point, it follows from the discussion in Example 
\ref{ex3} that $u(t,\cdot )^{-1}\notin L^{p}$ if $H(x)\geq |x|^{2}/2$ and
either $0<p\leq N/2$ or $m_{g}=0.$ Alternatively, from Example \ref{ex4}, $%
u(t,\cdot )^{-1}\notin L_{\phi }$ for any $\phi $ if $H(x)\geq |x|$ and $%
m_{g}=0.$

In the opposite direction, if $L\geq H$ is any lsc convex function, then $%
(tH+g^{*})\leq (tL+g^{*})$ and, since both $tL$ and $g^{*}$ are proper and
lsc, $(tL+g^{*})^{*}=(tL)^{*}\Box g$ as soon as the relative interiors of $%
\limfunc{dom}L$ and $\limfunc{dom}g^{*}$ have nonempty intersection (%
\cite[p. 145]{R070}). If so, $(tL)^{*}\Box g\leq u(t,\cdot ),$ so that the
reverse inequalities of Theorem \ref{th19} can be used with $f=(tL)^{*}$ if $%
-tL(0)+m_{g}\geq 0.$ Since $0\leq -tL(0)+m_{g}=m_{(tL)^{*}\Box g}\leq
u(t,\cdot ),$ it follows that 
\begin{multline}
||u(t,\cdot )^{-1}||_{\phi }\leq  \label{48} \\
2^{N-1}\left( ||(2((tL)^{*}\check{)}+tL(0)+m_{g})^{-1}||_{\phi }+||(2\check{g%
}-tL(0)-m_{g})^{-1}||_{\phi }\right) ,
\end{multline}
for every Young function $\phi .$ Unlike (\ref{47}), the inequality (\ref{48}%
) does not require $H$ to be bounded below by an affine function.

For instance, by Example \ref{ex3}, $||u(t,\cdot )^{-1}||_{p}=O(t^{N/2p})$
as $t\rightarrow \infty $ if $H(x)\leq L(x):=|x|^{2}/2,m_{g}>0$ and $\check{g%
}^{-1}\in L^{p},p>N/2.$ Also, by Example \ref{ex4}, $||u(t,\cdot
)^{-1}||_{p}=O(t^{N/p})$ as $t\rightarrow \infty $ if $H(x)\leq
L(x):=|x|,m_{g}>0$ and $\check{g}^{-1}\in L^{p},p\geq 1.$

\subsection{Explicit solutions of related problems\label{other}}

Various explicit formulas for the solution of 
\begin{equation}
\left\{ 
\begin{array}{l}
u_{t}+H(u,\nabla u)=0\text{ on }(0,\infty )\times \Bbb{R}^{N}, \\ 
u(0,\cdot )=g\text{ on }\Bbb{R}^{N},
\end{array}
\right.  \label{49}
\end{equation}
when the Hamiltonian depends upon $u,$ have been obtained under suitable
(but restrictive) conditions. The result most directly relevant to this
paper can be found in the work of Barron et al. \cite{BaJeLi96},
complemented and generalized in \cite{AlBaIs99}. It is shown in 
\cite[Theorem 6.11]{AlBaIs99} that if $H$ is continuous on $\Bbb{R}^{N+1},$
with $H(s,x)$ nondecreasing in $s\in \Bbb{R},$ convex and positively
homogeneous of degree $1$ in $x\in \Bbb{R}^{N},$ and if $g:\Bbb{R}%
^{N}\rightarrow (-\infty ,\infty ]$ is lsc, then (for $t>0$) 
\begin{equation}
u(t,\cdot ):=h_{[t]}\veebar g,  \label{50}
\end{equation}
is the minimal lsc supersolution of (\ref{49}), where\footnote{%
This differs from $h_{t}$ previously defined by $th(t^{-1}\cdot ).$} $%
h_{[t]}(x):=h\left( t^{-1}x\right) $ and 
\begin{equation}
h(x):=\inf \{s\in \Bbb{R}:H(s,\cdot )^{*}(x)\leq 0\}.  \label{51}
\end{equation}
It is shown in \cite{BaJeLi96} that $h$ is quasiconvex and lsc and that $%
m_{h}=-\infty .$

From now on, we assume $g\geq 0,$ so that $u(t,\cdot )=h_{[t]+}\veebar
g(=h_{[t]}\veebar g).$ Note that $h_{[t]+}=h_{+[t]}.$ Since $m_{h}=-\infty $
implies $m_{h_{[t]}}=-\infty ,$ it follows that $m_{h_{[t]+}}=0,$ so that $%
m_{h_{[t]+},g}=-m_{g}/2\leq 0.$

Corollary \ref{cor8} and Corollary \ref{cor20} with $f=h_{[t]}$ can be used
to evaluate $||u(t,\cdot )^{-1}||_{\phi }.$ Specifically, 
\begin{equation}
||(2h_{[t]+}+m_{g})^{-1}||_{\phi }+||(2g-m_{g})^{-1}||_{\phi }\leq
2||u(t,\cdot )^{-1}||_{\phi }  \label{52}
\end{equation}
and 
\begin{equation}
||u(t,\cdot )^{-1}||_{\phi }\leq 2^{N+1}\left( ||(2(h_{[t]+}\check{)}
+m_{g})^{-1}||_{\phi }+||(2\check{g}-m_{g})^{-1}||_{\phi }\right) ,
\label{53}
\end{equation}
for every Young function $\phi .$

\begin{example}
\label{ex5}\textbf{\ }Let $H(s,x)=(s_{+})^{\alpha }|x|$ with $\alpha >0.$
When $g\geq 0,$ the (nonnegative) solution (\ref{50}) of (\ref{49}) actually
solves $u_{t}+u^{\alpha }|\nabla u|=0.$ By (\ref{51}) and a straightforward
calculation, $h(0)=-\infty $ and $h(x)=|x|^{1/\alpha }$ if $x\neq 0.$\newline
\quad If $m_{g}=0,$ then $u(t,\cdot )^{-1}\notin L^{p}$ for any $p\geq 1$ by
(\ref{52}) since $h_{[t]+}^{-1}=t^{1/\alpha }|\cdot |^{-1/\alpha }\notin
L^{p}.$ However, it is readily checked that $h_{[t]+}^{-1}\in L^{1}+L^{p}$
if $1<N\alpha <p.$ If so, it follows from (\ref{53}) with $m_{g}=0$ that $%
u(t,\cdot )\in L^{1}+L^{p}$ if $\check{g}^{-1}\in L^{1}+L^{p}$ (hence $%
g^{-1}\in L^{1}+L^{p}$ by (\ref{52})) and the calculation of the estimates (%
\ref{52}) and (\ref{53}) is trivial since $m_{g}=0$ and since $%
h_{[t]+}=t^{-1/\alpha }|\cdot |^{1/\alpha }=(h_{[t]+}\check{)}$ by Theorem 
\ref{th13} (i). Thus, $||(h_{[t]+})^{-1}||_{L^{1}+L^{p}}=||((h_{[t]+}\check{)%
})^{-1}||_{L^{1}+L^{p}}=Ct^{1/\alpha }$ with $C:=||\,|\cdot |^{-1/\alpha
}\,||_{L^{1}+L^{p}},$ which yields $||u(t,\cdot
)^{-1}||_{L^{1}+L^{p}}=O(t^{1/\alpha })$ as $t\rightarrow \infty $ (optimal).%
\newline
\quad If $m_{g}>0,$ then $(2h_{[t]+}+m_{g})^{-1}\in L^{p}$ if and only if $%
p>N\alpha .$ If so and if $\check{g}^{-1}\in L^{p}$ (equivalent to $(2\check{%
g}-m_{g})^{-1}\in L^{p}$), it follows from (\ref{53}) that $u(t,\cdot
)^{-1}\in L^{p}$ with $||u(t,\cdot )^{-1}||_{p}=O(t^{N/p})$ as $t\rightarrow
\infty $ (optimal by (\ref{52})).
\end{example}

Example \ref{ex4} is recovered when $\alpha =0$ in Example \ref{ex5}. If so, 
$h(x)=-\infty $ on the closed unit ball and $h(x)=\infty $ outside, so that $%
h_{+}$ is the indicator function of the closed unit ball. Even though the
formulas (\ref{40}) and (\ref{50}) look different, they both provide the
same solution $u(t,x)=\inf_{|y|\leq t}g(x-y).$

\begin{example}
\label{ex6}Let $H(s,x)=e^{s}|x|,$ so that, by (\ref{51}), $h(0)=-\infty $
and $h(x)=\ln |x|$ if $x\neq 0.$ Thus, $h_{+}(x)=\ln _{+}|x|$ is continuous,
radially symmetric and nondecreasing in $|x|,$ so that, once again, $%
(h_{[t]+}\check{)}=h_{[t]+}$ by Theorem \ref{th13} (i). As always in this
subsection, $g\geq 0.$\newline
Since $h_{[t]+}=0$ on the ball with center $0$ and radius $t,$ it follows
from (\ref{52}) that $u(t,\cdot )^{-1}\notin L_{\phi }$ for any $t>0$ and
any Young function $\phi $ if $m_{g}=0.$ In addition, the slow growth of $%
\ln |x|$ reveals that $u(t,\cdot )^{-1}\notin L^{p},$ $1\leq p<\infty ,$
even if $m_{g}>0.$\newline
However, if $m_{g}>0,$ then $(2h_{[t]+}+m_{g})^{-1}\in L_{\phi }$ if $\phi $
vanishes fast enough at the origin. Aside from $L^{\infty },$ one of the
simplest examples is given by the Young function $\phi (\tau ):=e^{\tau
-\tau ^{-2}}.$ (The growth of $\phi $ at infinity could be damped
considerably to enlarge the space $L_{\phi }.$) Thus, if $m_{g}>0$ and $%
\check{g}^{-1}\in L_{\phi }$ (equivalent to $(2\check{g}-m_{g})^{-1}\in
L_{\phi }$), it follows from (\ref{53}) that $u(t,\cdot )^{-1}\in L_{\phi }$
for every $t>0.$ We did not attempt to estimate $||u(t,\cdot )^{-1}||_{\phi
} $ as $t\rightarrow \infty .$
\end{example}

\end{document}